\renewcommand{\le}{\leqslant}
\renewcommand{\ge}{\geqslant}
\renewcommand{\epsilon}{\varepsilon}
\renewcommand{\phi}{\varphi}
\newcommand{\RR}{\mathbb{R}}
\newcommand{\ZZ}{\mathbb{Z}}
\newcommand{\QQ}{\mathbb{Q}}
\newcommand{\NN}{\mathbb{N}}
\newcommand{\FF}{\mathbb{F}}
\newcommand{\CC}{\mathbb{C}}
\newcommand{\PP}{\mathbb{P}}
\renewcommand{\AA}{\mathbb{A}}
\newtheorem{lemma}{Lemma}
\newtheorem{theorem}{Theorem}
\newtheorem{proposition}{Proposition}
\newtheorem{conja}{Conjecture A}
\newtheorem{conjb}{Conjecture B}
\newtheorem{conjas}{Conjecture A*}
\newenvironment{proof}{\textsc{Proof. }}{\ \newline\hspace*{\fill}$\boxtimes$}
\newcommand{\bigk}{\mathop{\mathbf{K}}}
\begin{document}

\title{Continued fractions of cubic Laurent series}

\author{
 Dzmitry Badziahin
}

\maketitle

\begin{abstract}
    We construct continued fraction expansions for several families
    of the Laurent series in $\QQ[[t^{-1}]]$. To the best of the
    author's knowledge, this is the first result of this kind since
    Gauss derived the continued fraction expansion for $(1+t)^r$, $r\in\QQ$ in
    1813. As an application, we apply an
    analogue of the hypergeometric method to one of those families
    and derive non-trivial efficient lower bounds on the distance $|x -
    \frac{p}{q}|$ between one of the real roots of $3x^3 - 3tx^2-3ax+at$,
    $a,t\in\ZZ$ and any rational number, under relatively mild
    conditions on the parameters
    $a$ and $t$. We also show that every real cubic irrational $x$
    admits a (generalised) continued fraction expansion in a closed
    form that can be explicitly computed. Finally, we provide an
    infinite family of cubic irrationals $x$ that have arbitrarily (but
    finitely) many better-than-expected rational approximations.
    That is, for any $\tau< 3+\frac{15\ln 2}{24}\approx 3.4332...$
    there exists $c>0$ such that
    the inequality $||qx|| < (H(x)^{\tau}
    qe^{c\sqrt{\ln q}})^{-1}$ has many solutions in integer $q$.
\end{abstract}

{\footnotesize{{\em Keywords}: cubic irrationals, continued
fractions, continued fractions of Laurent series, effective rational
approximations of algebraic numbers

Math Subject Classification 2020: 11J68, 11J70}}

\section{Introduction}

Let $\xi\in\RR$ be an algebraic number of degree $d>1$. The
classical theorem of Liouville states that there exists a constant
$c>0$ such that
$$
\left|\xi-\frac{p}{q}\right| >\frac{c}{q^d}
$$
for all rational numbers $p/q$. Moreover, the constant $c=c(\xi)$
can be explicitly computed. Liouville used that result to construct
the first explicit examples of transcendental numbers. Later, in a
series of papers by Thue, Siegel, Dyson, Gelfond and finally
Roth~\cite{roth_1955}, the power of $q$ in Liouville's theorem was
reduced to $2+\epsilon$ for any $\epsilon>0$. This result is sharp
because for $\epsilon=0$, according to the classical Dirichlet
theorem, the opposite inequality holds for infinitely many integers
$p,q$.

While the Roth theorem is very powerful, one of its weaknesses is
that it is ineffective: for any $d\ge 3$ and $\tau>0$ it does not
allow to construct the constants $c>0$ and $q_0$ such that the
inequality
$$
\left|\xi-\frac{p}{q}\right| >\frac{c}{q^{d-\tau}}
$$
holds for all $p/q\in\QQ$ with $q>q_0$.

Effective analogues of the Roth theorem were studied by many number
theorists starting from the 1960s. However, they are still far from
being optimal. One of the approaches is based on Feldman's
refinement of the theory of linear forms in
logarithms~\cite{feldman_1968}. Its advantage is that it improves
the estimate of Liouville for all algebraic numbers. However, this
improvement is usually extremely tiny. For state-of-the-art results
regarding this approach, we refer to the book of
Bugeaud~\cite{bugeaud_2018}.

Another approach was introduced by Bombieri in
1982~\cite{bombieri_1982}. He managed to make the technique of
Dyson~\cite{dyson_1947} from 1947 effective for an infinite class of
number fields of large degree. Later, this technique was further
developed by Bombieri himself, Mueller, Vaaler and Van~der~Poorten
among others. For example, in~\cite{bpv_1996} the authors considered
cubic extensions of number fields. In particular, they show that for
all $\epsilon>0$, the parameters $a,b\in \ZZ$ with $|a|>e^{1000}$
and $|a|> C |b|^{2+\epsilon}$, and for one of the roots $\xi$ of the
cubic equation $x^3 + ax + b=0$, the inequality
$$
\left|\xi - \frac{p}{q}\right| <
q^{-\frac{2\log(|a|^3)}{\log(|a|^3/b^2)} -
\frac{14}{(3\log|a|)^{1/3}}- \epsilon}
$$
has finitely many solutions. Moreover, all of them can be
effectively found. However, huge bounds on the coefficient $a$,
denominator $q$ and the implied constants make this result hardly
applicable in practice. We end this paragraph with the work of
Wakabayashi~\cite{wakabayashi_2002} who provided better effective
bounds on rational approximations for many cubic algebraics,
compared to~\cite{bpv_1996}.

%There are essentially two different approaches. The first one is
%based on the Feldman's refinement of the theory of linear forms in
%logarithms~\cite{feldman_1968}. For state of the arts results
%regarding this approach, we refer to the book of
%Bugeaud~\cite{bugeaud_2018}.

Now we will dwell on the third approach which historically appeared
first and is usually called a hypergeometric method. It was
introduced in 1964 in the work of A.~Baker~\cite{baker_1964} and was
later improved by Chudnovsky~\cite{chudnovsky_1983}. This method
provides much better lower bounds for the distance $|\xi - p/q|$
compared to the previous methods, but only for algebraic numbers of
certain specific forms. The most thoroughly studied numbers are
$\big(1+\frac{a}{n}\big)^r$ where $r\in\QQ$ and $a,n\in \ZZ$ with
$|a|$ considerably smaller than $n$. The most recent results about
these functions are perhaps due to Bennett~\cite{bennett_1997} and
Voutier~\cite{voutier_2007}. For example, Voutier showed that
$$
\left|\sqrt[3]{2}-\frac{p}{q}\right| > \frac{1}{4q^{2.4325}}
$$
for all integer $p$ and $q$. In the last three decades, the
hypergeometric method helped to achieve similar results about some
other families of algebraic numbers, see for example~\cite{lpv_1999,
voutier_2010, wakabayashi_2002}.

%The last development of this method that we are aware of is due to
%Bennett in 1997~\cite{bennett_1997}.

%It is also worth mentioning the work of
%Bombieri~\cite{bombieri_1982} that stays separate from the two
%approaches above and improves the Liouville's theorem for algebraic
%numbers $\xi$ of certain specific forms and of relatively high
%degrees.

The core idea of the hypergeometric method is to consider Pad\'e
approximants of an algebraic Taylor series $x(z)\in \QQ[[z]]$ and
show that their specialisations at $z = \frac{a}{n}$ provide a
family of good rational approximations to the algebraic number
$x(a/n)$. One of the ways to do that for $(1+z)^r$ is to describe
those approximants as the convergents of the following continued
fraction which was already known by Gauss:
$$
(1+z)^r = \frac{1}{\displaystyle 1 + \frac{-rz}{\displaystyle 1+
\frac{(r+1)z}{2 + \frac{(1-r)z}{2 + \cdots}}}}
$$
Here its $(2k+1)$'st partial quotient is $\frac{(k-r)z}{2k+1}$ and
its $2k$'th partial quotient is $\frac{k(r+k)z}{2k}$. To the best of
the author's knowledge and to their surprise, it seems that no
continued fraction expansions are known for any other algebraic
series in $\QQ[[z]]$ of degree at least 3, since the work of Gauss
in 1813.

In this paper, instead of $\QQ[[z]]$, we consider the space
$\QQ[[t^{-1}]]$ of the Laurent series. It admits a very similar
theory of continued fractions to that in the classical case of real
numbers. We make a brief introduction to it in
Section~\ref{ssec_1_2} and refer to~\cite{poorten_1998} for more
details.

Below we present the list of several families of cubic algebraic
Laurent series that admit the continued fraction expansions in
closed form. There is no reason to believe that this list is
exhaustive. For each item there, $x=x(t)$ is the unique Laurent
series in $\QQ[[t^{-1}]]$ that solves the corresponding cubic
equation and satisfies $\deg(x)>0$. The existence and uniqueness of
such $x$ is justified in Section~\ref{sec1} (Lemma~\ref{lem17}). The
partial quotients in continued fractions are overlined to indicate
that the rule for them is periodic, while the partial quotients
themselves are not. The term $k$ in the notation indicates the
periodic template's number where we start counting from $k=0$.

{\bf 1.} For $3x^3 - 3tx^2 - 9x + t=0$ the continued fraction of $x$
is
$$
\bigk\left[ \begin{array}{l}  \\ t \end{array} \overline{\begin{array}{l}(3k+2)(3k+4)\\
(2k+3)t
\end{array}\hspace{-2ex}}\;\; \right]
$$\medskip

{\bf 2.} For $3x^3 - 3tx^2 + 9x - t=0$ the continued fraction of $x$
is
$$
\bigk\left[ \begin{array}{l}  \\ t \end{array} \overline{\begin{array}{l}(3k+2)(3k+4)\\
(-1)^{k+1}(2k+3)t
\end{array}\hspace{-1ex}}\;\; \right]
$$\medskip

{\bf 3.} For $x^3 - tx^2 - at=0$ where $a\in\QQ$ is a parameter, the
continued fraction of $x$ is {\small
$$
\bigk\left[ \begin{array}{l}  \\ t \end{array} \overline{\begin{array}{llll}3(12k+1)(3k+1)a&3(12k+5)(3k+2)a&3(12k+7)(6k+5)a& 3(12k+11)(6k+7)a\\
(8k+3)t&(8k+5)t&2(8k+7)t&(8k+9)t
\end{array}\hspace{-1ex}}\;\; \right]
$$}
%Here $k = \big[\frac{i}{4}\big]$.\bigskip

{\bf 4.} For $x^3 - tx^2 - a=0$ where $a\in\QQ$ is a parameter, the
continued fraction of $x$ is {\footnotesize
\begin{equation}\label{no4}
\bigk\left[ \begin{array}{l}  \\ \!\!t \end{array} \overline{\begin{array}{llll}3(12k+1)(3k+1)a&3(12k+5)(3k+2)a&3(12k+7)(6k+5)a& 3(12k+11)(6k+7)a\\
(8k+3)t^2&(8k+5)t&2(8k+7)t^2&(8k+9)t\!\!
\end{array}\hspace{-1.5ex}}\;\; \right]
\end{equation}} %Here $k = \big[\frac{i}{4}\big]$.\bigskip

{\bf 5.} For $3x^3 - 3tx^2 - 3ax + at=0$ where $a\in\QQ$ is a
parameter, the continued fraction of $x$ is
\begin{equation}\label{no5}
\bigk\left[ \begin{array}{l}  \\ t \end{array} \overline{\begin{array}{llll}2(3k+1)a&(6k+1)a&2(3k+2)a^2& (6k+5)a^2\\
3(4k+1)t&t&3(4k+3)t(t^2+2a)&t
\end{array}\hspace{-1ex}}\;\; \right]
\end{equation}
%Here $k = \big[\frac{i}{4}\big]$.\bigskip

{\bf 6.} For $x^3 + (t-2)x^2 - 2(t-2)x + 2(t-2)=0$ the continued
fraction of $x$ is {\small
$$
\bigk\left[ \begin{array}{l}  \\ -t \end{array} \overline{\begin{array}{lll}2(6k+1)(3k+1)&6(4k+1)(3k+2)&3(4k+5)(6k+5)\\
(-1)^{k}((4k+1)t +
2k)&(-1)^{k+1}(4k+3)(t^2+3t-1)&(-1)^{k}((4k+5)t+2k+3)
\end{array}\hspace{-1ex}}\;\; \right]
$$}
%Here $k = \big[\frac{i}{3}\big]$.\bigskip

{\bf Remark 1.} In the above list, we provide one representative
from the infinite equivalence class of continued fractions. Indeed,
given one continued fraction of some algebraic series, we can
construct continued fractions for many other series by simply
appending or removing partial quotients at the front of the
continued fraction or by replacing $t$ with a polynomial $T\in
\QQ[t]$. We provide a rigorous definition of the equivalence
relation in Section~\ref{ssec_1_2}.

{\bf Remark 2.} The second equation transforms to the first one if
one makes the change of variables $x \mapsto ix$, $t\mapsto it$
where $i$ is the imaginary unit. Hence the first two series will
become equivalent under the extended equivalence relation. On the
other hand, if we specialise both series $x$ for $t\in\QQ$, we
obtain essentially different continued fractions of algebraic
numbers. The situation with series \textnumero 3 and 4 is somewhat
similar. If instead of the constant parameter $a\in\QQ$
in~\textnumero 4 one uses the linear function $at$ and then makes
cancelations of $t$ from numerators and denominators of the
resulting continued fraction, they come up with the continued
fraction~\textnumero 3.

{\bf Remark 3.} One can check that the series $x = (1+t^{-1})^{1/3}$
from the hypergeometric approach described above is equivalent to
the series \textnumero~2. Indeed, they transform one to another by
the change of the variables $t\mapsto \frac{-t-3}{6}$ and $x\mapsto
\frac{1+x}{1-x}$.
%An enthusiastic reader can verify that the convergents of
%the continued fraction~\textnumero 2, correspond to the Pad\'e approximants
%of $(1+t)^{1/3}$ from~\cite{chudnovsky_1983}.

In Section~\ref{sec1} we establish the continued fractions for
algebraic series~\textnumero 1 and 5. Given Remark~2, that
automatically establishes the continued fraction for
series~\textnumero 2 as well. The proofs for the other continued
fractions are very similar. Therefore, we only provide the necessary
data for them in Appendix~\ref{app1}. An interested reader can
substitute that data into the proofs from Subsection~\ref{subsec32}
and verify the remaining continued fractions.

The above continued fractions shed the light on rational
approximations to cubic irrationals. First of all, in
Section~\ref{sec5} we show that an analogue of the hypergeometric
method applies to the series~\textnumero 5 and gives a substantial
effective improvement of Liouville's estimate on $\big|\xi -
\frac{p}{q}\big|$ for the solutions of the equation
$3x^3-3tx^2-3ax+at = 0$ under certain relatively mild conditions on
integers $a$ and $t$. %To the best of the author's knowledge, that is
%the first result of this type which covers cubic irrationals of the
%form different from $\big(1+\frac{a}{n}\big)^r$.
For convenience, we use the notation $||\xi||$ which means the
distance from $\xi$ to the nearest integer.

\begin{theorem}\label{th3}
Let $a,t$ be positive integers such that $t^2\ge 9a$ and $\xi$ be
the unique real solution of the equation $3x^3 - 3tx^2-3ax+at=0$
that satisfy $\xi>\frac{t}{2}$. Define
$$
\tau_1 := \frac{4(3t^2+a)}{\sqrt{\pi}},\quad \tau_2
:=\frac{105\sqrt{3}e^2a^4}{2\sqrt{\pi}t(t^2+2a)(3t^2+a)},
$$
$$
c_2
 = \frac{144(t^2+a)^3}{c^2_1e^2},\quad c_3
 := \frac{c^2_1e^2t^4(t^2+2a)^2}{9a^6(t^2+a)}\quad \mbox{and}
$$
$$
c_1:=\frac{1}{\sqrt{3}e} \cdot \exp\left(-\sum_{p\in\PP\atop p\ge 5}
\frac{\ln p}{p(p-1)}\right) \approx 0.16948.
$$
Assume that $c_3
>e$. Then for all $q\ge \frac{c_3
}{2\tau_2 }$ one has
\begin{equation}\label{eq2_prop4}
||q\xi|| > \frac{(\log c_3 )^{1/2}}{6\tau_1 c^2_2 (2\tau_2
)^\frac{\log c_2 }{\log c_3 }} \cdot q^{-\frac{\log c_2 }{\log c_3
}} \cdot \log (2\tau_2 q)^{-\frac{\log c_2 }{\log c_3 }-\frac12}.
\end{equation}
\end{theorem}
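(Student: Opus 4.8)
The plan is to apply an analogue of the hypergeometric method to the continued fraction \eqref{no5}. Write $P_n/Q_n$ for its convergents, so that $P_n,Q_n\in\ZZ[a,t]$ satisfy the three-term recurrences $P_n=a_nP_{n-1}+b_nP_{n-2}$, $Q_n=a_nQ_{n-1}+b_nQ_{n-2}$ with the partial quotients $a_n$ and partial numerators $b_n$ read off \eqref{no5}. Fixing a positive integer $t$ with $t^2\ge 9a$, the first and main step is to produce, from the specialised convergents $P_n(t)/Q_n(t)$, a family of good rational approximations to the real number $x$, together with explicit estimates
\[
|Q_n(t)|\le \tau_3\, c_6^{\,k},\qquad |Q_n(t)x-P_n(t)|\le \tau_4\, c_7^{-k},
\]
valid up to non-geometric correction factors, where $n$ runs over one fixed residue class modulo $4$ and $k=[n/4]$. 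The correction factors are the Stirling terms, i.e.\ non-geometric $(k/e)^{2k}$-type contributions together with $k^{\pm 1/2}$ prefactors; it is precisely these that will be responsible for the $\log(2\tau_4 q)$-powers in \eqref{eq2_prop4}. The numbers $P_n(t),Q_n(t)$ should be taken after cancelling their largest common integer factor, and the constant $c_1$ — hence the sum $\sum_{p\ge 5}\tfrac{\ln p}{p(p-1)}$ — records the size of that factor, obtained from a Chudnovsky--Bennett type prime estimate for the content of the polynomials $P_n,Q_n$.

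The estimates follow from a direct study of the recurrence. Since every $b_n$ has $t$-degree $0$ while the $a_n$ over one period contribute total $t$-degree $6$, one gets $\deg_t Q_{4k}=6k$; and from the identity $P_{n+1}Q_n-P_nQ_{n+1}=(-1)^{n+1}b_1\cdots b_{n+1}$ together with the telescoping $x-P_n/Q_n=\sum_{m\ge n}(-1)^{m+1}b_1\cdots b_{m+1}/(Q_mQ_{m+1})$, it follows that $Q_n(t)x-P_n(t)$ vanishes at $t=\infty$ to order exactly $\deg Q_{n+1}$, with leading Laurent coefficient $(-1)^{n+1}b_1\cdots b_{n+1}/\mathrm{lc}(Q_{n+1})$. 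Substituting the explicit data of \eqref{no5}, the leading coefficient of $Q_{4k}$ is $\prod_{j<k}9(4j+1)(4j+3)=9^k(4k)!/(4^k(2k)!)$ while $b_1\cdots b_{4k}=4^ka^{6k}\prod_{j<k}(3j+1)(3j+2)(6j+1)(6j-1)$; Stirling's formula turns these products into the constants $144$, $\sqrt\pi$, $e^2$ and $105\sqrt3$ occurring in $c_6,c_7,\tau_3,\tau_4$. To pass from the leading Laurent term to a genuine bound for the real number $|Q_n(t)x-P_n(t)|$ one bounds the Laurent tail: the ratios of consecutive terms of the telescoping series are $O(a/t^2)$, hence bounded by a constant $<1$ once $t^2\ge 9a$, so the sum is dominated by its first term. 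The condition $x>\tfrac t2$ is used only to single out the correct real root (the one to which the Laurent series converges).

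With the two families of bounds available, \eqref{eq2_prop4} follows from the standard gap argument. Given $p/q$ with $q\ge c_7/(2\tau_4)$, choose $n$ minimal in the fixed residue class with $|Q_n(t)x-P_n(t)|\le 1/(2q)$; the error bound then forces $k\approx \log(2\tau_4 q)/\log c_7$. Taking $p$ nearest to $qx$, so that $|x-p/q|=\|qx\|/q$, and assuming first $p/q\ne P_n(t)/Q_n(t)$, the inequality $|pQ_n(t)-qP_n(t)|\ge 1$ yields
\[
\frac1{q|Q_n(t)|}\le\Big|\frac{P_n(t)}{Q_n(t)}-\frac pq\Big|\le\Big|x-\frac{P_n(t)}{Q_n(t)}\Big|+\frac{\|qx\|}{q}\le\frac1{2q|Q_n(t)|}+\frac{\|qx\|}{q},
\]
whence $\|qx\|\ge 1/(2|Q_n(t)|)$; if instead $p/q=P_n(t)/Q_n(t)$ one repeats the step with the next index in the class, using that the errors are still decreasing there, to obtain $\|qx\|\ge 1/(2|Q_{n+1}(t)|)$. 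The final point is that on substituting $k\approx\log(2\tau_4 q)/\log c_7$ into the bound for $|Q_{n+1}(t)|$, the non-geometric $(k/e)^{2k}$-factors collapse algebraically against the relation between $q$ and $k$, leaving a clean power $q^{-\log c_6/\log c_7}$ up to a fixed power of $\log(2\tau_4 q)$ and the constant $(\log c_7)^{1/2}$ coming from the $k^{1/2}$ prefactors — precisely the shape of \eqref{eq2_prop4}.

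I expect the bulk of the difficulty to lie in the second paragraph: extracting the explicit constants — with the exact $\sqrt\pi$, $e^2$, $105\sqrt3$ — from the Stirling asymptotics of the coefficient products, and, above all, establishing the prime estimate that produces $c_1$ and the sum $\sum_{p\ge 5}\tfrac{\ln p}{p(p-1)}$. This cancellation, together with the subsequent algebraic collapse of the factorial factors in the gap argument, is the whole reason the effective exponent $\log c_6/\log c_7$ can be pushed below $3$, and keeping honest track of the non-geometric factors throughout is what forces the extra $\log$-powers in \eqref{eq2_prop4}.
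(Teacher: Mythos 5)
Your proposal follows essentially the same route as the paper's proof of Theorem~\ref{th3}: specialise the convergents $p_n/q_n$ of the continued fraction~\eqref{no5}, control their size and approximation error via Stirling estimates on the products of partial quotients and numerators (this is where $144$, $e^2$, $\sqrt{\pi}$, $105\sqrt3$ emerge), divide out a common integer factor of $(p_{4k+2},q_{4k+2})$ whose content is bounded below by $\prod_{p\in\PP,\,p\ge5}p^{\lfloor 2k/p\rfloor}$ (this yields $c_1$ and the sum $\sum_{p\ge5}\frac{\ln p}{p(p-1)}$, and is Lemma~\ref{lem5} in the paper), and close with the classical determinant gap argument, choosing the index by comparing the error bound with $1/(2q)$.

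One point in your account should be corrected, though it does not alter the overall plan. After the gcd cancellation the bounds on $q_k^*$ and on $|q_k^*x-p_k^*|$ carry only a $\sqrt{k}$ prefactor, not any residual $(k/e)^{2k}$ Stirling factor: compare $q_{4k+2}\ll (3t^2+a)\,k\,c_2^k\,k^{2k}$ with $\gcd(p_{4k+2},q_{4k+2})\gg\sqrt{\pi k}\,(c_1k)^{2k}$ and observe that the $k^{2k}$ factors cancel exactly. This cancellation is not cosmetic. Without it, $q_{4k+2}\cdot|x-p_{4k+2}/q_{4k+2}|\sim(c_2c_4)^k k^{2k}$ eventually grows, so the raw convergents are not good rational approximants at all, and there is no ``algebraic collapse against the relation between $q$ and $k$'' — that relation would then be $\log q \sim k\log c_7 + 2k\log(k/e)$, which does not invert to a clean exponent $\log c_6/\log c_7$. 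The two $\log(2\tau_4 q)$-type factors in the final inequality~\eqref{eq2_prop4} actually arise from (i) the surviving $\sqrt{k}$ prefactor, which gives $\sqrt{\log}$, and (ii) the $\log\log$ correction needed when inverting $\tau_4\sqrt{k-1}\,c_7^{-(k-1)}\ge 1/(2q)$ to bound $k$ in terms of $q$ (this is~\eqref{eq12} in the paper), which gives the additional $\log^{\log c_6/\log c_7}$ factor. Your write-up conflates these with Stirling factors that have already cancelled; keeping this straight is essential when carrying out the final substitution. Otherwise the outline — including the observation that the hard part is the prime-content estimate behind $c_1$, proved in the paper by tracking the matrix products $\prod C_i$ modulo each prime $p\ge5$ — is faithful to the paper.
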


While the parameters in the theorem look complicated, it gives good
non-trivial lower bounds on $||q\xi||$. They become better than in
the classical Liouville theorem if $\log c_2
 < 2\log c_3
 $ or
equivalently, $c_2
 <c_3
 ^2$. In terms of $a$ and $t$, the last
inequality is
$$
11664 a^{12}(t^2+a)^5 < c_1^6e^6 t^8 (t^2+2a)^4.
$$
With software like Wolfram Mathematica, one can verify that this
inequality is always satisfied as soon as $t>10.34a^2$. To the best
of the author's knowledge, Theorem~1 is the first result of this
type for the family of cubic irrationals with the minimal polynomial
$3x^3-3tx^2-3ax+at$.

Also notice that as the parameter $a$ is fixed and $t\to\infty$, the
fraction $\frac{\log c_2 }{\log c_3 }$ approaches 1 and hence for
all $\epsilon>0$ there exists $t_0(\epsilon)$ such that for all
$t>t_0$ the solution $\xi$ from Theorem~\ref{th3} satisfies
$||q\xi||\gg q^{-1-\epsilon}$. The computations of the lower bounds
on $||q\xi||$ for some values of $t$ and $a$ are provided in
Table~\ref{tab1}.

\begin{table}[ht]
\renewcommand{\arraystretch}{1.2}
\noindent\begin{tabular}{|c|c|c|c|c|c|} \hline
$a$&$t$&Equation& \!For all\! $q\ge$& one has $||q\xi||\ge$& \!\!\footnotesize Numeric evidence\!\!\\
\hline
1&3&\footnotesize$x^3-3x^2-x+1=0$& 29 & \footnotesize $\gg q^{-4.276}$ & $\gg q^{-1.57}$\\
\hline 1&4&\footnotesize$3x^3-12x^2-3x+4=0$& 536 & \footnotesize
$\gg
q^{-3.166}$& $\gg q^{-1.98}$\\
\hline
1&11&\footnotesize$3x^3-33x^2-3x+11=0$& 27812480 & \footnotesize$1.4\cdot 10^{-16}q^{-1.963}\log^{-2.463}(0.0016 q)$& $\gg q^{-1.26}$\\
\hline
1&12&\footnotesize$x^3-12x^2-x+4=0$& 71929013 & \footnotesize$7.5\cdot 10^{-17}q^{-1.92}\log^{-2.42} (10^{-3}q)$& $\gg q^{-1.49}$\\
\hline
1&30&\footnotesize$x^3-30x^2-x+10=0$& $1.67\cdot 10^{12}$&\footnotesize \hspace{-1.5ex} $5.14\cdot 10^{-20}q^{-1.62}\log^{-2.12} (1.04\cdot10^{-5}q)$\!\!& $\gg q^{-1.35}$\\
\hline
2&6&\footnotesize$x^3-6x^2-2x+4=0$& 49 & \footnotesize$\gg q^{-5.81}$& $\gg q^{-1.93}$\\
\hline
2&42&\footnotesize$x^3-42x^2-2x+28=0$&$6.6\cdot 10^{10}$& \footnotesize$3.7\cdot 10^{-21}q^{-1.994}\log^{-2.494} (3.1\cdot 10^{-5}q)$&$ \gg q^{-1.35}$\\
\hline
2&43&\footnotesize$3x^3-126x^2-6x+86=0$&$8.6\cdot 10^{10}$& \footnotesize$3\cdot 10^{-21}q^{-1.984}\log^{-2.744} (2.7\cdot 10^{-5}q)$& $\gg q^{-1.63}$\\
\hline
3&7&\footnotesize$x^3-7x^2-3x+7=0$& 5 & \footnotesize$\gg q^{-12.19}$& $\gg q^{-1.46}$\\
\hline
3&94&\footnotesize$x^3-94x^2-3x+94=0$&$8\cdot 10^{12}$& \footnotesize$6.4\cdot 10^{-24}q^{-1.997}\log^{-2.497} (2.7\cdot 10^{-6}q)$& $\gg q^{-1.29}$\\
\hline
3&95&\footnotesize$x^3-95x^2-3x+95=0$&$9\cdot 10^{12}$& \footnotesize$5.8\cdot 10^{-24}q^{-1.993}\log^{-2.493} (2.6\cdot 10^{-6}q)$& $\gg q^{-1.15}$\\
\hline
\end{tabular}
\caption{lower bounds for $||q\xi||$}\label{tab1}
\end{table}

It is worth mentioning that the lower bounds in Theorem~\ref{th3}
are not best possible. Direct numerical computations of the first
several hundred convergents $p_n/q_n$ of the continued
fraction~\eqref{no5} suggest that better bounds should take place.
For example, for $a=1$ they indicate that even with $t=3$ we should
get a non-trivial lower bound on $||q\xi||$. We provide those better
numerical bounds in the last column of Table~\ref{tab1}. They can be
considered as the limitation of our method. In fact, for some pairs
$(a, t)$ the numerators and denominators of the convergents seem to
be both divisible by a big power of two or three, which leads to
substantially better estimates on $\gcd(p_{4k+2}, q_{4k+2})$ than in
Lemma~\ref{lem5}. But even without this phenomenon, some of the
estimates in the proof of Theorem~\ref{th3} do not look optimal (for
example, one in~\eqref{lem5_eq} and
probably~\eqref{eq11},~\eqref{eq10}).

We expect that an analogue of the hypergeometric method also applies
to other cubic series from the list. We will explore that direction
in further research. It is worth mentioning that every cubic series
can be easily transformed to the series~\textnumero 4 by an
appropriate M\"obius map.

Another important consequence of the constructed continued fractions
is that they provide cubic irrationals $\xi$ with very good rational
approximations $p/q$. In other words, with help of these continued
fractions one can construct infinitely many pairs $(p/q,
\xi)\in\QQ\times \AA_3$ with very small distances $|p/q - \xi|$ in
terms of the heights of $p/q$ and $\xi$. Here, by $\AA_3$ we denote
the set of all real cubic irrationals. As numerical computations
suggest (we will talk about them in detail later), these distances
are close to, if not one of, smallest possible among all pairs in
$\QQ\times \AA_3$. This allows us to investigate the question of
providing an effective uniform lower bound for $||q\xi||$ that is
satisfied for all cubic irrationals~$\xi$.

Denote by $P_\xi(x)$ the minimal polynomial of $\xi$ with integer
coprime coefficients. In this paper, by the height of $\xi$ we mean
its naive height, i.e. the maximal absolute value of the
coefficients of $P_\xi$, and denote it by $H(\xi)$. In
Section~\ref{sec4} we prove the following result.

\begin{theorem}\label{th4}
For any $\tau < 3 + \frac{15\ln 2}{24}\approx 3.4332...$ there
exists an effectively computable constant $c>0$ such that for all
$N_0\in \NN$ there exist infinitely many cubic irrationals $\xi\in
K_3$ such that the inequality
\begin{equation}\label{eq_th3}
||q\xi|| < \frac{1}{H(\xi)^\tau q e^{c\sqrt{\ln q}}}
\end{equation}
has more than $N_0$ solutions $q\in \NN$.
\end{theorem}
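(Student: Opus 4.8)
The plan is to build the cubic irrationals $\xi$ by specialising the continued fraction \eqref{no5} of family \textnumero 5 at suitable integer values of its parameters $t$ and $a$. This family is singled out because, among all the lists in the introduction, it is the only one whose continued fraction contains partial quotients of degree $3$ in $t$: once every four steps one meets $b_{4k+3}=3(4k+3)\,t(t^2+2a)$, all the other partial quotients having degree $1$. Writing $p_n/q_n$ for the convergents of \eqref{no5} and using the three–term recursions $q_n=b_nq_{n-1}+a_nq_{n-2}$, $p_n=b_np_{n-1}+a_np_{n-2}$ together with $p_nq_{n-1}-p_{n-1}q_n=\pm\prod_{i=1}^n a_i$, one obtains that $p_{4k+2}/q_{4k+2}$ is an exceptionally good approximation of $\xi$: specialising $t$ to a large integer $T$ and $a$ to an integer of bounded size coprime to $3$ (so that $H(\xi)\asymp T$ and the cubic stays primitive), one has $|q_{4k+2}\xi-p_{4k+2}|\asymp\bigl(\textstyle\prod_{i=1}^{4k+3}|a_i|\bigr)/q_{4k+3}$ with $q_{4k+3}\asymp T^3q_{4k+2}$, hence $q_{4k+2}\,\|q_{4k+2}\xi\|$ is of order $H(\xi)^{-3}$ up to a factor independent of $T$. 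Thus for every $\tau<3$ the inequality \eqref{eq_th3} already holds at $q=q_{4k+2}$, and a direct count — the only loss terms being the growth of $\prod_{i=1}^{n}|a_i|$ in the convergent formula and the factor $e^{c\sqrt{\ln q}}$ — shows that, by taking $T$ large, it holds for more than $N_0$ values of $k$; checking irreducibility of $3x^3-3Tx^2-3ax+aT$ through its discriminant then gives infinitely many genuinely cubic $\xi$ of this kind.

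To raise the exponent from $3$ to $3+\tfrac{15\ln 2}{24}$ I would exploit the phenomenon already mentioned after Theorem~\ref{th3}: for special choices of $(a,t)$ the integers $p_{4k+2}$ and $q_{4k+2}$ carry a large common factor — essentially a power of $2$ (and, in some cases, of $3$) — far beyond what a bound of the type of Lemma~\ref{lem5} gives. Concretely, I would impose congruence conditions on $t$ and $a$ and prove by induction along the two recursions a lower bound for the $2$–adic (and $3$–adic) valuations of $p_n$ and of $q_n$: over each block of four steps the numerators $a_i$ of \eqref{no5} contribute a controlled power of $2$, and under the right congruences this power passes simultaneously to $p_n$ and to $q_n$. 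Cancelling the common factor $g_{4k+2}=\gcd(p_{4k+2},q_{4k+2})$ replaces $q_{4k+2}$ by $q'_{4k+2}=q_{4k+2}/g_{4k+2}$ and divides $q\,\|q\xi\|$ by $g_{4k+2}^{2}$, so that the effective exponent at $q'_{4k+2}$ becomes roughly $3+2\log g_{4k+2}/\log H(\xi)$; the asymptotic rate at which $\log g_{4k+2}$ accumulates per period of \eqref{no5}, weighed against $\log H(\xi)$ and $\log q'_{4k+2}$, is precisely what should deliver the constant $\tfrac{15\ln 2}{24}$.

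With this in hand the counting step is as in the first paragraph but with the parameters linked: given $\tau<3+\tfrac{15\ln 2}{24}$ and $N_0$, I would take $k$ large and choose $(t,a)$ (for instance $t$ of size a fixed power of $\prod_{i=1}^{4k+2}|a_i|$, say) so that \eqref{eq_th3} holds simultaneously at $q'_{4(k-N_0)+2},\dots,q'_{4k+2}$; the upper cut–off on the admissible indices again comes from the growth of $\prod_{i=1}^{n}|a_i|$ and of $e^{c\sqrt{\ln q}}$, which is why each $\xi$ has only finitely many solutions. Letting $k\to\infty$ and readjusting the parameters produces infinitely many such $\xi$, pairwise distinct because their heights are; and $c$ is effective since every ingredient — the asymptotics of the convergents, the valuation bounds, the discriminant test — is explicit.

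I expect the valuation/$\gcd$ estimate of the second paragraph to be the main obstacle: one must both show that $p_{4k+2}$ and $q_{4k+2}$ genuinely acquire a common factor of the right order (the induction is delicate and may close only for $t,a$ in fairly restrictive residue classes) and control its growth rate sharply enough to get the constant $\tfrac{15\ln 2}{24}$ rather than a weaker one. Combined with the balancing in the third paragraph — keeping at least $N_0$ convergents good while still having an infinite supply of admissible $(t,a)$ — this is where essentially all of the work concentrates.
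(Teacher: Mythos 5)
Your high-level picture is the same as the paper's: work with family \textnumero~5, use the convergents $p_{4k+2}/q_{4k+2}$, and trade the exponent above $3$ for the common factor one can cancel from $p_{4k+2}$ and $q_{4k+2}$. But there is a genuine gap at the centre of the argument, and it is exactly where you flagged a ``loss term'' without resolving it.

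The error term $|q_{4m+2}\xi - p_{4m+2}|$ is governed by the cross-determinant $p_{4m+2}q_{4m+3}-p_{4m+3}q_{4m+2}=\pm\prod_{i}\beta_i$, and for family \textnumero~5 with $a=1$ the $\beta_i$ are $2(3j+1),\,6j+1,\,2(3j+2),\,6j+5$, so $\prod_{i\le 4m+2}\beta_i$ grows like a factorial in $m$. For a generic large integer $T$ this factor overwhelms $T^{-3}$ already for $m$ of bounded size, so you cannot let $m$ run up to $\asymp k$ — but you \emph{must} let it, both to collect $N_0$ solutions and to obtain decay like $e^{c\sqrt{\log q}}$ rather than a constant. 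The paper's fix is the choice $t=t_k:=\mathrm{lcm}\{1\le a\le 6k+1:\ a\equiv\pm1\ (\mathrm{mod}\ 6)\}$, which your proposal never introduces (your ``$t$ of size a fixed power of $\prod|a_i|$'' is the wrong structure: what matters is not the magnitude of $t$ but its divisibility by all primes $\ge 5$ up to $6k+1$). This choice is what makes the transformed partial quotients $a_i^*$ of Lemma~\ref{lem8} stay integral (Lemma~\ref{lem10}) when one strips out the odd part of each $\beta_i$, so that the cross-determinant collapses to a pure power of~$2$ of size $P_m\ll m\,2^{4m}$ (Lemma~\ref{lem16}) rather than a factorial. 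Only then does the residual $2$-adic gcd from Lemma~\ref{lem9} (which needs only $t$ odd, $a=1$) improve the exponent further: with $k=8k_0$, $2^{15k_0}$ divides $p_{32k_0+2}$ and $q_{32k_0+2}$, and balancing $4^{15k_0}$ against $t_k\sim e^{\psi(6k+1)}\sim e^{48k_0}$ (Rosser–Schoenfeld) is precisely what produces $\tfrac{15\ln2}{24}$ and the window $(1-\epsilon)k_0<m<k_0$ of $N_0$ admissible denominators.

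So the two missing ideas are: (i) the specific $\mathrm{lcm}$ construction of $t_k$, together with the continued-fraction transformation (Lemma~\ref{lem8}/\ref{lem10}) that converts the factorial product $\prod\beta_i$ into the controlled power $P_m$; and (ii) the quantitative coupling $t_k\approx e^{48k_0}$ via the Chebyshev function, which is what ties the size of the $2$-adic gcd to a definite power of $H(\xi)$. Your ``impose congruence conditions and bound the $2$-adic valuations'' is in the spirit of (ii) and matches Lemma~\ref{lem9}, but without (i) the estimate on $\|q_{4m+2}\xi\|$ degrades for growing $m$ and the counting in your last paragraph does not go through.
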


Notice that, according to the Khintchine theorem, for almost all
$\xi\in\RR$ and all but finitely many $q\in\ZZ$ one must have
$||q\xi||\ge (q \ln^{1+\epsilon} q)^{-1}$. Therefore
Theorem~\ref{th4} provides cubic irrationals with much better
estimates on $||q\xi||$ than what is expected, for many (but
finitely many) values of $q$.

We also provide heuristic evidence that the power $\tau$ in
Theorem~\ref{th4} is nearly best possible and formulate the
following conjecture.

\begin{conja}
For any $\epsilon>0$ define $\tau_0 = 3 + \frac23 \ln2 +\epsilon
\approx 3.462+\epsilon$. There exists an absolute constant $C =
C(\epsilon)$ such that for all real cubic irrationals $\xi$ all
partial quotients $a_n$ satisfy
$$
|a_n|\le C n^{1+\epsilon} H(\xi)^{\tau_0}.
$$
\end{conja}

In view of the classical estimates for the convergents: $q_n \le
\phi^{n-1}$ and $||q_n\xi||< (a_{n+1}q_n)^{-1}$ where $\phi =
\frac{\sqrt{5}+1}{2}$, we can reformulate Conjecture~A in a more
standard form:

\begin{conjas}\label{conj2}
Let $\epsilon>0$, $\tau_0$ and $C$ be as in Conjecture~A. There
exists an effectively computable constant $c$ such that for all real
cubic irrational numbers $\xi$ and all $q\in \NN$,
\begin{equation}\label{conja_eq2}
||q\xi|| \ge \frac{c}{H(\xi)^{\tau_0} q(\ln q)^{1+\epsilon}}.
\end{equation}
Moreover, one can take $c = \frac{\ln^2\phi}{C}$.
\end{conjas}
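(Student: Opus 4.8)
Since Conjecture~A* is stated as a reformulation of Conjecture~A, the plan is to derive~\eqref{conja_eq2} from~\eqref{conja_eq}; the only ingredients are the classical continued fraction dictionary and the bookkeeping of constants, so I would \emph{assume} the bound $|a_n|\le Cn^2H(\xi)^\tau$ on all partial quotients of a real cubic irrational $\xi$ and deduce the lower bound on $\|q\xi\|$.

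Fix $\xi$ and write its continued fraction $\xi=[a_0;a_1,a_2,\dots]$ with convergents $p_n/q_n$; the value of $a_0$ is irrelevant below, since $\|q\xi\|$ and the numbers $q_n$ and $a_n$ with $n\ge1$ are unchanged when an integer is added to $\xi$. For $q\in\NN$ let $n$ be the index with $q_n\le q<q_{n+1}$. First I would invoke the theory of best approximations, $\|q\xi\|\ge\|q_n\xi\|$, together with the identity $\|q_n\xi\|=(\alpha_{n+1}q_n+q_{n-1})^{-1}$ coming from $\xi=(\alpha_{n+1}p_n+p_{n-1})/(\alpha_{n+1}q_n+q_{n-1})$, where $\alpha_{n+1}=[a_{n+1};a_{n+2},\dots]\in(a_{n+1},a_{n+1}+1)$ is the complete quotient. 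This gives $\|q_n\xi\|>(q_{n+1}+q_n)^{-1}$, and multiplying by $q\ge q_n$ and using $q_{n+1}=a_{n+1}q_n+q_{n-1}<(a_{n+1}+1)q_n$,
\[
q\,\|q\xi\|\;>\;\frac{q_n}{q_{n+1}+q_n}\;=\;\frac{1}{a_{n+1}+1+q_{n-1}/q_n}\;>\;\frac{1}{a_{n+1}+2}.
\]

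Next I would insert Conjecture~A at the index $n+1$, $a_{n+1}\le|a_{n+1}|\le C(n+1)^2H(\xi)^\tau$, and bound $n$ via the Fibonacci-type growth $q_m\ge\phi^{m-1}$ quoted before the statement: from $q\ge q_n\ge\phi^{n-1}$ one gets $n+1\le 2+\log q/\ln\phi$, hence $(n+1)^2\le(\log q/\ln\phi)^2\bigl(1+o(1)\bigr)$ as $q\to\infty$. Combining these,
\[
\|q\xi\|\;>\;\frac{1}{q\bigl(C(n+1)^2H(\xi)^\tau+2\bigr)}\;\ge\;\frac{c}{H(\xi)^\tau\,q\,(\log q)^2}
\]
with $c$ tending to $\ln^2\phi/C$. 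Inspecting where the estimate is tight --- $q$ close to $q_n$ with $a_1,\dots,a_n$ all small (so $q_n\approx\phi^n$) while $a_{n+1}$ is as large as Conjecture~A allows, $a_{n+1}\approx C(n+1)^2H(\xi)^\tau$ --- both pins the optimal constant to $c=\ln^2\phi/C$ and shows that the exponents of $q$, $\log q$ and $H(\xi)$ in~\eqref{conja_eq2} cannot be improved under Conjecture~A.

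The only genuine obstacle is Conjecture~A itself, which remains open; the rest is routine. The single delicate point in the translation is the precise value of $c$: the crude step $n+1\le2+\log q/\ln\phi$ discards lower-order terms, so literally one obtains~\eqref{conja_eq2} with $c$ slightly below $\ln^2\phi/C$ for small $q$ (equivalently, $\ln^2\phi/C$ is the sharp asymptotic constant, and to have it uniformly in $q$ one enlarges $C$ by a bounded factor). I would not pursue the converse implication Conjecture~A* $\Rightarrow$ Conjecture~A, since recovering a bound of the shape $Cn^2H(\xi)^\tau$ for $a_n$ from~\eqref{conja_eq2} is not a formal consequence: applying~\eqref{conja_eq2} at $q=q_n$ yields only $a_{n+1}<H(\xi)^\tau(\log q_n)^2/c$, and converting $\log q_n$ into a quantity controlled by $n$ is exactly the hard part of the subject rather than bookkeeping.
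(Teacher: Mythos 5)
Your derivation is correct and is exactly what the paper sketches in the one-sentence justification preceding the statement: combine the conjectural bound $a_{n+1}\le C(n+1)^2H(\xi)^\tau$ with the Fibonacci growth $q_n\ge\phi^{n-1}$ (the paper writes $q_n\le\phi^{n-1}$, evidently a typo, since the reformulation needs a \emph{lower} bound on $q_n$ to turn $n$ into $\log q$) and the lower estimate $||q_n\xi||>\bigl((a_{n+1}+2)q_n\bigr)^{-1}$ (the paper cites only the companion upper estimate $||q_n\xi||<(a_{n+1}q_n)^{-1}$). You also correctly flag the two caveats the paper leaves implicit: the statement remains conditional on the open Conjecture~A, and the stated constant $c=\ln^2\phi/C$ is only the sharp asymptotic value, since the passage $n+1\le 2+\ln q/\ln\phi$ carries a bounded additive term that must be absorbed for small $q$.
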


This conjecture not only implies the Roth theorem for cubic
irrationals but also the stronger conjecture of
Lang~\cite{lang_1991} which says that $||q\xi||\gg (q\ln
q^{1+\epsilon})^{-1}$ where the implied constant may depend on
$\xi$. Conjecture~A is out of reach by current methods and we
believe it will be extremely hard to prove.

In Section~\ref{sec4}, we provide heuristic arguments supporting the
statement that all partial quotients $a_n$ which come from the
discovered continued fractions are not bigger than
\begin{equation}\label{conja_eq}
|a_n|\le Cn^2 H(\xi)^{3+\frac{2\ln 2}{2.88}},
\end{equation}
where the constant $C$ can be explicitly computed. To further verify
Conjecture~A, we have numerically computed the continued fraction
expansions of a wide set of algebraic numbers. For the sake of
simplifying the code, we only considered numbers inside the interval
$(0,1)$ and if a cubic equation has more than one root in that
region, we considered only one of them. Therefore our search is not
exhaustive. For all polynomials of height at most 2, we computed the
partial quotients up to index 10000, for all $P$ with $H(P)\le 5$ up
to index 5000, for all $P$ with $H(P)\le 10$ up to index 1000 and
for all $P$ with $H(P)\le 100$ up to index 50. As a result, we have
found only 6 instances when the constant $C$ in~\eqref{conja_eq} is
bigger than 8. All of them are for $\xi$ with $H(\xi)\le 7$. On top
of that, there are only four more instances with $C>2$. The largest
discovered $C=17.751\ldots$ is for the root of $2x^3+2x^2+2x-1=0$.
Since the search was not exhaustive, we can not claim this is the
right value for Conjecture~A. However it is probably safe to write
that the conjecture is satisfied for $C=100$ and that for $C=2$ it
is possible to explicitly write down all its counterexamples. In
Appendix~\ref{app2}, we list all discovered cubic irrationals with
$C>2$. We conclude with a conjecture, where all the parameters are
made specific.

\begin{conjb}\label{conj1}
Let $\tau_3 = 3+ \frac{2\ln 2}{2.88} \approx 3.4814$. For all real
cubic irrationals $\xi$ all partial quotients $a_n$ satisfy
$$
|a_n| \le 100 n^2 H(x)^{\tau_3}.
$$
Also, for all $q\in \ZZ$ and all cubic irrationals $\xi$ one has
$$
||q\xi|| \ge \frac{1}{440 H(\xi)^{\tau_3} q(\ln q)^2}.
$$
\end{conjb}

As the last application, in Section~\ref{sec6} we demonstrate that
every cubic irrational number admits a (generalised) continued
fraction expansion in a closed form.

%We believe that one will then be able to apply an analogue of the
%hypergeometric method to all real cubic numbers. However, we do not
%investigate this question here and leave it for further research.

\begin{theorem}\label{th2}
Let $\xi\in\RR$ be a cubic algebraic number. Then there exists a
M\"obius transformation $\mu: x \mapsto \frac{ax+b}{cx+d}$ with
$a,b,c,d\in\ZZ$ such that $\mu(\xi)$ enjoys the continued fraction
expansion of the form~\eqref{no4} which converges to $\mu(\xi)$.
\end{theorem}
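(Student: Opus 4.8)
The plan is to separate a purely algebraic reduction (which is essentially forced) from an analytic convergence statement (the real content). The algebraic step produces, for an arbitrary cubic irrational $x$, a M\"obius transformation with rational, hence after clearing denominators integer, coefficients sending $x$ to a root of some trinomial $X^3-tX^2-a$ with $t,a\in\QQ$; the analytic step then specialises the continued fraction~\eqref{no4}, established in Section~\ref{sec1} for series~\textnumero~4, to those rational values and checks that it converges to $\tau(x)$. The observation driving the reduction is that a monic cubic has the form $X^3-tX^2-a$ exactly when the second elementary symmetric function of its roots vanishes; since then the product of the roots is $a\neq0$, this is equivalent to saying that the reciprocals of the roots sum to $0$.

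For the reduction, let $f(X)=X^3+pX^2+qX+r\in\QQ[X]$ be the minimal polynomial of $x$, with (automatically distinct) roots $x=x_1,x_2,x_3$. Choose a rational $C$ outside a finite exceptional set (so that $f(-C)\neq0$ and the pole of the map below avoids $x_1,x_2,x_3$) and any nonzero rational $B$, put $A:=\frac{B f'(-C)}{3f(-C)}$ and
$$
\tau(X):=\frac{X+C}{AX+AC+B},
$$
which has determinant $B\neq0$ and so is an honest M\"obius transformation. From $\tau(x_i)^{-1}=A+\frac{B}{x_i+C}$ and $\sum_i\frac{1}{x_i+C}=-\frac{f'(-C)}{f(-C)}$ (the logarithmic derivative of $f$ at $-C$), the choice of $A$ forces $\sum_i\tau(x_i)^{-1}=0$. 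Since $\tau$ has rational coefficients, $\tau(x_1),\tau(x_2),\tau(x_3)$ are precisely the conjugates of $\tau(x)$, so $\tau(x)$ has degree $3$ with minimal polynomial $Y^3-tY^2-a$, where $t:=\sum_i\tau(x_i)\in\QQ$ and $a:=\prod_i\tau(x_i)\in\QQ$; multiplying numerator and denominator of $\tau$ by a common denominator makes its coefficients integral without changing the map. Note that two rational parameters, $B$ and $C$, remain free.

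The analytic step uses that freedom. Once $a$ is specialised to a rational $a_0\neq0$, the algebraic function $t\mapsto x(t)$ solving $X^3-tX^2-a_0=0$ with $x(t)=t+a_0t^{-2}+\cdots\in\QQ[[t^{-1}]]$ (the branch selected for series~\textnumero~4) has discriminant in $X$ equal to $-a_0(4t^3+27a_0)$, so its branch points all have modulus $(27|a_0|/4)^{1/3}$. First I would show that $B,C$ can be chosen so that the specialised pair $(t_0,a_0)=(\sum_i\tau(x_i),\prod_i\tau(x_i))$ satisfies $|t_0|^3>\tfrac{27}{4}|a_0|$ with $|t_0|$ as large as desired; e.g. along the one-parameter family with $C$ fixed of large modulus and $B\to0$, one has $|t_0|\to\infty$ while $t_0^3/a_0$ stays in the prescribed region. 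In that region three things hold: (i) $x(t)$ converges at $t=t_0$; (ii) the numerical continued fraction~\eqref{no4} converges, since after an equivalence transformation its tail partial quotients have moduli tending to $\tfrac{27}{16}|a_0|/|t_0|^3$, which is $<\tfrac14$ precisely because $|t_0|^3>\tfrac{27}{4}|a_0|$, so a Worpitzky/Pringsheim estimate applies; and (iii) $x(t_0)$ is the real root of $X^3-t_0X^2-a_0$ of largest modulus (it is $\approx t_0$). Using (iii), I would additionally choose $B,C$ so that $\tau(x)$ is exactly that root: when $x$ has a pair of complex conjugates this is automatic, as the cubic then has a unique real root; when $x$ is totally real, one uses the freedom in $C$ to place the pole $X_0(C)=-C-3f(-C)/f'(-C)$ of $\tau$ in the gap between consecutive conjugates that sends $x$ to the extreme root. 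Finally, since~\eqref{no4} is an identity in $\QQ[[t^{-1}]]$ by Section~\ref{sec1}, its convergents $P_n(t)/Q_n(t)$ satisfy $\deg\big(x(t)Q_n(t)-P_n(t)\big)\to-\infty$; evaluating at $t_0$, which lies off the cuts joining the branch points, the standard convergence of these Pad\'e-type approximants of an algebraic function yields $P_n(t_0)/Q_n(t_0)\to x(t_0)=\tau(x)$, which is the assertion of the theorem.

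The main obstacle is the analytic step, not the reduction. The continued fraction~\eqref{no4} is not of Pringsheim type for all parameter values -- its numerators grow like $k^2a$ against denominators of order $kt$ or $kt^2$ -- so numerical convergence genuinely requires $|t_0|$ large compared with $|a_0|$, and one must verify that the region guaranteeing this is simultaneously compatible with the reduction of the second paragraph, with $\tau(x)$ landing on the correct real root, and with $x(t)$ converging at $t_0$. The most delicate point is the passage from the formal statement $\deg\big(x(t)Q_n(t)-P_n(t)\big)\to-\infty$ to the numerical limit, that is, obtaining uniform (in $n$) control of the relevant Laurent coefficients, or equivalently invoking the appropriate convergence theorem for Pad\'e approximants of an algebraic function off its cuts.
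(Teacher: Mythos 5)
Your algebraic reduction is essentially a reparametrisation of the paper's: both rewrite the required vanishing of the linear coefficient of the minimal polynomial of $\tau(x)$ as one rational condition on the family of M\"obius maps, leaving one genuine free parameter (your $C$, the paper's $v/w$; your $B$ and the paper's $w$ are merely scalings). So far the two are interchangeable. The substantive part of the theorem is elsewhere, and there your proposal has a real gap.

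The gap is in your claim that, with $C$ of large modulus and $B\to 0$, the pair $(t_0,a_0)$ lands in the region where~\eqref{no4} converges. Since $t_0^3/a_0$ is invariant under scaling $B$, the recipe reduces to the assertion that $|t_0^3/a_0|$ is large for $|C|$ large. It is not: a short expansion shows that, as $C\to\infty$, one has $\tau(x_i)^{-1}\sim -\tfrac{B}{C^2}\bigl(x_i+\tfrac{p}{3}\bigr)$, hence
$$
\frac{t_0^3}{a_0}\;\longrightarrow\;\Bigl(\sum_i\frac{1}{x_i+p/3}\Bigr)^3\prod_i\bigl(x_i+p/3\bigr)\;=\;\frac{P^3}{Q^2}\;=\;-\frac{27}{4}-\frac{\Delta}{4Q^2},
$$
where $Y^3+PY+Q$ is the depressed form of $P_\xi$ and $\Delta$ its discriminant. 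This limit is a \emph{fixed} number depending only on $P_\xi$, and it need not lie in the region required by Lemmas~\ref{lem11}--\ref{lem14} (roughly $a_0>0$ and $|t_0|^3\ge 12\,a_0$). For instance, for $P_\xi=X^3-X+1$ the limit is $-1$, so for all large $C$ the ratio $t_0^3/a_0$ is near $-1$, far outside any admissible region. It is true that $t_0^3/a_0$ is a non-constant rational function of $C$ and so \emph{some} $C$ will make it large, but your recipe does not locate such a $C$, and proving its existence is precisely the content the paper supplies. The paper does this differently: it identifies the cubic resolvent polynomial $Q(z)$ whose root $z=z(x)$ corresponds, under the quadratic change of parameter, to the chosen root $x$; it then uses Dirichlet's theorem to pick $v/w$ with $|z-v/w|<w^{-2}$, so that $|Q(v/w)|\ll w^{-2}$ while $|R(v/w)|$ stays bounded away from $0$. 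This forces $|a_0|=|w^3Q(v/w)|^2\ll w^2$ while $|t_0|=3w^2|R(v/w)|\gg w^2$, making $|t_0|^3/|a_0|\to\infty$ and also, via Lemma~\ref{lem12}, landing $\tau(x)$ on the correct root. That Diophantine step is the key idea and is missing from your plan.

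A second, smaller gap: you defer the numerical convergence of the specialised continued fraction~\eqref{no4} to a Worpitzky/Pringsheim estimate and to ``standard convergence of Pad\'e approximants off the cuts.'' Neither is off-the-shelf here. The partial numerators of~\eqref{no4} grow like $k^2a$ against partial denominators of order $kt$ or $kt^2$, so no Pringsheim-type criterion applies directly; one needs the equivalence transformations of Lemma~\ref{lem8} and then quantitative recursions. The paper proves this from scratch (Lemmas~\ref{lem13} and~\ref{lem14}, parallel to the $4k$-block analysis done for~\eqref{no5}), and also identifies which complex root the limit is (Lemma~\ref{lem11}). To complete your argument you would have to reproduce these, or find and verify the hypotheses of a genuine black-box theorem, which you have not done.
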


Notice that if we have a continued fraction expansion $\bigk \left[
\begin{array}{ccc}
&\beta_1&\beta_2\\
a_0&a_1&a_2
\end{array} \cdots
\right]$ of $y = \mu(x)$ then we can construct a continued fraction
for $x = \mu^{-1}(y) = \frac{uy+v}{wy+z}$ too as follows:
$$
x = \frac{u}{w} + \frac{vw-uz}{\displaystyle wz+w^2a_0 +
\frac{w^2\beta_1}{\displaystyle a_1 + \frac{\beta_2}{a_2 +
\cdots}}}.
$$
Hence we can provide a continued fraction expansion in the closed
form for all cubic $\xi\in\RR$.

\section{Laurent series}\label{ssec_1_2}

Let $\FF$ be a field. Consider the set $\FF[[t^{-1}]]$ of the
Laurent series together with the valuation: $||\sum_{k=-d}^\infty
c_kt^{-k}|| = d$, the biggest degree $d$ of $t$ having non-zero
coefficient $c_{-d}$. Sometimes in the paper we call it the degree
of a series because it matches the definition of the degree for
polynomials. We also use the notation $[f]$ for the polynomial part
of $f$, i.e. $[f]:=\sum_{k=-d}^0 c_kt^{-k}$. It is well known that
in this setting the notion of continued fraction is well defined. In
other words, every $f(t)\in \FF[[t^{-1}]]$ can be written as
$$
f(t) = [a_0(t), a_1(t),a_2(t),\ldots] = a_0(t) + \frac{1}{a_1(t) +
\frac{1}{a_2(t) + \cdots}} ,
$$
where $a_i(t)\in \FF[t]$ are called partial quotients, and $\deg(a_i)\ge 1$
for all $i\ge 1$. We refer the reader to a nice survey~\cite{poorten_1998}
for more properties of the continued fractions of Laurent series.

In this paper, we have $\FF = \QQ$ and then for a given $f\in\QQ[[t^{-1}]]$
the partial quotients $a_i$ are polynomials with rational coefficients. It
will be more convenient to renormalise this continued fraction by multiplying
its numerators and denominators by appropriate integer numbers so that all
the coefficients of $a_i$ are integer:
$$
 f(t) = \frac{1}{\beta_0}\left(a_0(t) + \frac{\beta_1}{a_1(t) + \frac{\beta_2}{a_2(t) + \frac{\beta_3}{a_3(t) + \cdots}}}\right) =: \bigk \left[ \begin{array}{ccc}
\beta_0&\beta_1&\beta_2\\
a_0(t)&a_1(t)&a_2(t)
 \end{array}\cdots\right],
$$
where $\beta_i\in\ZZ\setminus\{0\}$ and $a_i(z)\in \ZZ[t]$ for $i\ge 1$. If
$\beta_0 = 1$ then sometimes we omit $\beta_0$ in the notation and write
$\bigk \left[
\begin{array}{ccc}
&\beta_1&\beta_2\\
a_0&a_1&a_2
\end{array} \cdots
\right]$.

By analogy with the classical continued fractions over $\RR$, by $k$'th
convergent of $f$ we denote the rational function
$$
\frac{p_k(t)}{q_k(t)}:= \bigk \left[ \begin{array}{ccccc}
\beta_0&\beta_1&\beta_2&\cdots&\beta_k \\
a_0(t)&a_1(t)&a_2(t)&\cdots&a_k(t)
 \end{array}\right].
$$
The convergents satisfy the following recurrent relation:
\begin{equation}\label{convp}
p_0(z) = a_0(z), \quad p_1(z) = a_0(z)a_1(z) + \beta_1,\quad p_n(z)
= a_n(z)p_{n-1}(z) + \beta_np_{n-2}(z),
\end{equation}
\begin{equation}\label{convq}
q_0(z) = 1, \quad q_1(z) = a_1(z),\quad q_n(z) = a_n(z)q_{n-1}(z) +
\beta_nq_{n-2}(z).
\end{equation}
The important property of convergents is that they are the best
rational approximants of the series $f$. That is, the analogue of
the Lagrange's theorem is true: $p(t)/q(t)\in \QQ(t)$ with coprime
polynomials $p$ and $q$ is a convergent of $f\in \QQ[[t^{-1}]]$ if
and only if $\deg (f - p/q) < -2\deg(q)$.

By $k$'th full quotient of $f$ we denote the continued fraction
$$
f_k(t):= \bigk \left[ \begin{array}{ccc}
\beta_k&\beta_{k+1}&\beta_{k+2}\\
a_k(t)&a_{k+1}(t)&a_{k+2}(t)
 \end{array}\cdots\right].
 $$
One can easily verify that the consecutive full quotients of $f$ satisfy
the relation:
\begin{equation}\label{def_phi}
f_{k+1} = \phi_{a_k, \beta_k}\circ f_k := \frac{1}{\beta_k f_k -
a_k}.
\end{equation}

In this paper we consider algebraic Laurent series $x(t)$, i.e. solutions of
the equations of the form
\begin{equation}\label{def_x}
b_d x^d + b_{d-1}x^{d-1} + \cdots + b_1x + b_0=0
\end{equation}
where $b_0, b_1,\ldots, b_d\in\QQ[t]$. By a modification of the
Newton-Puiseux theorem, there exists a neighbourhood $U_\infty\subset\CC$ of
infinity such that $x$, considered as a Laurent series in $\CC[[t^{-1}]]$,
converges in $U_\infty \setminus\{\infty\}$ and the limit is a holomorphic
function that we also write as $x(t)$. If $\deg x>0$ then it has a pole at
infinity.

Consider the sequence $p_k/q_k$ of convergents of $x$. Since they are all in
$\CC(t)$, they are all holomorphic in the whole space $\CC$. Suppose that, as
holomorphic functions, $p_k/q_k$ converge uniformly to $f(t)$ in some
neighbourhood $V_\infty$ of $\infty$. Then $f$ is also a holomorphic function
in $V_\infty$ and hence has a Laurent series expansion that is convergent in
some neighbourhood $V^*_\infty \subset V_\infty$ of infinity. But then it
must coincide with the expansion of $x$.

Finally, we can analytically continue both functions $f$ and $x$ and
derive the following statement: as soon as the functions $p_k/q_k$
converge uniformly in some connected neighbourhood $U$ of $\infty$,
the limit $\lim_{k\to\infty} \frac{p_k(t)}{q_k(t)}$ equals $x(t)$,
which is a solution of the equation~\eqref{def_x}. Note that we do
not need the Laurent series of $x$ to converge at $t$ for this
statement to take place. In other words, as soon as we compute the
continued fraction expansion of $x\in\QQ[[t^{-1}]]$, for all $t\in
U$ its specialisation at $t$ is also a continued fraction expansion
of an algebraic number $x(t)$.

If a continued fraction for some algebraic $x\in\QQ[[t^{-1}]]$ is constructed
then one can immediately construct many more continued fractions by one of
the following operations:\\[-5ex]
\begin{itemize}
\item append or remove several first partial quotients;\\[-5ex]
\item replace the variable $t$ by $P(t)$ where $P\in\QQ[t]$.\\[-5ex]
\end{itemize}
Therefore, in order to consider only essentially different continued
fractions, we define the equivalence relation between them. We say
that two series $x$ and $y$ are equivalent if the continued fraction
of one of them can be achieved from that of another one by a finite
number of transformations: adding or removing one partial quotient
at the beginning of the continued fraction and replacing the
variable $t$ by $P(t)$.

One observes that if $x = \bigk \left[
\begin{array}{ccc}
&\beta_1&\beta_2\\
a_0&a_1&a_2
\end{array} \cdots
\right]$ then the continued fraction of $ux + v$ for $u,v\in \ZZ$
can be written as $\bigk \left[
\begin{array}{ccc}
&u\beta_1&\beta_2\\
ua_0 + v&a_1&a_2
\end{array} \cdots
\right]$, i.e. $x\sim ux+v$. It is also easy to verify that $x\sim
1/x$. Indeed, that is obvious if $\deg x\neq 0$, i.e. $a_0(t) = 0$
or $\deg a_0 \ge 1$. In the case when $a_0 = \mathrm{const}$, one
verifies that
$$
\frac{1}{x} = \bigk \left[
\begin{array}{cccc}
&-\beta_1& a_0\beta_2&\beta_3\\
1/a_0&a_0a_1+\beta_2&a_2&a_3
\end{array} \cdots
\right].
$$
By combining the last two statements, we derive that if $x$ and $y$
are related by the M\"obius transformation $y = \frac{ax+b}{cx+d}$,
$a,b,c,d\in\ZZ$, $ad-bc\neq 0$, then $x\sim y$. The other
easy-to-spot condition for equivalent series is as follows: if the
algebraic equation for $x$ is $F(x,t)=0$ and the one for $y$ is
$F(y,P(t))=0$ for some $P\in\ZZ[t]$ then $x\sim y$.

We finish this section with a well known result about
transformations of a continued fraction that do not change its
limit. Since we did not find a good reference for it, we add its
proof here.

\begin{lemma}\label{lem8}
Let $\bigk \left[\cdots
\begin{array}{ccc}
\beta_{i-1}&\beta_i&\beta_{i+1}\\
a_{i-1}&a_i&a_{i+1}
\end{array} \cdots
\right] $ be a continued fraction and $A\neq 0$ a rational number. Then the
transformations
\begin{enumerate}
\item $\beta_{i-1} \mapsto \beta_{i-1}/A, a_{i-1}\mapsto a_{i-1}/A$,
    $\beta_i\mapsto \beta_i/A$
\item $\beta_{i-1} \mapsto \beta_{i-1}/A, a_{i-1}\mapsto a_{i-1}/A$,
    $a_i\mapsto a_i A$, $\beta_{i+1} \mapsto \beta_{i+1}A$
\end{enumerate}
do not change the limit of the continued fraction. Moreover, the convergents
$p_i/q_i$ do not change under these transformations too.
\end{lemma}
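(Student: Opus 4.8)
The plan is to prove both transformations by tracking how they affect the recurrence relations~\eqref{convp} and~\eqref{convq} for the convergents $p_k/q_k$, and to verify that $p_i/q_i$ is left unchanged; since the convergents determine the limit, this suffices. The cleanest way to organise this is to exploit the standard two-term recurrences: the effect of a local change at indices $i-1, i, i+1$ on $p_k, q_k$ only propagates forward from index $i-1$, so I only need to check that $p_j, q_j$ for $j \le i-2$ are untouched (immediate, since those depend only on $\beta_0,\dots,\beta_{j}$ and $a_0,\dots,a_j$), and then that $p_{i-1}, q_{i-1}, p_i, q_i, p_{i+1}, q_{i+1}$ come out identical after the substitution.

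For transformation~1, the data $(\beta_{i-1}, a_{i-1}, \beta_i) \mapsto (\beta_{i-1}/A, a_{i-1}/A, \beta_i/A)$. Applying~\eqref{convp}: the new $p_{i-1}' = a_{i-1}' p_{i-2} + \beta_{i-1}' p_{i-3} = \tfrac1A(a_{i-1}p_{i-2} + \beta_{i-1}p_{i-3}) = \tfrac1A p_{i-1}$, and likewise $q_{i-1}' = \tfrac1A q_{i-1}$. Then $p_i' = a_i p_{i-1}' + \beta_i' p_{i-2} = \tfrac1A a_i p_{i-1} + \tfrac1A \beta_i p_{i-2} = \tfrac1A p_i$, and similarly $q_i' = \tfrac1A q_i$. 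Hence $p_i'/q_i' = p_i/q_i$, i.e. the $i$-th convergent is unchanged. Finally $p_{i+1}' = a_{i+1} p_i' + \beta_{i+1} p_{i-1}' = \tfrac1A(a_{i+1}p_i + \beta_{i+1}p_{i-1}) = \tfrac1A p_{i+1}$, and $q_{i+1}' = \tfrac1A q_{i+1}$; from index $i+1$ onward the recurrence uses only $a_j, \beta_j$ with $j > i+1$, which are unchanged, so by induction $p_j' = \tfrac1A p_j$ and $q_j' = \tfrac1A q_j$ for all $j \ge i-1$, whence every convergent $p_j'/q_j' = p_j/q_j$ and the limit is preserved.

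For transformation~2, $(\beta_{i-1}, a_{i-1}, a_i, \beta_{i+1}) \mapsto (\beta_{i-1}/A, a_{i-1}/A, A a_i, A\beta_{i+1})$ while $\beta_i$ is untouched. As before, $p_{i-1}' = \tfrac1A p_{i-1}$ and $q_{i-1}' = \tfrac1A q_{i-1}$. Now $p_i' = a_i' p_{i-1}' + \beta_i p_{i-2} = (A a_i)(\tfrac1A p_{i-1}) + \beta_i p_{i-2} = a_i p_{i-1} + \beta_i p_{i-2} = p_i$, and $q_i' = q_i$ exactly, so again the $i$-th convergent is unchanged (indeed $p_i', q_i'$ are unchanged on the nose). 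Then $p_{i+1}' = a_{i+1} p_i' + \beta_{i+1}' p_{i-1}' = a_{i+1} p_i + (A\beta_{i+1})(\tfrac1A p_{i-1}) = a_{i+1} p_i + \beta_{i+1} p_{i-1} = p_{i+1}$, and $q_{i+1}' = q_{i+1}$; for $j \ge i+1$ all subsequent $a_j, \beta_j$ are unchanged, so $p_j' = p_j$, $q_j' = q_j$ for all $j \ge i$, and the limit is preserved. I would write both cases compactly, perhaps noting that transformation~2 is the composition of transformation~1 (dividing $\beta_{i-1}, a_{i-1}, \beta_i$ by $A$) followed by the classical equivalence move $a_i \mapsto A a_i$, $\beta_i \mapsto A\beta_i$, $\beta_{i+1}\mapsto A\beta_{i+1}$ applied at the next index.

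I do not anticipate a genuine obstacle here: the statement is essentially bookkeeping on a linear recurrence, and the only mild subtlety is making sure the propagation of the rescaling factor $\tfrac1A$ (in case~1) through all later convergents is handled by a clean induction rather than checked index by index, and that one is explicit that rescaling $p_j$ and $q_j$ by a common nonzero constant does not change the rational function $p_j/q_j$ nor its limit. I would state that observation once at the start and then the rest is a two-line computation per transformation.
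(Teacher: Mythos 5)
Your proof is correct and follows essentially the same route as the paper: you track the rescaling factor through the recurrences \eqref{convp}–\eqref{convq}, show $p^*_{i-1}=p_{i-1}/A$, $q^*_{i-1}=q_{i-1}/A$, then in case~1 conclude $p^*_j=p_j/A$, $q^*_j=q_j/A$ for all $j\ge i-1$ and in case~2 $p^*_j=p_j$, $q^*_j=q_j$ for all $j\ge i$, so every convergent ratio is preserved. The closing remark that transformation~2 factors as transformation~1 followed by the standard equivalence move at index $i$ is a nice extra observation not present in the paper, but the core argument is identical.
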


\proof Let $(p_n/q_n)_{n\in \ZZ_{\ge 0}}$ be the convergents of the initial
continued fraction and $(p^*_n/q^*_n)_{n\in \ZZ_{\ge 0}}$ the convergents of
the modified one. Then we have $$p^*_{i-1} = \frac{a_{i-1}}{A}p_{i-2}+
\frac{\beta_{i-1}}{A}p_{i-3} = \frac{p_{i-1}}{A}.$$ Analogously, $q^*_{i-1} =
\frac{q_{i-1}}{A}$, therefore $p_{i-1}/q_{i-1} = p^*_{i-1}/q^*_{i-1}$.

Next, in the first case we have $$p^*_i = a_i p^*_{i-1} + \frac{\beta_i}{A}
p_{i-2} = \frac{p_i}{A}.$$ Analogously, we have that $q^*_i = \frac{q_i}{A}$
and again $p^*_i/q^*_i = p_i/q_i$. Then one can easily check that for all
$j>i$, $p^*_j = \frac{p_j}{A}$ and $q^*_j = \frac{q_j}{A}$ and the claim of
the lemma is verified.

In the second case, we have $p^*_i = a_iAp^*_{i-1} + \beta_i p_{i-2} = p_i$
and analogously $q^*_i = q_i$. Also, $p^*_{i+1} = a_{i+1}p^*_i + \beta_{i+1}A
p^*_{i-1} = p_{i+1}$ and the same is true for $q^*_{i+1} = q_{i+1}$. Finally,
since all other partial quotients remain unchanged, we have for all $j>i+1$,
$p^*_j = p_j$ and $q^*_j = q_j$ and the claim is verified. \endproof

\section{Constructing continued fractions}\label{sec1}

\subsection{Riccati equation}

Let $x(t)\in \QQ[[t^{-1}]]$ be a cubic series with the minimal
polynomial $P(x) = b_0 + b_1x + b_2x^2 + b_3x^3\in \QQ[t][x]$. One
of the main tools for constructing the continued fractions of
algebraic series is to understand how the equation for $x$ changes
under the transform $\phi_{a,\beta}$ from~\eqref{def_phi}, where
$a\in \QQ[t]$, $\beta\in \QQ$. While it is not difficult to compute
the minimal polynomial of $\phi_{a,\beta}(x)$ for a given algebraic
$x$, after applying a numbers of such transforms, very quickly the
coefficients of the minimal polynomials of the full quotients
$x_k(t)$ become very complicated. Their degrees usually grow to
infinity and their heights grow rapidly too. It is very hard to
recognise any pattern in them. Instead, we first notice that $x(t)$
satisfies a Riccati differential equation and then verify that the
new equation behaves much nicer under $\phi_{a,\beta}$. A similar
idea was considered by Osgood~\cite{osgood_73}.

Differentiation of $P(x)$ gives
$$
(b_1 + 2b_2x + 3b_3x^2)x' + (b_0' + b_1'x + b_2'x^2 + b_3'x^3)=0.
$$
Therefore, $x'(t)$ belongs to the same cubic extension of $\QQ(t)$
as $x$, and we can decompose it in the basis $1, x, x^2$. One can do
that in the following way. Let $x'$ satisfy the equation $x' = u +
vx + wx^2$ where $u,v,w\in \QQ(t)$ are rational functions to be
found. Then we write
$$
(b_1 + 2b_2x + 3b_3x^2)(u + vx+wx^2) + (b_0' + b_1'x + b_2'x^2 + b_3'x^3)= (\beta + \alpha x)(b_0 + b_1x + b_2x^2 + b_3x^3).
$$
Comparison of terms at $x^4$ and $x^3$ gives
$$\alpha = 3w\; \mbox{ and }\; \beta = 3v + \frac{b_3' - b_2w}{b_3}.
$$
Next, comparison of terms at $1, x$ and $x^2$ gives the system of
linear equations
$$
\left( \begin{array}{ccc}
b_3b_1&-3b_3b_0&b_2b_0\\
2b_3b_2&-2b_3b_1&b_2b_1-3b_3b_0\\
3b_3^2&-b_3b_2&b_2^2 - 2b_3b_1
\end{array}\right) \left( \begin{array}{c}
u\\v\\w
\end{array}\right) = \left( \begin{array}{c}
b_0b_3' - b_3b_0'\\
b_1b_3' - b_3b_1'\\
b_2b_3' - b_3b_2'
\end{array}\right).
$$
The application of Cramer's rule to this equation leads to the
following proposition:
\begin{proposition}\label{prop1}
Let $x(t)\in \QQ((t^{-1}))$ be a solution of a cubic equation
$b_3x^3 + b_2x^2 + b_1x + b_0=0$. Then $x$ is a solution of the
following Riccati differential equation:
\begin{equation}\label{riccati}
Dx' = A + Bx + Cx^2
\end{equation}
where $A, B, C, D\in \QQ[t]$ are computed by the following formulae:
\begin{equation}\label{eq_ricd}
D = \det\left( \begin{array}{ccc}
b_1&-3b_0&b_2b_0\\
2b_2&-2b_1&b_2b_1-3b_3b_0\\
3b_3&-b_2&b_2^2 - 2b_3b_1
\end{array}\right) = 4b_3b_1^3 + 4b_2^3b_0+27b_3^2b_0^2 - b_2^2b_1^2 - 18b_3b_2b_1b_0;
\end{equation}
\begin{equation}\label{eq_rica}
A = \frac{1}{b_3}\det\left( \begin{array}{ccc}
b_0b_3'-b_3b_0'&-3b_0&b_2b_0\\
b_1b_3'-b_3b_1'&-2b_1&b_2b_1-3b_3b_0\\
b_2b_3'-b_3b_2'&-b_2&b_2^2 - 2b_3b_1
\end{array}\right);
\end{equation}
\begin{equation}\label{eq_ricb}
B = \frac{1}{b_3}\det\left( \begin{array}{ccc}
b_1&b_0b_3'-b_3b_0'&b_2b_0\\
2b_2&b_1b_3'-b_3b_1'&b_2b_1-3b_3b_0\\
3b_3&b_2b_3'-b_3b_2'&b_2^2 - 2b_3b_1
\end{array}\right);
\end{equation}
\begin{equation}\label{eq_ricc}
C = \det\left( \begin{array}{ccc}
b_1&-3b_0&b_0b_3'-b_3b_0'\\
2b_2&-2b_1&b_1b_3'-b_3b_1'\\
3b_3&-b_2&b_2b_3'-b_3b_2'
\end{array}\right).
\end{equation}
\end{proposition}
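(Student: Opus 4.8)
The plan is to finish the linear-algebra computation already begun above and then read off the coefficients by Cramer's rule. First I would record that, $P=b_3X^3+b_2X^2+b_1X+b_0$ being the minimal polynomial of $x$ over $\QQ(t)$, it is irreducible of degree $3$; hence $b_3\neq 0$, the set $\{1,x,x^2\}$ is a $\QQ(t)$-basis of $\QQ(t)(x)$, and there are \emph{unique} $u,v,w\in\QQ(t)$ with $x'=u+vx+wx^2$. Substituting this into the differentiated relation $(b_1+2b_2x+3b_3x^2)x'+(b_0'+b_1'x+b_2'x^2+b_3'x^3)=0$ shows that the polynomial
$$
Q(X):=(b_1+2b_2X+3b_3X^2)(u+vX+wX^2)+b_0'+b_1'X+b_2'X^2+b_3'X^3
$$
lies in $\QQ(t)[X]$, has degree at most $4$, and satisfies $Q(x)=0$ (it is just $\tfrac{d}{dt}P(x)$). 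Since $x$ has minimal polynomial $P$ over $\QQ(t)$, the evaluation map at $x$ has kernel $(P)$, so $P\mid Q$; as $\deg(Q/P)\le 1$ we may write $Q=(\alpha X+\beta)P$ for suitable $\alpha,\beta\in\QQ(t)$.

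Next I would equate coefficients of $X^4,X^3,\dots,X^0$ in $Q(X)=(\alpha X+\beta)P(X)$. The top two give $\alpha=3w$ and $\beta=3v+(b_3'-b_2w)/b_3$, exactly as recorded above. Feeding these back into the coefficients of $X^2,X^1,X^0$ and clearing the single denominator $b_3$ produces precisely the $3\times3$ linear system in $(u,v,w)^{T}$ displayed just before the proposition. Solving by Cramer's rule, I would pull the visible factor $b_3$ out of each of the first two columns of the coefficient matrix, which shows its determinant is $b_3^2D$ with $D$ as in~\eqref{eq_ricd}; the same bookkeeping (matched against the compensating $1/b_3$ in the definitions of $A$ and $B$) shows the three numerator determinants equal $b_3^2A$, $b_3^2B$, $b_3^2C$ with $A,B,C$ as in~\eqref{eq_rica}--\eqref{eq_ricc}. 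Hence $u=A/D$, $v=B/D$, $w=C/D$, and multiplying $x'=u+vx+wx^2$ by $D$ gives the Riccati equation~\eqref{riccati}. The division is legitimate because $D$ equals, up to sign, the discriminant of the separable polynomial $P$, so $D\neq 0$.

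What remains is mechanical but should be done carefully: checking row by row that the $X^2,X^1,X^0$ coefficient equations collapse to the stated matrix after the substitutions $\alpha=3w$, $\beta=3v+(b_3'-b_2w)/b_3$ (for instance the $X^2$ equation becomes $3b_3^2u-b_3b_2v+(b_2^2-2b_3b_1)w=b_2b_3'-b_3b_2'$), and expanding the $3\times3$ determinant to confirm the closed form $4b_3b_1^3+4b_2^3b_0+27b_3^2b_0^2-b_2^2b_1^2-18b_3b_2b_1b_0$ in~\eqref{eq_ricd}. A small bonus check is that $A$ and $B$, defined with a $1/b_3$ in front, are genuine polynomials: reducing the determinants in~\eqref{eq_rica}--\eqref{eq_ricb} modulo $b_3$ makes two of their columns proportional to $(b_0,b_1,b_2)^{T}$, so $b_3$ divides them. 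I do not expect a conceptual obstacle here; the one point that must not be skipped is that the divisibility step $Q=(\alpha X+\beta)P$ is carried out over $\QQ(t)$, where $P$ is irreducible and $b_3$ is invertible, which is why the whole argument runs over rational functions and the polynomiality of $A,B,C,D$ appears only at the very end.
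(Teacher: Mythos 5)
Your proposal is correct and follows the same route the paper takes: differentiate the cubic relation, decompose $x'$ in the basis $\{1,x,x^2\}$, obtain the polynomial identity $Q=(\alpha X+\beta)P$ (the paper simply posits this factorization, while you justify it via the minimal-polynomial kernel), equate coefficients to get the paper's $3\times3$ system, and apply Cramer's rule with the $b_3$ factors tracked exactly as you describe. The extra observations you add — $D\neq 0$ because $-D$ is the discriminant, and polynomiality of $A,B$ via reduction mod $b_3$ — are sound and match the paper's own remark about $D$.
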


{\bf Remark.} Notice that $-D$ equals the discriminant of $P(x)$.

The next step is to investigate how this equation changes under the
transform $\phi_{a,\beta}$.

\begin{proposition}\label{prop2}
Let $x\in\QQ((t^{-1}))$ be the solution of the Riccati equation $Dx' = A+Bx+Cx^2$. Then $y = \phi_{a,\beta}(x)$ satisfies the equation $\tilde{D} y' = \tilde{A} + \tilde{B}y + \tilde{C}y^2$ where
$$
\left(\begin{array}{c}
\tilde{A}\\\tilde{B}\\\tilde{C}\\\tilde{D}
\end{array}\right) = \left(\begin{array}{cccc}
0&0&-1&0\\
0&-\beta&-2a&0\\
-\beta^2&-\beta a&-a^2&\beta a'\\
0&0&0&\beta
\end{array}\right)\left(\begin{array}{c}
A\\B\\C\\D
\end{array}\right)
$$
\end{proposition}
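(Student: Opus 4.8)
The plan is to verify the claimed matrix formula by direct computation, exploiting the explicit form of $\phi_{a,\beta}$. Recall from \eqref{def_phi} that $y = \phi_{a,\beta}(x) = \frac{1}{\beta x - a}$, or equivalently $x = \frac{1 + ay}{\beta y}$, so that $\beta x y - ay = 1$. First I would differentiate the relation $y(\beta x - a) = 1$ with respect to $t$. Since $a = a(t)\in\QQ[t]$ and $\beta\in\QQ$ is constant, this gives $y'(\beta x - a) + y(\beta x' - a') = 0$, hence
$$
y' = -\frac{y(\beta x' - a')}{\beta x - a} = -y^2(\beta x' - a') = -\beta y^2 x' + a' y^2.
$$
This is the key identity: it expresses $y'$ in terms of $x'$, $y$, and the known data $a', \beta$.

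Next I would substitute the Riccati equation for $x$, namely $x' = (A + Bx + Cx^2)/D$, into the identity above, obtaining
$$
D y' = -\beta y^2 (A + Bx + Cx^2) + a' y^2 D.
$$
The remaining task is to re-express the right-hand side purely in terms of $y$ (with polynomial coefficients in $t$), using $x = \frac{1+ay}{\beta y}$. Plugging this in, $\beta y^2 x = y(1 + ay)$ and $\beta^2 y^4 x^2 = (1+ay)^2 y^2$, so $\beta y^2 x^2 = \frac{(1+ay)^2}{\beta}$. Therefore
$$
-\beta y^2 A - \beta y^2 B x - \beta y^2 C x^2 = -\beta A y^2 - B y(1+ay) - \frac{C}{\beta}(1+ay)^2,
$$
and multiplying the whole equation through by $\beta$ to clear denominators (which is exactly the normalisation built into $\tilde D = \beta D$), I would collect the coefficients of $1$, $y$, and $y^2$. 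Expanding $(1+ay)^2 = 1 + 2ay + a^2y^2$ gives: the constant term $-C$; the linear term $-\beta B - 2aC$; and the $y^2$ term $-\beta^2 A - \beta a B - a^2 C + \beta a' D$. Reading these off against $\tilde A + \tilde B y + \tilde C y^2 = \beta D y'$ yields precisely
$$
\tilde A = -C,\qquad \tilde B = -\beta B - 2aC,\qquad \tilde C = -\beta^2 A - \beta a B - a^2 C + \beta a' D,\qquad \tilde D = \beta D,
$$
which is the asserted matrix.

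There is essentially no obstacle here; the only points requiring care are bookkeeping ones. One must track that the normalisation $\tilde D = \beta D$ is consistent — i.e. that after clearing the $\beta$ in the denominator of $\frac{C}{\beta}(1+ay)^2$ one multiplies \emph{every} term (including $Dy'$) by $\beta$, which is what produces $\tilde D = \beta D$ rather than $\tilde D = D$. One should also note that the formula is an identity between elements of the field $\QQ(t)$ (or the fraction field of $\QQ[[t^{-1}]]$), valid because $x$, and hence $y$, is a nonzero series, so dividing by $\beta y$ is legitimate; clearing denominators then gives a genuine polynomial Riccati equation for $y$ as claimed. Finally, a sanity check worth recording: the $(4,\cdot)$ row of the matrix reproduces $\tilde D = \beta D$, and the top-left $3\times 3$ block depends on $a$ but not on its derivative except through the single entry $\beta a'$ in position $(3,4)$, matching the fact that $a'$ enters only via the term $a' y^2 D$ above.
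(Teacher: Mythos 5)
Your proof is correct and follows essentially the same route as the paper: differentiate $y = 1/(\beta x - a)$, substitute the Riccati equation for $x'$, and read off the new coefficients. The only cosmetic difference is that you expand everything in powers of $y$ (via $x = (1+ay)/(\beta y)$), whereas the paper keeps both sides over the common denominator $(\beta x - a)^2$ and matches coefficients of powers of $x$; the two bookkeeping choices are equivalent and yield the same three relations $\tilde A = -C$, $\tilde B = -\beta B - 2aC$, $\tilde C = -\beta^2 A - \beta aB - a^2 C + \beta a' D$ together with $\tilde D = \beta D$.
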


\proof Notice that
$$
\beta Dy' = \beta D\frac{d}{dt} \left(\frac{1}{\beta x-a}\right) = \frac{-\beta D(\beta x' - a')}{(\beta x-a)^2} = \frac{-\beta^2(A + Bx + Cx^2) + \beta Da'}{(\beta x-a)^2}.
$$
On the other hand,
$$
A' + B'y + C'y^2 = \frac{A'(\beta x-a)^2 + B' (\beta x-a) + C'}{(\beta x-a)^2}.
$$
Equating the coefficients for $1,x$ and $x^2$ of the numerators of
both expressions gives
$$
A' = -C,\quad B' = -\beta B - 2 a C, \quad C' = -\beta^2 A - \beta aB - a^2C + \beta a' D.
$$
Then the Proposition follows immediately. \endproof

We have shown that every solution of a cubic equation is also a
solution of the corresponding Riccati equation. However, each cubic
equation may have more than one solution $x\in \CC[[t^{-1}]]$. In
fact, by the Newton-Puiseux theorem it always has three solutions,
counting multiplicities, in the extension $\CC[[t^{-1/6}]]$. In
further discussion, we will always assume that all the solutions are
distinct, otherwise the cubic equation can be reduced to one of a
smaller degree. Riccati equations also have multiple solutions. The
aim of the next lemma is to link a given solution of the cubic
equation with a solution of the corresponding Riccati one.

\begin{lemma}\label{lem4}
Let $x_1,x_2,x_3\in\CC[[t^{-1/6}]]$ be three distinct solutions of
the cubic equation $b_0 + b_1x + b_2x^2 + b_3x^3=0$ where
$b_0,b_1,b_2,b_3\in\CC[t]$. Suppose that $\deg x_1\ge 1$ and $\deg
x_2, \deg x_3\le 0$. Then $x_1$ is the only solution of the
corresponding Riccati equation~\eqref{riccati} with the property
$\deg x\ge 1$.
\end{lemma}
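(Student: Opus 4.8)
The plan is to use the differential/algebraic structure relating the three cubic roots to the solutions of the Riccati equation. The key observation is that if $x$ is any solution of the cubic $b_0 + b_1x + b_2x^2 + b_3x^3 = 0$, then by Proposition~\ref{prop1} it satisfies $Dx' = A + Bx + Cx^2$, so \emph{all three} of $x_1, x_2, x_3$ are solutions of the Riccati equation~\eqref{riccati}. Since a first-order Riccati ODE has at most three solutions lying in $\CC[[t^{-1/6}]]$ (they form an orbit under a Möbius action of the constants, or more concretely: the difference of two solutions satisfies a linear equation, and the cross-ratio of any four solutions is constant — so if three distinct solutions are pinned down, a fourth cannot be distinct from all of them), the set $\{x_1, x_2, x_3\}$ is \emph{exactly} the solution set of~\eqref{riccati} inside $\CC[[t^{-1/6}]]$. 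Therefore it suffices to show that $x_1$, the root with $\deg x_1 \ge 1$, is the \emph{unique} one among $x_1, x_2, x_3$ with positive degree; but that is exactly the hypothesis $\deg x_2, \deg x_3 \le 0$. The only real content, then, is the claim that the Riccati equation has no \emph{other} solution of positive degree beyond those three.

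First I would make precise the claim that $x_1, x_2, x_3$ are all the solutions of~\eqref{riccati} in $\CC[[t^{-1/6}]]$. The clean way: suppose $y \in \CC[[t^{-1/6}]]$ also solves $Dy' = A + By + Cy^2$. For each $i$, consider $w_i := \frac{1}{y - x_i}$ (a legitimate element of $\CC[[t^{-1/6}]]$ unless $y = x_i$); subtracting the Riccati equations for $y$ and $x_i$ gives $D(y - x_i)' = B(y-x_i) + C(y^2 - x_i^2) = (B + C(y + x_i))(y - x_i)$, hence $D w_i' = -(B + C(y+x_i)) w_i - C$, a \emph{linear} ODE for $w_i$ once we also track $x_i$. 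Combining the three and eliminating shows $y$ is forced to coincide with one of the $x_i$: indeed the cross-ratio $\rho = \frac{(y-x_1)(x_2-x_3)}{(y-x_3)(x_2-x_1)}$ has logarithmic derivative zero (a direct computation using the Riccati relations for $y, x_1, x_2, x_3$, where $C$, $B$, etc., cancel), so $\rho$ is a constant $c \in \CC$; but the three distinct values $y = x_1, y = x_2, y = x_3$ would force $\rho$ to take three distinct constant values, a contradiction unless $y \in \{x_1, x_2, x_3\}$. Hence the Riccati equation has precisely the solution set $\{x_1, x_2, x_3\}$ in $\CC[[t^{-1/6}]]$, and since by hypothesis only $x_1$ among these has degree $\ge 1$, the lemma follows.

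The main obstacle is making the cross-ratio computation rigorous in the Laurent-series setting: one must check that $y - x_i$, $x_2 - x_1$, $x_2 - x_3$ are all nonzero (genuine units after factoring out the leading term), so that $\rho$ and its logarithmic derivative make sense in the field of fractions of $\CC[[t^{-1/6}]]$ — this uses the standing assumption that the three cubic roots are distinct, together with the fact that $y$ is assumed distinct from the $x_i$ for the sake of contradiction. A small subtlety is the denominator $b_3$ appearing in~\eqref{eq_rica}–\eqref{eq_ricb}: if $b_3$ is not constant one should note that $A, B, C, D$ are nevertheless genuine polynomials (the remark after Proposition~\ref{prop1} identifies $-D$ with the discriminant, and the Cramer determinants for $A, B$ are divisible by $b_3$), so the Riccati equation and all the manipulations above stay within $\CC[t]$-coefficients and the computation is valid. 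Everything else is routine differentiation.
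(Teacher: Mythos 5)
The proposal contains a genuine error at its core, and the error is precisely the place where the lemma's actual content lies.

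You correctly recall that the cross-ratio
$\rho = \dfrac{(y-x_1)(x_2-x_3)}{(y-x_3)(x_2-x_1)}$
is constant for any solution $y$ of the Riccati equation. But you then conclude that this forces $y \in \{x_1, x_2, x_3\}$, on the grounds that those three choices already realize the constant values $0,1,\infty$. That inference is wrong: for every \emph{other} constant $c \in \CC \setminus\{0,1\}$ one obtains a further solution $y_c$ by solving the cross-ratio equation, namely
$$y_c = \frac{x_2(x_1-x_3)-c\,x_1(x_2-x_3)}{(x_1-x_3)-c\,(x_2-x_3)},$$
and since $\CC[[t^{-1/6}]]$ here is a field of Puiseux-type Laurent series (finitely many positive powers allowed), every such $y_c$ lies in it -- the denominator cannot vanish identically because $\frac{x_1-x_3}{x_2-x_3}$ has strictly positive degree and hence is not a constant. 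So the Riccati equation has a one-parameter family of solutions in $\CC[[t^{-1/6}]]$, not just three, and your claimed ``solution set is exactly $\{x_1,x_2,x_3\}$'' is false. This is not a small gap: it is the entire substance of the lemma. The paper's proof takes exactly the formula for $y_c$ above (with the constant called $K$), notes that the numerator has degree at most $\deg x_1 + \deg x_2$ while the denominator has degree exactly $\deg x_1$ (since $\deg x_1 > \deg x_2, \deg x_3$), and concludes $\deg y_c \le \deg x_2 \le 0$. That degree computation is what you need and is what is missing from your argument. Your preliminary observations (that all $x_i$ solve the Riccati equation, that cross-ratios of Riccati solutions are constant, and the remark about $b_3$ dividing the Cramer determinants) are all fine; the failure is the finiteness claim, which is in fact incompatible with the ``Möbius action of the constants'' picture you yourself invoke.
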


\proof Without loss of generality, assume that $\deg x_2\ge \deg
x_3$. Proposition~\ref{prop1} implies that all three series
$x_1,x_2,x_3$ are the solutions of the Ricatti
equation~\eqref{riccati}. Let $x\in\CC[[t^{-1/6}]]$ be any other
solution of~\eqref{riccati}, distinct from the three solutions
above. Then it is well known that
$$
K := \frac{(x_1-x_3)(x_2-x)}{(x_2-x_3)(x_1-x)}
$$
is a constant. Then we have
$$
x = \frac{x_2(x_1-x_3) - Kx_1(x_2-x_3)}{(x_1-x_3)-K(x_2-x_3)}
$$
The degree of the right hand side is at most $\deg x_2 \le 0$.
\endproof

\begin{lemma}\label{lem3}
Let $a\in \QQ[t]$ and $\beta\in\QQ$ such that $\deg a\ge 1$. If the
Riccati equation~\eqref{riccati} has the unique solution
$x\in\QQ((t^{-1}))$ such that $\deg x\ge 1$ then the Riccati
equation $\tilde D y' = \tilde A + \tilde By + \tilde Cy^2$ for $y =
\phi_{a, \beta}(x)$ has at most one solution $y$ with $\deg y\ge 1$.
\end{lemma}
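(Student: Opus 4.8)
The plan is to reduce the statement about the transformed Riccati equation to the hypothesis about the original one, using the bijective nature of $\phi_{a,\beta}$ on solutions together with Lemma~\ref{lem4}. First I would recall that $\phi_{a,\beta}(x) = \frac{1}{\beta x - a}$ is a M\"obius transformation in $x$ with nonzero determinant (since $\beta \neq 0$), hence it permutes the solution set of one Riccati equation bijectively onto the solution set of the transformed one: if $y$ solves $\tilde D y' = \tilde A + \tilde B y + \tilde C y^2$, then $x = \phi_{a,\beta}^{-1}(y) = \frac{1 + a y}{\beta y}$ solves the original Riccati equation $D x' = A + Bx + Cx^2$. This is exactly the content of Proposition~\ref{prop2} read in reverse, so I would invoke that proposition (or simply note that $\phi_{a,\beta}$ is invertible and apply Proposition~\ref{prop2} to $\phi_{a,\beta}^{-1}$).

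The core of the argument is then a degree count. Suppose, for contradiction, that the transformed equation had two distinct solutions $y_1, y_2$ with $\deg y_1, \deg y_2 \ge 1$. Applying $\phi_{a,\beta}^{-1}$ gives two distinct solutions $x_1 = \frac{1 + a y_1}{\beta y_1}$, $x_2 = \frac{1 + a y_2}{\beta y_2}$ of the original Riccati equation. I would compute $\deg x_j = \deg\!\left(\frac{a y_j + 1}{\beta y_j}\right)$: since $\deg a \ge 1$ and $\deg y_j \ge 1$, the numerator has degree $\deg a + \deg y_j \ge 2$ while the denominator has degree $\deg y_j \ge 1$, so $\deg x_j = \deg a \ge 1$. (One must check the numerator's leading term does not vanish, but $a y_j$ has degree strictly larger than the constant $1$, so no cancellation occurs.) Thus both $x_1$ and $x_2$ have degree $\ge 1$ and are distinct — distinctness is preserved because $\phi_{a,\beta}^{-1}$ is injective — contradicting the hypothesis that the original Riccati equation has a unique solution of degree $\ge 1$.

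The main obstacle I anticipate is a bookkeeping subtlety rather than a deep difficulty: one has to be careful that $\phi_{a,\beta}^{-1}$ really does send $\QQ((t^{-1}))$ to itself and that the hypothesis is stated for solutions in $\QQ((t^{-1}))$, not in the larger ring $\CC[[t^{-1/6}]]$. Since $a \in \QQ[t]$, $\beta \in \QQ$, and $y_j \in \QQ((t^{-1}))$ with $\deg y_j \ge 1$ (so $y_j \ne 0$), the expression $\frac{1 + a y_j}{\beta y_j}$ indeed lies in $\QQ((t^{-1}))$, so this causes no trouble. A second minor point is the edge case where the transformed equation has a solution $y$ with $\deg y \ge 1$ but $\beta y - a$ or similar degenerates; but here we are applying the inverse transform, $x = \frac{1+ay}{\beta y}$, whose only obstruction is $y = 0$, already excluded. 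With these checks in place the degree count closes the argument immediately, and no computation with the explicit formulae for $\tilde A, \tilde B, \tilde C, \tilde D$ is needed.
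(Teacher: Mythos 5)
Your proof follows essentially the same route as the paper: pull two degree-$\ge 1$ solutions $y_1,y_2$ of the transformed Riccati equation back through $\phi_{a,\beta}^{-1}$, observe via the degree count $\deg\bigl((1+ay_j)/(\beta y_j)\bigr)=\deg a\ge 1$ (equivalently $\deg(1/y_j+a)=\deg a$ since $\deg(1/y_j)<0$) that both preimages have degree at least one, and conclude $y_1=y_2$ from the uniqueness hypothesis. One cosmetic slip: the inverse map $y\mapsto(1+ay)/(\beta y)$ is not of the form $\phi_{a',\beta'}$, so one cannot literally ``apply Proposition~\ref{prop2} to $\phi_{a,\beta}^{-1}$''; the paper instead just asserts (and one can directly check) that $\phi_{a,\beta}$ bijects the two solution sets, which is all that is needed.
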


\proof One can verify that $\phi_{a,\beta}$ is the bijection between
the solutions of~\eqref{riccati} and $\tilde D y' = \tilde A +
\tilde By + \tilde Cy^2$. Now, if $y_1,y_2$ are both the solutions
of the latter equation with $\deg y_1, \deg y_2\ge 1$ then $\deg
\phi^{-1}_{a,\beta}(y_i) =
\deg\big(\big(\frac{1}{y_i}+a\big)/\beta\big) \ge 1$ for both
$i=1,2$. By assumption, such a series must be unique, i.e.
$\phi_{a,\beta}^{-1} (y_1) = \phi_{a,\beta}^{-1}(y_2)$ or
equivalently $y_1=y_2$.
\endproof

After we verify that the equation~\eqref{riccati} has the unique
solution of degree at least 1, one can readily construct it by
comparing the coefficients at the corresponding powers of $t$
in~\eqref{riccati}.  In the next two lemmata we will cover two cases
that will be needed later. However an analogous approach can be
applied for any quadruple of polynomials $A, B, C$ and $D$.

\begin{lemma}\label{lem1}
Let $A, B, C, D\in \QQ[t]$ be polynomials of degrees $s_a, s_b, s_c$
and $s_d$ respectively such that $A = \sum_{n=0}^\infty r_{a,n}t^n$,
$B = \sum_{n=0}^\infty r_{b,n}t^n$, $C = \sum_{n=0}^\infty
r_{c,n}t^n$, $D = \sum_{n=0}^\infty r_{d,n}t^n$. Here, by
convention, $r_{a,i} = r_{b,i} = r_{c,i} = r_{d,i} = 0$ for all
$i>s_a, i>s_b, i>s_c$ and $i>s_d$ respectively. Assume that $s_d =
s_b + 1$, $s_b > s_c$, $s_b\ge s_a$ and $d r_{d, s_d} \neq r_{b,
s_b}$ for all $d\in\NN$ with $0<d<2(s_b - s_c)$. Let $x$ be a
solution of the equation~\eqref{riccati} of degree at least one.
Then its degree and leading coefficient are computed by the formulae
\begin{equation}\label{eq_lem1}
s_x = \deg x = s_b - s_c, \quad r_{x,s_x} = \frac{r_{d,s_d}(s_b -
s_c) - r_{b,s_b}}{r_{c,s_c}}.
\end{equation}
\end{lemma}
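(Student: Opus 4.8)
The plan is to substitute the (unknown) Laurent series $x = \sum_{n=-\infty}^{s_x} r_{x,n} t^n$ directly into the Riccati equation~\eqref{riccati} and compare coefficients of powers of $t$, starting from the highest and working downwards. The first task is to pin down $s_x$. On the right-hand side, $Cx^2$ has degree $s_c + 2s_x$, $Bx$ has degree $s_b + s_x$, and $A$ has degree $s_a \le s_b$; on the left-hand side, $Dx'$ has degree $s_d + (s_x - 1) = s_b + s_x$ (using $s_d = s_b+1$ and $\deg x' = s_x - 1$ since $s_x \ge 1$). So I would argue that the dominant balance must be between $Dx'$ and $Cx^2$, forcing $s_b + s_x = s_c + 2s_x$, i.e. $s_x = s_b - s_c$. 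One must check that this is consistent — namely that $s_b + s_x \ge s_a + 0$ (true since $s_b \ge s_a$ and $s_x \ge 1$) so that the $A$ term does not dominate, and that $s_x \ge 1$, which holds because $s_b > s_c$ by hypothesis; also $Bx$ has the same degree $s_b + s_x$ as the two leading terms, so all three of $Dx'$, $Bx$, $Cx^2$ contribute at the top degree, while $A$ is strictly lower.

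Next I would extract the leading coefficient. At degree $t^{s_b + s_x}$ the equation reads
\begin{equation*}
r_{d,s_d}\,(s_x)\,r_{x,s_x} = r_{b,s_b}\,r_{x,s_x} + r_{c,s_c}\,r_{x,s_x}^2,
\end{equation*}
since the $t^{s_b+s_x}$-coefficient of $Dx'$ is $r_{d,s_d}\cdot s_x\cdot r_{x,s_x}$ (the derivative brings down the exponent $s_x$), that of $Bx$ is $r_{b,s_b} r_{x,s_x}$, that of $Cx^2$ is $r_{c,s_c} r_{x,s_x}^2$, and $A$ does not reach this degree. Dividing by $r_{x,s_x}$ (nonzero, as $x$ genuinely has degree $s_x$) and by $r_{c,s_c}$ (nonzero, since $C$ has degree exactly $s_c$), I get
\begin{equation*}
r_{x,s_x} = \frac{r_{d,s_d}\,s_x - r_{b,s_b}}{r_{c,s_c}} = \frac{r_{d,s_d}(s_b - s_c) - r_{b,s_b}}{r_{c,s_c}},
\end{equation*}
which is exactly~\eqref{eq_lem1}. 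This already establishes the stated formulae; the hypotheses $s_d = s_b+1$, $s_b > s_c$, $s_b \ge s_a$ are precisely what makes the degree count work out.

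The remaining hypothesis — that $d\, r_{d,s_d} \ne r_{b,s_b}$ for all integers $d$ with $0 < d < 2(s_b - s_c)$ — is not needed for the leading term but is what guarantees the solution of degree $s_x$ is \emph{unique} (and that the recursive determination of the lower coefficients never stalls). I would remark on this: solving for $r_{x, s_x - m}$ for $m = 1, 2, \ldots$ by comparing coefficients of $t^{s_b + s_x - m}$ yields a linear equation whose coefficient of $r_{x,s_x-m}$ is $r_{d,s_d}(s_x - m) - r_{b,s_b} - 2 r_{c,s_c} r_{x,s_x}$; substituting the value of $r_{x,s_x}$ just found, this equals $r_{d,s_d}(s_x - m) - r_{b,s_b} - 2(r_{d,s_d} s_x - r_{b,s_b}) = -r_{d,s_d}(s_x + m) + r_{b,s_b}$, which is nonzero exactly when $(s_x + m) r_{d,s_d} \ne r_{b,s_b}$, i.e. when $d := s_x + m$ avoids the bad set; since $m \ge 1$ we have $s_x < d$, and $d < 2s_x = 2(s_b - s_c)$ is the relevant range. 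So each lower coefficient is uniquely and recursively determined, confirming both that a degree-$s_x$ solution exists and that it is unique. The main (and only real) obstacle is bookkeeping: being careful that the degree comparison at the top is genuinely a three-term balance and that $A$ is strictly subdominant, and tracking which hypothesis is used where; there is no deep difficulty, just routine power-series manipulation.
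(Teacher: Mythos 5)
Your computation of the leading coefficient (given $s_x = s_b - s_c$) is correct and matches the paper's. But the degree determination has a genuine gap: you assert that ``the dominant balance must be between $Dx'$ and $Cx^2$, forcing $s_b + s_x = s_c + 2s_x$,'' yet this is exactly the step that needs justification. If $1 \le s_x < s_b - s_c$, then $Cx^2$ has degree $s_c + 2s_x < s_b + s_x$ and drops out of the top-degree balance entirely; the top degree $s_b + s_x$ is then attained only by $Dx'$ and $Bx$, which \emph{do} balance against each other (with $A$ still subdominant). Comparing leading coefficients in that situation gives $s_x\, r_{d,s_d}\, r_{x,s_x} = r_{b,s_b}\, r_{x,s_x}$, i.e.\ $s_x\, r_{d,s_d} = r_{b,s_b}$ with $0 < s_x < s_b - s_c$ --- and it is precisely the hypothesis $d\, r_{d,s_d} \ne r_{b,s_b}$ for $0 < d < 2(s_b - s_c)$ (applied with $d = s_x$) that forbids this. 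The paper's proof carries out this case split explicitly: first $s_x > s_b - s_c$ is impossible because $Cx^2$ would strictly dominate, then $s_x < s_b - s_c$ is impossible because of the hypothesis, leaving $s_x = s_b - s_c$.

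Consequently your remark that the hypothesis ``is not needed for the leading term but is what guarantees the solution of degree $s_x$ is unique'' misattributes its role: within the lemma itself the hypothesis is what pins down the degree $s_x$ in the first place (the relevant range being $0 < d < s_b - s_c$). Your observation about the recursive determination of the lower coefficients of the polynomial part --- that the linear coefficient of $r_{x, s_x - m}$ collapses to $r_{b,s_b} - r_{d,s_d}(s_x + m)$ after substituting the leading coefficient, so that nondegeneracy corresponds to $d = s_x + m$ avoiding the bad set --- is a nice side remark and essentially what the paper alludes to after the lemma, but it is not a substitute for excluding the smaller-degree solutions. To repair the proof, add the case $1 \le s_x < s_b - s_c$ before the leading-coefficient extraction and invoke the hypothesis there.
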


\proof Let $s_x$ be the degree of $x$, i.e. $x = \sum_{n=-s}^\infty
r_{x, -n}t^{-n}$ with $r_{x,s}\neq 0$. Then the degrees of the terms
$Dx', A, Bx$ and $Cx^2$ are $s_d + s_x-1$, $s_a$, $s_b + s_x$ and
$s_c+2s_x$ respectively. In order for $r_{x,s}$ to be nonzero, the
maximum of those four values need to be achieved at at least two of
them. For $s_x>s_b-s_c$ we have that
$$
s_c + 2s_x > s_d + s_x-1 = s_b + s_x > s_a
$$
which is a contradiction. Therefore $s_x\le s_b-s_c$.

Suppose now that $0<s_x<s_b - s_c$. Then the comparison of the
coefficients at $t^{s+s_b}$ in~\eqref{riccati} gives $s r_{d,s_d}
r_{x,s} = r_{b,s_b} r_{x,s}$. But that contradicts the conditions of
the lemma and the fact $r_{x,s}\neq 0$.

Hence we have $s_x = s_b - s_c$. Comparison of the leading
coefficients now gives $s_xr_{d, s_d} r_{x,s_x} = r_{b,s_b}
r_{x,s_x} + r_{c,s_c} r_{x,s_x}^2$ and~\eqref{eq_lem1} immediately
follows.

%Next, we compare the coefficients at $t^{s+s+b}$ in\eqref{riccati}
%for $s = s_x-1, s_x-2, \ldots$. Each of those comparisons allows us
%to compute one coefficient of $x$, starting from $r_{x,s_x-1}$ and
%going downwards. More precisely, both, left and right hand sides
%of~\eqref{riccati} give us linear expressions for $r_{x,s}$. On the
%left hand side the coefficient at $r_{x,s}$ is $sr_{d,s_d}$, while
%on the right hand side it is
%$$
%r_{b,s_b} + 2r_{c,s}r_{x,s_x} = 2r_{c,s}\cdot \frac{r_{d,s_d}(s_b -
%s_c) - r_{b,s_b}}{r_{c,s_c}} + r_{b,s_b} = 2(s_b-s_c)r_{d,s_d} -
%r_{b,s_b}.
%$$
%Notice that the Lemma's condition implies the
%$r_{d,s_d}(2(s_b-s_c)-s) \neq r_{b,s_b}$ for all $0<s<s_x$ and hence
%the equation has only one solution $r_{x,s}$.
\endproof

Note that we can continue the comparison of the coefficients at $t^{s+s_b}$
in~\eqref{riccati} for $s = s_x-1, s_x-2, \ldots$. That will allow us to
identify all the coefficients of the polynomial part of $x$. However, this
process is quite technical and not very enlightening. Therefore we omit it in
this paper.

\begin{lemma}\label{lem2}
With the same notation as in Lemma~\ref{lem1}, assume that
$s_d>s_b+1$, $s_d>s_c+1$ and $s_d>s_a$. Assume that the
equation~\eqref{riccati} has the unique solution $x$ of degree at
least 1. then its degree and leading coefficient are computed by the
formulae
\begin{equation}\label{eq_lem2}
s_x = s_d-s_c-1,\quad r_{x,s_x} =
\frac{(s_d-s_c-1)r_{d,s_d}}{r_{c,s_c}}.
\end{equation}
\end{lemma}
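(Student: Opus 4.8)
The plan is to mimic the proof of Lemma~\ref{lem1}, but now with the dominant term being $Dx'$ rather than $Bx$. First I would write $x = \sum_{n=-s_x}^{\infty} r_{x,-n} t^{-n}$ with $r_{x,s_x}\neq 0$ (and $s_x\ge 1$ by hypothesis) and record the degrees of the four terms appearing in~\eqref{riccati}: $\deg(Dx') = s_d + s_x - 1$, $\deg(A) = s_a$, $\deg(Bx) = s_b + s_x$, and $\deg(Cx^2) = s_c + 2s_x$. The hypotheses $s_d > s_b + 1$, $s_d > s_c + 1$, $s_d > s_a$ are exactly what is needed to ensure that, for the candidate value $s_x = s_d - s_c - 1$, the term $Dx'$ has the same degree as $Cx^2$ and these two strictly dominate $Bx$ and $A$. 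One checks: with $s_x = s_d - s_c - 1$ one has $s_d + s_x - 1 = s_c + 2s_x$; and $s_d + s_x - 1 - (s_b + s_x) = s_d - s_b - 1 > 0$, while $s_d + s_x - 1 - s_a = s_d - 1 + s_x - s_a$, which is positive once $s_x \ge 1$ and $s_d > s_a$ (and if $s_x$ were even larger the gap only widens). So the plan is to use this degree bookkeeping to pin down $s_x$.

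The key steps, in order, are as follows. (i) Show $s_x \le s_d - s_c - 1$: if $s_x > s_d - s_c - 1$ then $s_c + 2s_x > s_d + s_x - 1 \ge$ (the other two degrees), so $Cx^2$ is the unique strictly-largest-degree term, forcing its leading coefficient $r_{c,s_c} r_{x,s_x}^2$ to vanish, contradicting $r_{c,s_c}\neq 0$, $r_{x,s_x}\neq 0$. (ii) Show $s_x \ge s_d - s_c - 1$: suppose $1 \le s_x < s_d - s_c - 1$. Then $s_d + s_x - 1 > s_c + 2s_x$, and also $s_d + s_x - 1 > s_b + s_x$ and $> s_a$ by the standing hypotheses, so now $Dx'$ is the unique dominant term; comparing coefficients at $t^{s_d + s_x - 1}$ gives $s_x\, r_{d,s_d}\, r_{x,s_x} = 0$, again impossible since $s_x \ge 1$, $r_{d,s_d}\neq 0$, $r_{x,s_x}\neq 0$. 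Hence $s_x = s_d - s_c - 1$. (iii) Finally, compare the leading coefficients at degree $s_d + s_x - 1 = s_c + 2s_x$: the left side contributes $-s_x\, r_{d,s_d}\, r_{x,s_x}$ from $Dx'$ (note $x' = \sum n\, r_{x,-n} t^{-n-1}$, whose leading term is $-s_x r_{x,s_x} t^{s_x - 1}$), the right side contributes $r_{c,s_c} r_{x,s_x}^2$ from $Cx^2$, while $A$ and $Bx$ do not reach this degree. Wait --- I should be careful about signs: moving everything to one side of $Dx' = A + Bx + Cx^2$, the balance at the top degree is $-s_x r_{d,s_d} r_{x,s_x} = r_{c,s_c} r_{x,s_x}^2$, which after cancelling $r_{x,s_x}$ yields $r_{x,s_x} = -s_x r_{d,s_d} / r_{c,s_c} = -(s_d - s_c - 1) r_{d,s_d} / r_{c,s_c}$.

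This differs in sign from~\eqref{eq_lem2} as I have set it up, so the main thing to get right is the sign convention for the derivative of a Laurent series in $t^{-1}$: since $\frac{d}{dt} t^{-n} = -n t^{-n-1}$, the leading coefficient of $x'$ is indeed $-s_x r_{x,s_x}$, and a careful rereading of the statement shows the intended formula must be consistent with that (possibly the paper's $r_{d,s_d}$ or the degree valuation carries a sign, or I am simply to match their stated form). So the one genuine obstacle is purely clerical: tracking the sign coming from differentiation and reconciling it with formula~\eqref{eq_lem2} as printed; the degree argument itself is a direct and robust repeat of the dominant-balance reasoning already used in Lemma~\ref{lem1}. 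I expect no difficulty beyond that bookkeeping, since uniqueness of the degree-$\ge 1$ solution is assumed outright, so I need not worry about extraneous solutions of lower degree.
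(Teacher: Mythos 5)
Your degree bookkeeping is exactly the paper's argument: eliminate $s_x > s_d - s_c - 1$ because $Cx^2$ would be the unique dominant term, eliminate $1 \le s_x < s_d - s_c - 1$ because $Dx'$ would be, and then read off the leading coefficient from the balance $Dx' \leftrightarrow Cx^2$. So the structure is right and mirrors Lemma~\ref{lem1} as intended.

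But the sign worry you flag in step (iii) is not a printing ambiguity in the paper --- it is a slip in your own computation. With $x = \sum_{n=-s_x}^{\infty} r_{x,-n} t^{-n}$, the leading term corresponds to $n = -s_x$ and is $r_{x,s_x} t^{s_x}$ with $s_x \ge 1$, i.e. a genuine \emph{positive} power of $t$. Differentiating, $\frac{d}{dt}\bigl(r_{x,s_x} t^{s_x}\bigr) = s_x\, r_{x,s_x}\, t^{s_x-1}$, with a plus sign. Equivalently: in the general formula $\frac{d}{dt} t^{-n} = -n\, t^{-n-1}$, plugging $n = -s_x$ gives coefficient $-(-s_x) = +s_x$, not $-s_x$ as you wrote. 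So the top-degree balance is
$$
s_x\, r_{d,s_d}\, r_{x,s_x} = r_{c,s_c}\, r_{x,s_x}^2,
$$
and cancelling $r_{x,s_x}\ne 0$ gives $r_{x,s_x} = \dfrac{s_x\, r_{d,s_d}}{r_{c,s_c}} = \dfrac{(s_d-s_c-1)\, r_{d,s_d}}{r_{c,s_c}}$, exactly formula~\eqref{eq_lem2} with no sign to reconcile. Once that arithmetic is fixed, your proof coincides with the paper's.
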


\proof The proof is analogous to that of Lemma~\ref{lem1}. We notice
that if $\deg x = s_x>s_d-s_c-1$ the degrees of each term in $Dx',
A, Bx$ and $Cx^2$ compare as follows:
$$
s_c+2s_x > s_d+s_x-1>s_b+s_x; \quad s_c+2s_x > s_a
$$
which is impossible as the maximum must be attained at at least two
of those values. For $s<s_d-s_c-1$, we get
$$
s_d+s-1 > s_c + 2s,\quad s_d+s-1>s_a,\quad s_d+s-1 > s_b+s.
$$
That is again impossible. Hence the only possibility is $s_x =
s_d-s_c-1$.

Now, comparing the leading terms gives us $s_xr_{d,s_d}r_{x,s_x} =
r_{c,s_c}r_{x,s_x}^2$ and~\eqref{eq_lem2} follows. \endproof

We end this section by showing that all the cubic polynomials
\textnumero~1--6 indeed have only one solution $x$ of positive
degree. This is the straightforward  implication of the following

\begin{lemma}\label{lem17}
Suppose that $b_0, b_1,b_2,b_3\in\QQ[t]$ satisfy $\deg(b_3)=0$,
$\deg(b_2)=1$ and $\deg(b_0)$, $\deg(b_1)\le 1$. then the equation
$b_3x^3+b_2x^2+b_1x+b_0=0$ has exactly one solution of strictly
positive degree.
\end{lemma}

\proof Notice that if $\deg x> 0$ then $\deg(b_0)$ and $\deg(b_1x)$
are strictly smaller than $\deg(b_2x^2)$. Suppose $\deg(x) = d>0$.
Then the degrees of the monomials $b_3x^3$ and $b_2x^2$ are $3d$ and
$1+2d$ respectively. Since the degrees of at least two monomials
among  $b_3x^3, b_2x^2, b_1x, b_0$ must coincide, we derive $d=1$.

Let $x = \sum_{n=-1}^\infty r_{-n} t^{-n}$ and $b_2 = \beta_1t +
\beta_2$. Then expanding the equation $b_3x^3+b_2x^2+b_1x+b_0=0$ and
comparing the terms at $t^3$ gives $r_1^2(b_3r_1 + \beta_1)=0$ or
$r_1 = -\beta_1/b_3$. Hence the term $r_1$ is uniquely determined.

Next, we consequently compare the coefficients at $t^{3-k}$ where
$k=1,2\ldots$ Each time we get a polynomial in $r_1, \ldots,
r_{1-k}$ where all terms except $r_{1-k}$ are already determined in
the previous steps. The terms involving $r_{1-k}$ are
$3b_3r_1^2r_{1-k} + 2\beta_1r_1r_{1-k} = -r_1\beta_1 r_{1-k}$. In
other words, we get a linear equation in $r_{1-k}$ which has the
unique solution. This implies that each term $r_1, r_0,\ldots$ of
$x$ is uniquely determined and therefore there is exactly one
solution $x$ with $\deg x>0$. \endproof

\subsection{The solution of $3x^3 - 3tx^2 - 9x + t = 0$}

As the first step, we use Proposition~\ref{prop1} to transfer this
equation to Riccati one. However, there is another, less technical
way to derive the Riccati equation for this particular cubic
equation: by differentiating $y = (1+\tau^{-1})^{1/3}$ one can
derive the Riccati equation for $y$ and then apply the transform $x
= \frac{1+y}{1-y}$ and $t = \frac{-\tau - 3}{6}$ which maps the
series $y(\tau)$ to $x(t)$. We leave the details to an interested
reader and proceed with the
formulae~\eqref{eq_ricd}--~\eqref{eq_ricc}:

$$D = -4\cdot 3^7 - 4\cdot 3^3t^4 +3^5t^2-3^6t^2-2\cdot 3^6t^2 =
-108(t^2+9)^2;$$
$$A = \frac13 \left(\begin{array}{ccc}
-3&-3t&-3t^2\\
0&18&18t\\
9&3t&9t^2+54
\end{array}\right) = -108(t^2+9);
$$
$$B = \frac13 \left(\begin{array}{ccc}
-9&-3&-3t^2\\
-6t&0&18t\\
9&9&9t^2+54
\end{array}\right) = 0;
$$
$$C = \left(\begin{array}{ccc}
-9&-3t&-3\\
-6&18&0\\
9&3t&9
\end{array}\right) = -108(t^2+9).
$$
We now divide both sides of the equation $Dx' = A+Bx+Cx^2$ by
$-108(t^2+9)$ and end up with the Riccati equation:
\begin{equation}\label{eq13}
(t^2+9)x' = 1+x^2.
\end{equation}
We already know that the initial cubic equation has exactly one
solution $x$ with $\deg x\ge 1$ and the other two solutions $y,z$
satisfy $\deg y, \deg z\le 0$. Therefore by Lemma~\ref{lem4}, $x$ is
the only solution of the Riccati equation with positive degree.
Hence we can freely apply Lemmata~\ref{lem1} and~\ref{lem2} for this
and subsequent equations.

We have $s_d=2$, $s_a=0$, $s_b = -\infty$ and $s_c=0$, therefore
Lemma~\ref{lem2} is applicable. Together with comparing the
coefficients of~\eqref{eq13} at $t$, it gives $[x] = t$ and $x_1(t)
= \frac{1}{x-t} = \phi_{t,1}(x)$.
%One can use the formulae~\eqref{eq_ricd} --~\eqref{eq_ricc} to
%transfer this equation to Riccati one. However, it is quicker to
%derive that equation directly. Namely, we have
%$$
%x = \left(1+\frac1t\right)^{1/3} \Longrightarrow\quad  x' = \frac13
%\left(1+\frac1t\right)^{-2/3} \left(-\frac1{t^2}\right) =
%\frac{-x}{3t(t+1)}.
%$$
%Therefore, the Riccati equation for $x(t)$ is $3t(t+1)x' = -x$.
%
%One directly verifies that $x(t) = 1 + \sum_{n=1}^\infty b_{-n}
%t^{-n}$. Therefore $x_1(t) = \frac{1}{x-1} = \phi_{1,1}(x)$.
Proposition~\ref{prop2} states that $x_1(t)$ satisfies the equation
$D_1 x_1' = A_1 + B_1x_1 + C_1x_1^2$ where
$$
\left(\begin{array}{c} A_1\\B_1\\C_1\\D_1
\end{array}\right) = \left(\begin{array}{cccc}
0&0&-1&0\\
0&-1&-2t&0\\
-1&-t&-t^2&1\\
0&0&0&1
\end{array}\right)\left(\begin{array}{c}
1\\0\\1\\t^2+9
\end{array}\right) = \left(\begin{array}{c}
-1\\-2t\\8\\t^2+9
\end{array}\right).
$$
%Next, we know that $d_1 = \deg x_1 \ge 1$. If $d_1\ge 2$ then the
%highest degree among three terms of the equation
%\begin{equation}\label{eq1}
%3t(t+1)x_1' = x_1 + x_1^2
%\end{equation}
%is $2d_1$. Comparing the coefficients for $t^{2d_1}$ in this
%equation leads to $b_{d_1} = 0$ --- a contradiction. Therefore $\deg
%x_1 = 1$ and $[x_1(t)] = b_1t + b_0$. Comparing the coefficients at
%$t^2$ of the equation~\eqref{eq1} gives $3b_1 = b_1^2$ or (since
%$b_1\neq 0$) $b_1=3$. Comparing the coefficients at $t$ of this
%equation gives $9 = 3 + 6b_0$ or $b_0 = 1$. Hence, $x_2(t) =
%\phi_{3t+1, 1}(x_1)$.
%
%We apply Proposition~\ref{prop2} to compute the terms $A_2, B_2,
%C_2$ and $D_2$ of the Ricatti equation for $x_2(t)$:
%$$
%\left(\begin{array}{c} A_2\\B_2\\C_2\\D_2
%\end{array}\right) = \left(\begin{array}{cccc}
%0&0&-1&0\\
%0&-1&-2(3t+1)&0\\
%-1&-(3t+1)&-(3t+1)^2&3\\
%0&0&0&1
%\end{array}\right)\left(\begin{array}{c}
%0\\1\\1\\3t(t+1)
%\end{array}\right) = \left(\begin{array}{c}
%-1\\-3(2t+1)\\-2\\3t(t+1)
%\end{array}\right).
%$$
%Notice that $3t(t+1)= \frac34 ((2t+1)^2 - 1)$. We make a change of a
%variable $s = 2t+1$ and get that the equation for $x_2(s)$ is
%$$
%\left(\frac38 s^2 - \frac38\right)x_2' = -1 - 3sx_2 - 2x_2^2.
%$$

Notice that $A_1$ and $B_1$ are constants, $C_1$ is an odd linear
function and $D_1$ is an even quadratic function. We will show that
the solution $x$ of any Riccati equation with these properties
enjoys an easily described continued fraction expansion. This result
may be of an independent interest.

\begin{proposition}\label{prop3}
Let $u_1,u_2,u_3,v_1,v_2\in \QQ$ be such that
\begin{enumerate}
\item $u_2, u_3, v_1\neq 0$;
\item $iv_1\neq u_2$ for all $i\in \NN$;
\item $(i^2 v_1 - iu_2)v_2 \neq u_1u_3$ for all $i\in \NN$.
\end{enumerate}
Let  $x\in \QQ((t^{-1}))$ be a solution of the equation $(v_1t^2 +
v_2)x' = u_1 + u_2tx + u_3x^2$ such that $\deg x\ge 1$. Then $x$ is
given by the formula
\begin{equation}\label{eq_prop3}
x(t) = \bigk \left[
\begin{array}{ccc}
u_3&\beta_1&\beta_2\\
\alpha_0t&\alpha_1 t&\alpha_2t
\end{array} \cdots
\right]
\end{equation}
where $\alpha_i = (2i+1)v_1 - u_2$ and $\beta_i = (i^2v_1 - iu_2)v_2
- u_1u_3$. %In particular, $x(t)$ is half-periodic.
\end{proposition}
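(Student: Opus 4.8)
The plan is to run the continued--fraction algorithm directly on the renormalised series $z_0:=u_3x$ and to verify that the partial quotients it produces are exactly those claimed. Multiplying the equation for $x$ by the nonzero constant $u_3$ shows that $z_0$ satisfies the monic Riccati equation $(v_1t^2+v_2)z_0'=u_1u_3+u_2t\,z_0+z_0^2$. Set $\alpha_k:=(2k+1)v_1-u_2$ and $\beta_k:=(k^2v_1-ku_2)v_2-u_1u_3$ for $k\ge 0$ (so $\beta_0=-u_1u_3$); by hypotheses 1--3 one has $v_1,u_2,u_3\ne 0$, $\alpha_k\ne 0$ for all $k\ge 0$, and $\beta_k\ne 0$ for all $k\ge 1$. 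Define the series $z_k$ inductively by $z_{k+1}:=\beta_{k+1}/(z_k-[z_k])$.

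The heart of the proof is the assertion, proved by induction on $k$, that for every $k\ge 0$ the series $z_k$ is well defined, has degree $1$ with $[z_k]=\alpha_k t$, and satisfies
$$
(v_1t^2+v_2)\,z_k'=-\beta_k+(u_2-2kv_1)t\,z_k+z_k^2 .
$$
The base case $k=0$ is the monic equation above; that $\deg z_0=1$ with leading coefficient $\alpha_0$ follows from Lemma~\ref{lem1} applied with $A=u_1u_3$, $B=u_2t$, $C=1$, $D=v_1t^2+v_2$ (so $s_a=0,s_b=1,s_c=0,s_d=2$), whose hypotheses reduce to $v_1\ne 0$, $u_2\ne 0$ and $v_1\ne u_2$; one further coefficient comparison at $t^1$ forces the constant term of $z_0$ to vanish because $2v_1-u_2\ne 0$. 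For the inductive step, assume the assertion for $k$. If $z_k$ were a polynomial it would equal $\alpha_k t$, and comparing constant terms in its Riccati equation would give $\beta_k=-v_2\alpha_k$, hence $\beta_{k+1}=\beta_k+v_2\alpha_k=0$, which is impossible; so $z_k-[z_k]$ is a nonzero series of degree $\le-1$ and $z_{k+1}$ is well defined of degree $\ge1$. Feeding the equation for $z_k$ into Proposition~\ref{prop2} with $a=\alpha_k t$ and $\beta=1$ produces the Riccati equation of $\phi_{\alpha_kt,1}(z_k)=1/(z_k-\alpha_k t)$, and rescaling by the nonzero constant $\beta_{k+1}$ gives that of $z_{k+1}$. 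The computation simplifies because of two identities forced by the definitions of $\alpha_k$ and $\beta_k$: in the transformed quadratic coefficient the $t^2$--term is $\alpha_k\big((2k+1)v_1-u_2-\alpha_k\big)=0$, and the surviving constant is $\beta_k+v_2\alpha_k=\beta_{k+1}$, while the transformed linear coefficient is $(u_2-2(k+1)v_1)t$. After the rescaling this is precisely the asserted equation with $k+1$ in place of $k$; Lemma~\ref{lem1} together with the vanishing of the constant term (this time using $(2k+2)v_1-u_2\ne 0$) then gives $[z_{k+1}]=\alpha_{k+1}t$, completing the induction.

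Finally, from $[z_k]=\alpha_k t$ and the definition of $z_{k+1}$ we get $z_k=\alpha_k t+\beta_{k+1}/z_{k+1}$ for every $k$, so unwinding these relations exhibits
$$
u_3x=z_0=\bigk\left[\begin{array}{ccc}1&\beta_1&\beta_2\\ \alpha_0t&\alpha_1t&\alpha_2t\end{array}\cdots\right],
$$
a continued fraction that converges because every partial quotient $\alpha_k t$ has degree $1\ge1$. Dividing by $u_3$, i.e. prepending the leading factor $\beta_0=u_3$, yields exactly~\eqref{eq_prop3}. Note that no uniqueness statement for solutions of the Riccati equation is needed, since we track the specific series $z_0=u_3x$ and its iterates throughout.

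The only genuinely laborious point is the inductive step: one has to push the whole quadruple $(\tilde A,\tilde B,\tilde C,\tilde D)$ through the matrix of Proposition~\ref{prop2} and through the rescaling, and check that the quadratic part of $\tilde C$ cancels on the nose and that $\tilde C$ collapses to the constant $\beta_{k+1}$. Everything else --- the degree and leading--coefficient bookkeeping via Lemma~\ref{lem1}, the impossibility of termination, and the convergence of the resulting continued fraction --- is routine once the non-degeneracy hypotheses 1--3 are matched against the indices $2k$, $2k+1$ and $2k+2$ that appear throughout the argument.
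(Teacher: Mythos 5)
Your proof follows essentially the same strategy as the paper's: run the continued-fraction algorithm directly, read off each partial quotient via Lemma~\ref{lem1}, transport the Riccati equation via Proposition~\ref{prop2}, and exploit the two cancellations $\alpha_k\bigl((2k+1)v_1-u_2-\alpha_k\bigr)=0$ and $\beta_k+v_2\alpha_k=\beta_{k+1}$. Your monic normalisation $z_k=\beta_k x_k$ is just a cleaner bookkeeping of the paper's step of applying $\phi_{a_i,\beta_i}$ and then dividing by $\beta_i$, and your explicit observation that no $z_k$ can be a polynomial (so the expansion cannot terminate) tidies up a point the paper leaves implicit. One small index slip: the vanishing of the constant term of $z_{k+1}$ requires $(2k+4)v_1\ne u_2$, not $(2k+2)v_1\ne u_2$ as written; under hypothesis~2 both hold, so nothing breaks.
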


\proof Let $x(t)$ have the continued fraction expansion $\bigk
\left[
\begin{array}{ccc}
u_3&\beta_1&\beta_2\\
a_0&a_1&a_2
\end{array} \cdots
\right] $ and $x_i(t)$ be the corresponding quotients $\bigk \left[
\begin{array}{ccc}
\beta_i&\beta_{i+1}&\beta_{i+2}\\
a_i&a_{i+1}&a_{i+2}
\end{array} \cdots
\right]$. Denote $u_3$ by $\beta_0$. Then by~\eqref{def_phi}, for
all $i\in \ZZ_{\ge 0}$, $\phi_{a_i, \beta_i} (x_i(t)) = x_{i+1}(t)$.
Let $x_i(t)$ satisfy the Riccati equation $D_i x_i' = A_i + B_ix_i +
C_i x_i^2$. We use Proposition~\ref{prop2} and Lemma~\ref{lem1} to
prove by induction that for all $i\in \NN$ one has $D_i = D = v_1
t^2 + v_2$, $A_i = -1$ ($i>0$), $B_i = (u_2 - 2i v_1)t $, $C_i =
(i^2v_1 - iu_2)v_2 - u_1u_3$ and $a_i = \alpha_i t$.

We start with $x(t)$. Since $\deg x>0$ and $u_2\neq v_1$, all the
conditions of Lemma~\ref{lem1} are satisfied and it implies that
$\deg x = 1$ and the leading coefficient $b_1$ of $x$ is $\frac{v_1
- u_2}{u_3}$. Comparing the coefficients for $t$ in the Riccati
equation gives that $b_0=0$ and hence $[x] = \frac{v_1 -
u_2}{u_3}t$. Since $v_1\neq u_2$ and $u_3\neq 0$, we can take $a_0 =
(v_1-u_2) t$ and $\beta_0 = u_3$. Then Proposition~\ref{prop2} gives
$$
\begin{array}{rl}
\left(\begin{array}{c} A_1\\B_1\\C_1\\D_1
\end{array}\right) &= \left(\begin{array}{cccc}
0&0&-1&0\\
0&-u_3&-2(v_1-u_2)t&0\\
-u_3^2&-u_3(v_1-u_2)t&-(v_1-u_2)^2t^2&(v_1-u_2)u_3\\
0&0&0&u_3
\end{array}\right)\left(\begin{array}{c}
u_1\\u_2t\\u_3\\v_1t+v_2
\end{array}\right)\\[6ex]
& = \left(\begin{array}{c} -u_3\\u_3(u_2 - 2v_1)t\\u_3((v_1-u_2)v_2
- u_1u_3)\\u_3(v_1t^2 + v_2)
\end{array}\right).
\end{array}
$$
By dividing all terms of the resulting Riccati equation by $u_3$ we
achieve the base of induction.

Now assume that the assumptions are satisfied for $i\in\NN$ and
verify them for $i+1$. Since the equation $(v_1t^2 + v_2)x'_i = -1 +
(u_2-2iv_1)t x_i + ((i^2 v_1 - iu_2)v_2 - u_1u_3)x_i^2$ for $x_i$ is
of the same form as for the initial series $x$ and all three
conditions for its coefficients are satisfied, we use the same
arguments as in the proof of the base of induction and get $a_i =
(v_1 - (u_2 - 2iv_1))t = ((2i+1)v_1 - u_2)t$ and $\beta_i = (i^2 v_1
- iu_2)v_2 - u_1u_3$. Then Proposition~\ref{prop2} gives that the
coefficients of the Riccati equation for $\phi_{a_i, \beta_i}(y_i)$
are: $A_{i+1}=-1$, $B_{i+1} = (u_2 - 2iv_1 - 2v_1)t = (u_2 -
2(i+1)v_1)t$; $C_{i+1} = (v_1 - (u_2-2iv_1))v_2 + (i^2 v_1 -
iu_2)v_2 - u_1u_3 = ((i+1)^2 v_1 - (i+1)u_2)v_2 - u_1u_3$. The
inductional step is verified.
\endproof

We now substitute the values $u_1=-1, u_2=-2, u_3=8, v_1=1, v_2=9$
into Proposition~\ref{prop3} and get $a_{i+1} = (2i+1+2)t =
(2(i+1)+1)t$, $\beta_{i+1} = 9(i^2+2i)+8 = 9(i+1)^2-1$. The
continued fraction \textnumero~1 is established.

\subsection{The solution of $3x^3-3tx^2-3ax+at=0$}\label{subsec32}

As in the previous case, we first transfer the algebraic equation
for $x$ into the Riccati one. Equations~~\eqref{eq_ricd}
--~\eqref{eq_ricc} give us
$$D = -4\cdot 3^4a^3-4\cdot 3^3at^4 + 3^5a^2t^2-3^4a^2t^2-2\cdot 3^5 a^2t^2 =
-108(3a^3+3a^2t^2+at^4)
$$
$$A = \frac13 \left(\begin{array}{ccc}
-3a&-3at&-3at^2\\
0&6a&0\\
9&3t&9t^2+18a
\end{array}\right) = -108a^3;
$$
$$B = \frac13 \left(\begin{array}{ccc}
-3a&-3a&-3at^2\\
-6t&0&0\\
9&9&9t^2+18a
\end{array}\right) = -108 a^2t;
$$
$$C = \left(\begin{array}{ccc}
-3a&-3at&-3a\\
-6t&6a&0\\
9&3t&9
\end{array}\right) = -108at^2.
$$
Then divide all parts of the equation by $-108a$ and get
$(3a^2+3at^2+t^4)x' = a^2+atx+t^2x^2$. Application of
Lemma~\ref{lem2} together with the comparison of the coefficients at
$t^3$ give $[x] = t$ and $x_1(t) = \phi_{t,1}(x)$. We apply
Proposition~\ref{prop2} to compute the coefficients of the equation
$D_1x_1' = A_1+B_1x_1+C_1x_1^2$:
$$
\left(\begin{array}{c} A_1\\B_1\\C_1\\D_1
\end{array}\right) = \left(\begin{array}{cccc}
0&0&-1&0\\
0&-1&-2t&0\\
-1&-t&-t^2&1\\
0&0&0&1
\end{array}\right)\left(\begin{array}{c}
a^2\\at\\t^2\\t^4+3at^2+3a^2
\end{array}\right) = \left(\begin{array}{c}
-t^2\\-2t^3-at\\2at^2+2a^2\\t^4+3at^2+3a^2
\end{array}\right).
$$

{\bf Remark.} Notice that $A_1, C_1$ and $D_1$ are even polynomials
in $t$, while $B_1$ is odd. Application of Lemma~\ref{lem1} together
with the comparison of the coefficients at $t^3$ gives that $[x_1]$
is a constant multiple of $t$. Then a quick observation of the
formula in Proposition~\ref{prop2} tells us that the coefficients
$A_2,C_2,D_2$ are still even, while $B_2$ is odd. From these
observation one can deduce the following interesting fact: if the
coefficients $A, C$ of the Riccati equation are even quadratic
polynomials, $D$ is even polynomial of degree 4 and $B$ is odd
polynomial of degree 3 and all partial quotients of the continued
fraction of the solution $x$ are linear then they are all constant
multiples of $t$.

Next, we will show by induction the following formulae for the
coefficients $A_i, B_i, C_i$ and the partial quotients $a_i,
\beta_i$ of $x_i$: {\footnotesize
\begin{equation}\label{no5_eq1}
A_{4k+1} = -(12k+1)t^2 - 18ka,\; B_{4k+1} = -(12k+2)t^3 -
(12k+1)at,\; C_{4k+1} = (6k+2)at^2+(6k+2)a^2,
\end{equation}
$$
 a_{4k+1} = 3(4k+1)t,
\beta_{4k+1} = (6k+2)a;
$$
\begin{equation}\label{no5_eq2}
A_{4k+2} = -t^2-a,\; B_{4k+2} = -(12k+4)t^3 - (12k+5)at,\; C_{4k+2}
= (6k+1)(12k+5)at^2 + 9(6k+1)a^2;
\end{equation}
$$
a_{4k+2} = t,\; b_{4k+2} = (6k+1)a;
$$
\begin{equation}\label{no5_eq3}
A_{4k+3} = -(12k+5)t^2-9(2k+1)a,\; B_{4k+3} = -(12k+6)t^3 -
(24k+13)at,\; C_{4k+3} = (6k+4)a^2;
\end{equation}
$$
a_{4k+3} = 3(4k+3)(t^3+2at),\; \beta_{4k+3} = (6k+4)a^2.
$$
\begin{equation}\label{no5_eq4}
A_{4k+4} = -1,\; B_{4k+4} = -(12k+12)t^3 -
(24k+23)at,\; C_{4k+4} = (6k+5)a^2((12k+13)t^2 + (18k+18)a);
\end{equation}
$$
a_{4k+4} = t,\; \beta_{4k+4} = (6k+5)a^2.
$$
 }

Indeed, already computed values of $A_1,B_1,C_1$ constitute the base
of induction. Suppose now that for all $1\le i\le 4k+1$ the values
of $A_i,B_i$ and $C_i$ are given by the formulae above. Then
Lemma~\ref{lem1} is applicable for $x_{4k+1}$ and~\eqref{eq_lem1}
gives $[x_{4k+1}] = \frac{12k+3}{(6k+2)a}t$, thus we choose $a_i =
3(4k+1)t, \beta_i = (6k+2)a$. Next, Proposition~\ref{prop2} infers
{\footnotesize
$$
\left(\begin{array}{cccc}
0&0&-1&0\\
0&-(6k+2)a&-6(4k+1)t&0\\
\hspace{-1ex}-(6k+2)^2a^2&\hspace{-1ex}-3(6k+2)(4k+1)at&\hspace{-1ex}-9(4k+1)^2t^2&\hspace{-1.6ex}3(4k+1)(6k+2)a\hspace{-1ex}\\
0&0&0&(6k+2)a
\end{array}\right)\left(\begin{array}{c}
-(12k+1)t^2-18ka\\\hspace{-1ex}-(12k+2)t^3-(12k+1)at\hspace{-1ex}\\\hspace{-1ex}(6k+2)at^2+(6k+2)a^2\hspace{-1ex}\\t^4+3at^2+3a^2
\end{array}\right)
$$$$
= \left(\begin{array}{c} -(6k+2)a(t^2+a)\\-(6k+2)a((12k+4)t^3 +
(12k+5)at)\\(6k+2)a((6k+1)(12k+5)at^2 +9(6k+1)(2k+1)a^2)
\\(6k+2)a(t^4+3at^2+3a^2)
\end{array}\right).
$$
} After division of both sides of the new equation by $(6k+2)a$ we
get the values $A_{4k+2}, B_{4k+2}$ and $C_{4k+2}$ as
in~\eqref{no5_eq2}. Then application of Lemma~\ref{lem1} gives
$[x_{4k+2}]= \frac{12k+5}{(6k+1)(12k+5)a}t$ and we choose $a_{4k+2}
= t, \beta_{4k+2} = (6k+1)a$.

We apply Proposition~\ref{prop2} once again to get {\footnotesize
$$
\left(\begin{array}{cccc}
0&0&-1&0\\
0&-(6k+1)a&-2t&0\\
\hspace{-1ex}-(6k+1)^2a^2&\hspace{-1ex}-(6k+1)at&\hspace{-1ex}-t^2&\hspace{-1.6ex}(6k+1)a\hspace{-1ex}\\
0&0&0&(6k+1)a
\end{array}\right)\left(\begin{array}{c} -t^2-a\\-(12k+4)t^3 -
(12k+5)at\\(6k+1)(12k+5)at^2 +9(6k+1)(2k+1)a^2
\\t^4+3at^2+3a^2
\end{array}\right).
$$$$
= \left(\begin{array}{c}
-(6k+1)a((12k+5)t^2+9(2k+1)a)\\-(6k+1)a((12k+6)t^3 +
(24k+13)at)\\(6k+1)(6k+4)a^3
\\(6k+1)a(t^4+3at^2+3a^2)
\end{array}\right).
$$}
\!\!Dividing both sides of the new equation by $(6k+1)a$ gives the
values $A_{4k+3}, B_{4k+3}$ and $C_{4k+3}$ from~\eqref{no5_eq3}.

We apply Lemma~\ref{lem1} to get that $[x_{4k+3}]$ is a cubic
polynomial with the leading coefficient $\frac{3+12k+6
}{(6k+4)a^2}$. After the comparison of the coefficients at both
sides of the Riccati equation for $t^5, t^4, t^3$, we get that
$$
[x_{4k+3}] = \frac{3(4k+3)}{(6k+4)a^2} (t^3 + 2at)
$$
and therefore we choose $a_{4k+3} = 3(4k+3)(t^3+2at)$, $\beta_{4k+3}
= (6k+4)a^2$.

We apply Proposition~\ref{prop2} the third time {\footnotesize
$$
\left(\begin{array}{cccc}
0&0&-1&0\\
0&-(6k+4)a^2&-6(4k+3)(t^3+2at)&0\\
\hspace{-1ex}-(6k+4)^2a^4&\hspace{-1ex}-(6k+4)(12k+9)a^2(t^3+2at)&\hspace{-1ex}-(12k+9)^2(t^3+2at)^2&\hspace{-1.6ex}3(4k+3)(6k+4)a^2(3t^2+2a)\hspace{-1ex}\\
0&0&0&(6k+5)a^2
\end{array}\right)
$$$$
\times \left(\begin{array}{c} -(12k+5)t^2-9(2k+1)a\\-(12k+6)t^3 -
(24k+13)at\\(6k+4)a^2
\\t^4+3at^2+3a^2
\end{array}\right)
= \left(\begin{array}{c} -(6k+4)a^2\\-(6k+4)a^2((12k+12)t^3 +
(24k+23)at)\\(6k+4)a^2((6k+5)(12k+13)a^2t^2 + 18(6k+5)(k+1)a^3)
\\(6k+4)a^2(t^4+3at^2+3a^2)
\end{array}\right).
$$
} Dividing both sides of the Riccati equation by $(6k+4)a^2$ gives
the values $A_{4k+4}, B_{4k+4}$ and $C_{4k+4}$ from~\eqref{no5_eq4}.

Lemma~\ref{lem1} infers that $[x_{4k+4}] = \frac{12k+13}{(6k+5)(12k+13)a^2}t$
and we choose $a_{4k+4}=t$,\\ $\beta_{4k+4}$ $=(6k+5)a^2$.

We apply Proposition~\ref{prop2} the last time to get {\footnotesize
$$
\left(\begin{array}{cccc}
0&0&-1&0\\
0&-(6k+5)a^2&-2t&0\\
\hspace{-1ex}-(6k+5)^2a^4&\hspace{-1ex}-(6k+5)a^2t&\hspace{-1ex}-t^2&\hspace{-1.6ex}(6k+5)a^2\hspace{-1ex}\\
0&0&0&(6k+4)a^2
\end{array}\right) \left(\begin{array}{c} -1\\-(12k+12)t^3 -
(24k+23)at\\(6k+5)(12k+13)a^2t^2 + 18(6k+5)(k+1)a^3
\\t^4+3at^2+3a^2
\end{array}\right)
$$$$
= \left(\begin{array}{c}
-(6k+5)a^2((12k+13)t^2+18(k+1)a)\\-(6k+5)a^2((12k+14)t^3 +
(12k+13)at)\\(6k+5)a^2((6k+8)at^2 + (6k+8)a^2)
\\(6k+5)a^2(t^4+3at^2+3a^2)
\end{array}\right).
$$
}
After division of both sides of the resulting equation by $(6k+5)a^2$, we
derive the values $A_{4k+5}, B_{4k+5}$ and $C_{4k+5}$ from~\eqref{no5_eq1}.
That finishes the inductional step.

\section{Application to rational approximations of cubic
irrationals}\label{sec5}

In this section we prove Theorem~\ref{th3}. But first of all, we
need to separate the root of the cubic equation~\textnumero 5 whose
continued fraction is~\eqref{no5} from the other two.

\begin{lemma}\label{lem15}
Let $a,t\in\CC$ be such that $|t|^2>9|a|>0$. Then the equation
\begin{equation}\label{lem15_eq}
3x^3 - 3tx^2 -3ax +at=0
\end{equation}
has exactly one root $\xi$ such that $|\xi| > |a|^{1/2}$. Moreover,
if the continued fraction~\textnumero5 converges then it converges
to $\xi$.
\end{lemma}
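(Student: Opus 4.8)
The plan is to split the statement into its two halves. The location of the roots I would settle directly with Rouch\'e's theorem. Write $P(x)=3x^3-3tx^2-3ax+at$; since $|t|^2>9|a|>0$ we have $a,t\neq 0$. On the circle $|x|=|a|^{1/2}$ compare the term $-3tx^2$ with the rest: $|{-3tx^2}|=3|t|\,|a|$, while
$$
|3x^3-3ax+at|\le 3|a|^{3/2}+3|a|^{3/2}+|a|\,|t|=6|a|^{3/2}+|a|\,|t|,
$$
and the hypothesis $|t|>3|a|^{1/2}$ gives $6|a|^{3/2}<2|t|\,|a|$, hence $6|a|^{3/2}+|a|\,|t|<3|t|\,|a|=|{-3tx^2}|$. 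So $-3tx^2$ strictly dominates $P(x)-(-3tx^2)$ on the circle; by Rouch\'e, $P$ has the same number of zeros in $\{|x|<|a|^{1/2}\}$ as $-3tx^2$, namely two, and none on the circle. Since $\deg P=3$, exactly one root $x_0$ satisfies $|x_0|>|a|^{1/2}$.

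For the convergence claim the idea is to identify $x_0$, now viewed as a holomorphic function of $t$, with the Laurent-series solution of \eqref{lem15_eq} whose continued fraction is \eqref{no5}. By Lemmata~\ref{lem4} and~\ref{lem2} that series is the unique solution of the cubic of positive degree, with $[x]=t$; hence near $t=\infty$ it is the branch with a pole at infinity and satisfies $|x(t)|\sim|t|>3|a|^{1/2}>|a|^{1/2}$, so there it coincides with the root $x_0$ of the first part. Moreover the discriminant of $P$ equals $-D=108a(t^4+3at^2+3a^2)$ (see Subsection~\ref{subsec32}), whose nonzero-$a$ zeros satisfy $|t|^2=\sqrt3\,|a|<9|a|$; therefore on the connected region $\Omega=\{\,t:|t|^2>9|a|\,\}$ the cubic has three distinct roots, and the condition $|x_0|>|a|^{1/2}$ selects one of them in a way preserved by analytic continuation, so $x_0$ is a single-valued holomorphic function on all of $\Omega$. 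By the analytic-continuation principle of Section~\ref{ssec_1_2}, the convergents $p_k/q_k$ of \eqref{no5} converge to $x(t)=x_0(t)$ uniformly on some neighbourhood of infinity contained in $\Omega$.

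The step I expect to be the main obstacle is propagating this from a neighbourhood of infinity to an arbitrary $t_0\in\Omega$ where the numerical continued fraction is merely assumed to converge (not assumed to converge uniformly). Here I would introduce the functions $e_k(t):=q_k(t)x_0(t)-p_k(t)$, which are holomorphic on $\Omega$ because $x_0$ is, and which satisfy $e_k\to 0$ uniformly on a neighbourhood $U$ of infinity (this reduces, via the maximum principle applied to $t^2e_k(t)$, to uniform convergence of the convergents on one circle near infinity). One then wants Vitali's theorem to upgrade this to $e_k\to 0$ locally uniformly on $\Omega$, in particular $e_k(t_0)\to 0$; combined with the elementary fact that convergence of \eqref{no5} at $t_0$ forces $q_k(t_0)\not\to 0$, this yields $x_0(t_0)-p_k(t_0)/q_k(t_0)=e_k(t_0)/q_k(t_0)\to 0$, i.e. the limit of \eqref{no5} is exactly $x_0(t_0)$. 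The delicate point in invoking Vitali is the local uniform boundedness of $\{e_k\}$ on $\Omega$; a more robust route, avoiding it, is to use the Riccati data of the full quotients $x_k(t)$ computed in Subsection~\ref{subsec32}: all of them have bounded $t$-degree (at most $3$), so writing $x_0=\dfrac{x_{k+1}^{\circ}p_k+\beta_{k+1}p_{k-1}}{x_{k+1}^{\circ}q_k+\beta_{k+1}q_{k-1}}$ and tracking the numerical tail $x_{k+1}^{\circ}$ at $t_0$ as the specialisation of the algebraic full quotient, one checks that the only value of $\lim p_k/q_k$ compatible with $|x(t)|\to\infty$ near infinity and with $P(\cdot,t_0)=0$ is $x_0(t_0)$, the other two roots being excluded precisely because $|x_0|>|a|^{1/2}$ isolates $x_0$ throughout $\Omega$.
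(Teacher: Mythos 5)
Your proof is correct and follows essentially the same strategy as the paper: a Rouch\'e argument on the circle $|x|=|a|^{1/2}$ to count roots, followed by identifying $x_0$ with the branch of the cubic that escapes to infinity, which is the one whose Laurent series has polynomial part $t$. The only real difference in the Rouch\'e step is the grouping: you compare $-3tx^2$ against $3x^3-3ax+at$, whereas the paper compares the natural factorisation $3tx^2-at=t(3x^2-a)$ against $3x^3-3ax=3x(x^2-a)$ (which also makes the two-root count transparent, since $t(3x^2-a)$ vanishes at $x=\pm\sqrt{a/3}$, both strictly inside the circle). Both estimates are valid, so this is cosmetic.

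On the ``Moreover'' clause, you notice, correctly, that the paper's proof is brief: it shows that the Laurent-series solution coincides with $x_0$ near $t=\infty$ and that $x_0$ is single-valued on the connected region $|t|^2>9|a|$, and it leaves implicit the step from ``the holomorphic limit near infinity is $x_0$'' to ``the numerical limit at a given $t_0$ is $x_0(t_0)$''. In the paper this gap is harmless because Lemma~\ref{lem15} is only ever invoked after uniform convergence of the convergents has been established (the unnumbered lemma immediately following, and Lemma~\ref{lem7}), at which point the Section~\ref{ssec_1_2} analytic-continuation principle applies directly. Your proposed Vitali route would handle the purely pointwise case, but, as you acknowledge, the local uniform boundedness of $q_k x_0 - p_k$ on all of $\Omega$ is not free; and your fall-back via full quotients is not fully fleshed out (the relation $x_0 = (x_{k+1}^{\circ}p_k + \beta_{k+1}p_{k-1})/(x_{k+1}^{\circ}q_k + \beta_{k+1}q_{k-1})$ is established for the Laurent series, and transferring it to a fixed numerical $t_0$ requires an argument that mere convergence of $p_k(t_0)/q_k(t_0)$ to some limit forces that limit to be a root of $P(\cdot,t_0)$). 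So your instinct to be careful here is sound, but the stronger pointwise statement you are reaching for is neither proved in the paper nor needed for its applications; sticking to the uniform-convergence setting, as the paper does, is the cleaner route.
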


\proof Let $a$ be fixed. The leading term of the series $x(t)$
from~\eqref{no5} is $t$. Therefore, $\lim_{t\to\infty}
|x(t)|=\infty$. Moreover, for $t>T_0$, where $T_0$ is the radius of
convergence of $x(t)$, this function is analytic and therefore
continuous. Hence, the continued fraction~\eqref{no5} of $x(t)$
corresponds to a root of the equation $3x^3 - 3tx^2-3ax + at=0$
which continuously tends to infinity as $t\to\infty$.

Let $x$ be any complex number that satisfies $|x| = |a|^{1/2}$. We have
$$|3x^3 - 3ax| < 3|a|^{1/2}\cdot 2|a| =
6|a|^{3/2}.$$ On the other hand, $|3tx^2-at| \ge |t|\cdot2|a| >
6|a|^{3/2}$. Therefore the number of roots of~\eqref{lem15_eq}
inside $|x|< |a|^{1/2}$ is the same as one for $3tx^2-at=0$, i.e. it
is two. We thus get that there is only one root $\xi$
of~\eqref{lem15_eq} outside that circle. Then, as $t\to\infty$, this
root $\xi(t)$ continuously tends to infinity, because the other two
can not leave the circle $|x|<|a|^{1/2}$.\endproof

%The current effective bounds on rational approximation of some cubic
%irrationals are based on the investigating the convergents which come from
%the continued fraction \textnumero~1 and~2. However, the other continued
%fractions can also be used to provide upper bounds on cubic irrationals. In
%this section we only consider the series \textnumero~5, but it seems that the
%other continued fractions from the list can also be used.

As the next step, we show that the continued fraction~\eqref{no5}
uniformly converges in some neighbourhood $U_\infty\in\CC$ of
infinity. As discussed in Section~\ref{ssec_1_2}, that will imply
that the continued fraction~\eqref{no5}, computed at $t\in
U_\infty$, will converge to the solution $x(t)\in\CC$ of a cubic
equation~\textnumero 5.

\begin{lemma}
Let $a\in \CC$ be fixed. Then the continued fraction~\eqref{no5}
uniformly converges for all $t\in U_\infty = \{t\in\CC: |t|\ge
12|a|^2\}$.
\end{lemma}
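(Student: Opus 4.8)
The plan is to bound the growth of the partial quotients against the growth of the convergent denominators $q_n(t)$, and then invoke the standard convergence criterion for continued fractions of Laurent series. Recall that the $n$-th tail of the continued fraction~\eqref{no5} converges uniformly on a region provided the product $\prod_n |\beta_n|/(|a_n(t)||a_{n+1}(t)|)$ — or more precisely the terms governing $|p_n q_{n-1} - p_{n-1} q_n|/|q_n q_{n-1}|$ — tends to zero uniformly there. From~\eqref{convp}--\eqref{convq} we have $p_n q_{n-1} - p_{n-1} q_n = (-1)^{n-1}\prod_{k=1}^n \beta_k$, so the error of the $n$-th convergent is $\big|\prod_{k=1}^n\beta_k\big| / (|q_n(t)||q_{n-1}(t)|)$, and it suffices to show this goes to $0$ uniformly on $U_\infty$.

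First I would record from the data in~\eqref{no5_eq1}--\eqref{no5_eq4} (equivalently, from display~\eqref{no5} itself) the explicit shape of the four families of partial quotients: over one period of length four, $\beta_{4k+1} = (6k+2)a$, $\beta_{4k+2}=(6k+1)a$, $\beta_{4k+3}=(6k+4)a^2$, $\beta_{4k+4}=(6k+5)a^2$, and $a_{4k+1}=3(4k+1)t$, $a_{4k+2}=t$, $a_{4k+3}=3(4k+3)(t^3+2at)$, $a_{4k+4}=t$. So over one period the numerators contribute a factor of size $\Theta(k^2 |a|^3)$ (actually $(6k+2)(6k+1)(6k+4)(6k+5)|a|^6$), while the denominators $a_{4k+1}a_{4k+2}a_{4k+3}a_{4k+4}$ have degree $6$ in $t$ with leading coefficient $9(4k+1)(4k+3)$ and, for $|t|\ge 12|a|^2$, satisfy $|a_{4k+3}| = 3(4k+3)|t||t^2+2a| \ge 3(4k+3)|t|\cdot\frac{1}{2}|t|^2$ (since $|t|^2 \ge 144|a|^4 \ge 4|a|$ so $|2a|\le\frac12|t|^2$), hence a lower bound $|a_{4k+1}a_{4k+2}a_{4k+3}a_{4k+4}| \ge \tfrac{3}{2}\cdot 9(4k+1)(4k+3)|t|^6$.

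Next I would show by a straightforward induction on $n$ that $|q_n(t)|$ grows at least geometrically: since $\deg q_n = \sum_{k=1}^n \deg a_k \to\infty$ and each step multiplies by $a_n(t)$ up to the lower-order correction $\beta_n q_{n-2}$, one gets $|q_n(t)| \ge \tfrac12 |a_n(t)||q_{n-1}(t)|$ as soon as $|a_n(t)q_{n-1}(t)| \ge 2|\beta_n q_{n-2}(t)|$. This last inequality is where the size condition $|t|\ge 12|a|^2$ is used: in each of the four cases one checks $|a_n(t)|\,|q_{n-1}(t)| \ge 2|\beta_n|\,|q_{n-2}(t)|$ because $|q_{n-1}|\ge |a_{n-1}(t)|$ (by the inductive lower bound applied one step back), $|a_{n-1}(t)|$ is at least $|t|$, and $|\beta_n|$ is a bounded multiple of $|a|$ or $|a|^2$, which is $\ll |t|$ by hypothesis; the tightest case is $n = 4k+2$ where $a_{n-1}=3(4k+1)t$ and $\beta_n=(6k+1)a$, giving comfortably $3(4k+1)|t|^2 \ge 2(6k+1)|a|$. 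With $|q_n(t)| \ge \tfrac12|a_n(t)||q_{n-1}(t)|$ established for all $n$, iterating gives $|q_n(t)||q_{n-1}(t)| \ge 2^{-2n}\prod_{k=1}^n|a_k(t)|^2 \cdot(\text{something} \ge |a_n(t)|^{-1})$, and therefore
\[
\frac{\big|\prod_{k=1}^n\beta_k\big|}{|q_n(t)||q_{n-1}(t)|} \;\le\; 4^n\,\frac{\prod_{k=1}^n|\beta_k|}{\prod_{k=1}^n |a_k(t)|^2}\,|a_n(t)| .
\]
Grouping the right side into blocks of four periods, each block of $\beta$'s contributes $\le C k^4|a|^6$ while each block of $|a_k(t)|^2$ contributes $\ge c\,k^2|t|^{12} \ge c\,k^2 (12|a|^2)^{12}$; since $|a|^6/|a|^{24}$ decays and the $k$-powers are overwhelmed, the ratio per block is bounded by a fixed constant strictly less than $1$ (absorbing the $4^n$ and the stray $|a_n(t)|$ into the geometric decay, using that $|t|$ is large), so the whole expression tends to $0$ geometrically, uniformly for $|t|\ge 12|a|^2$.

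The main obstacle is bookkeeping rather than conceptual: one must be careful that the bound $|q_n| \ge \tfrac12|a_n||q_{n-1}|$ really holds at \emph{every} index, including the "small" partial quotients $a_{4k+2}=a_{4k+4}=t$, where the correction term $\beta_n q_{n-2}$ is not negligible compared to $a_n q_{n-1}$ unless one uses the fact that $q_{n-1}$ itself contains the large factor $a_{n-1}\sim 3(4k+\cdot)t$ or $3(4k+3)(t^3+2at)$ from the previous step. Keeping the inductive hypothesis strong enough — e.g. carrying along $|q_{n}|\ge |a_n(t)|\cdot|q_{n-1}(t)|/2$ \emph{and} $\deg q_n = \deg q_{n-1}+\deg a_n$ — and checking the four residue classes of $n \bmod 4$ separately handles this. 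Once the uniform convergence on $U_\infty = \{|t|\ge 12|a|^2\}$ is in place, the discussion in Section~\ref{ssec_1_2} together with Lemma~\ref{lem15} identifies the limit as the root $x_0(t)$ of equation~\textnumero 5, completing the proof.
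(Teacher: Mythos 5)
Your argument is correct in outline but takes a genuinely different route from the paper. The paper first applies its transformation Lemma~\ref{lem8}, dividing $\beta_{4k+1},\beta_{4k+2},a_{4k+1}$ by $k$ and $\beta_{4k+3},\beta_{4k+4},a_{4k+3}$ by $k$, so that every partial quotient becomes \emph{bounded} (e.g.\ $a_{4k+1}^* = 3(4+k^{-1})t$, $\beta_{4k+1}^* = 2(3+k^{-1})a$). After that rebalancing it simply checks the pointwise dominance inequalities $|a_n^*| > |\beta_n^*| + 2$ and cites a ready-made Pringsheim-type convergence criterion from Jones--Thron. Your approach skips the normalization entirely: you work with the raw (polynomially growing) quotients, prove $|q_n|\ge\tfrac12|a_n||q_{n-1}|$ by a four-case induction, and control the error via the determinant identity $p_nq_{n-1}-p_{n-1}q_n = (-1)^{n-1}\prod\beta_k$. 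In effect you are re-proving a version of the Pringsheim criterion by hand. Both work; the paper's route is shorter because the normalization collapses the four-case bookkeeping to a single uniform dominance inequality per index, while yours is self-contained and makes explicit that the ultimate cause of convergence is $\prod|\beta_k| = o\bigl(\prod|a_k|^2\bigr)$.

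Two small corrections to your bookkeeping. First, the block of $\prod|a_k(t)|^2$ over indices $4k+1,\dots,4k+4$ has leading term $\bigl(9(4k+1)(4k+3)\bigr)^2|t|^{12}\asymp k^4|t|^{12}$, not $k^2|t|^{12}$ as you wrote; fortunately this only helps you. Second, the assertion that the induction step holds ``comfortably'' is a little optimistic: writing out, say, the $n=4k+2$ case with $|t|\ge 12|a|^2$ requires $108(4k+1)|a|^3\ge 6k+1$, which can fail when $|a|$ is small (roughly $|a|\lesssim 0.2$). This is the same implicit assumption the paper makes — its own verification of the Pringsheim inequalities, e.g.\ $|t| > (6+5k^{-1})|a|^2 + 2$, also does not follow from $|t|\ge 12|a|^2$ when $|a|$ is small — so it is a shared imprecision in the lemma statement rather than a defect of your argument, but it is worth flagging that both proofs tacitly assume $|a|$ is not tiny (in the paper's application $a$ is a positive integer).
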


\begin{proof}
For each $k\in \ZZ$ we simultaneously divide the values
$\beta_{4k+1}, \beta_{4k+2}$ and $a_{4k+1}$ by $k$. and also divide
$\beta_{4k+3}, \beta_{4k+4}$ and $a_{4k+3}$ by $k$. As was verified
in Lemma~\ref{lem8}, such transformations do not change the
convergence and the limit of the continued fraction. For a modified
continued fraction, one can readily verify that $\forall t\in
U_\infty$,
$$
3(4 + k^{-1})|t| > 2(3+k^{-1})|a| + 2;\quad |t| > (6 + k^{-1})|a| +
2;
$$$$
3(4 + 3k^{-1})|t|\cdot |t^2 + 2a| > 2(3+2k^{-1})|a|^2+2;\quad
|t|>(6+ 5k^{-1})|a|^2+2.
$$
Then the easy modification of Pringsheim convergence
criteria~\cite[Theorem 4.35]{jones_thron} implies that
$$
\bigk\left[ \begin{array}{l}  \\ 0 \end{array} \overline{\begin{array}{llll}2(3+k^{-1})a&(6+k^{-1})a&2(3+2k^{-1})a^2& (6+5k^{-1})a^2\\
3(4+k^{-1})t&t&3(4+3k^{-1})t(t^2+2a)&t
\end{array}\hspace{-1ex}}\;\; \right]
$$
uniformly converges in $U_\infty$.
\end{proof}

We will now have a closer look at the convergence of~\eqref{no5} in an open
neighbourhood of $\RR_{\ge 1}$. We will see that the continued fraction
uniformly converges in this region and therefore it can be analytically
extended from $U_\infty$ to contain $\RR_{\ge 1}$. For the rest of this
section we assume that $a$ is a positive integer and $t\in \RR_{\ge 1}$.

Let $\xi = x(t,a)$ be the cubic irrational number from
Lemma~\ref{lem15}. Let $p_n/q_n = p_n(t,a)/q_n(t,a)$ be the $n$'th
convergent of the continued fraction~\eqref{no5}. We want to
estimate the size of $|\xi - p_n/q_n|$. We will do that for values
of $n$ of the form $n = 4k+2$, $k\in\ZZ$.

Denote
$$
S_k:= \left(\begin{array}{cc} p_{4k+2}&q_{4k+2}\\
p_{4k+1}&q_{4k+1}
\end{array}\right);\quad T_k:= \left(\begin{array}{cc} p_{4k+2}&q_{4k+2}\\
p_{4k-2}&q_{4k-2}
\end{array}\right);\quad U_k:= \left(\begin{array}{cc} p_{4k-1}&q_{4k-1}\\
p_{4k-2}&q_{4k-2}
\end{array}
\right).
$$
From~\eqref{no5} and the recurrent formulae for convergents we get that
$S_{k+1} = A_kS_k$ where {\footnotesize
\begin{equation}\label{eq14} A_k =
\left(\begin{array}{cc} \!\!t&\hspace{-1.5ex} (6k+7)a\!\!\\
1&0
\end{array}
\right)
\left(\begin{array}{cc} \!\!3(4k+5)t&\hspace{-1.5ex}(6k+8)a\!\!\\
1&0
\end{array}
\right)
\left(\begin{array}{cc} \!\!t&\hspace{-1.5ex}(6k+5)a^2\!\!\\
1&0
\end{array}
\right)
\left(\begin{array}{cc} \!\!3(4k+3)t(t^2+2a)&\hspace{-1.5ex} (6k+4)a^2\!\!\\
1&0
\end{array}
\right).
\end{equation}}
For convenience, we denote the four matrices in the product above by
$C_{4k+1}, C_{4k+2}, C_{4k+3}$ and $C_{4k+4}$ so that $A_k =
C_{4k+4}C_{4k+3}C_{4k+2}C_{4k+1}$. Then, the relation between $T_{k+1}$ and
$S_k$ is
\begin{equation}\label{eq3}
T_{k+1} = \left(\begin{array}{cc}
a_{k11}&a_{k12}\\
1&0
\end{array}
\right) S_k
\end{equation}
where $a_{k11}$ and $a_{k12} $ are the corresponding entries of the
matrix $A_k$. One then computes (we used Wolfram Mathematika for
convenience) {\small
\begin{equation}\label{eq7}
\begin{array}{l}
a_{k11} = 9(4k+3)(4k+5)t^2(t^2+a)(t^2+2a) + 3(4k+5)(6k+5)a^2t^2 +
(6k+5)(6k+7) a^3,\\[1ex]
a_{k12} = 6(4k+5)(3k+2)ta^2(t^2+a).
\end{array}
\end{equation}}

Another application of the recurrent formula for convergents and~\eqref{no5}
gives $S_k = C_{4k}C_{4k-1}C_{4k-2}U_k$ or
$$
U_k = \frac{-1}{(6k-1)(6k+1)(6k+2)a^4} B_kS_k =: d^{-1}B_kS_k
$$
where
$$
B_k = \left(\begin{array}{cc} 0&\hspace{-1.5ex}-(6k-1)a^2\\
-1&t
\end{array}
\right) \left(\begin{array}{cc} 0&\hspace{-1.5ex}-(6k+2)a\\
-1&3(4k+1)t
\end{array}\right) \left(\begin{array}{cc} 0&\hspace{-1.5ex}-(6k+1)a\\
-1&t
\end{array}
\right).
$$
Now we can get the relation between $T_k$ and $S_k$:
\begin{equation}\label{eq4}
T_k = \left(\begin{array}{cc} 1&0\\
b_{k21}/d&b_{k22}/d
\end{array}
\right) S_k.
\end{equation}
where $b_{k21}$ and $b_{k22} $ are the corresponding entries of the
matrix $B_k$. The computations show that
\begin{equation}\label{eq6}
b_{k21} = -(12k+3)t^2 - (6k+2)a,\quad b_{k22} = 3(4k+1)t(t^2+a).
\end{equation}

Finally,~\eqref{eq3} and~\eqref{eq4} provide the relation between $T_k$ and
$T_{k+1}$:
\begin{equation}\label{eq40}
T_{k+1} = \frac{d}{b_{k22}} \left(\!\!\begin{array}{cc}
a_{k11}&a_{k12}\\
1&0
\end{array}
\!\!\right)\!\!  \left(\!\!\begin{array}{cc}
b_{k22}/d&0\\
-b_{k21}/d&1
\end{array}
\right) T_k = \left(\!\!\begin{array}{cc}
a_{k11} - a_{k12} \frac{b_{k21}}{b_{k22}}&\frac{da_{k12}}{b_{k22}}\\
1&0
\end{array}
\!\!\right)\! T_k=: D_kT_k.
\end{equation}
Now we compute
\begin{equation}\label{eq9}
\frac{da_{k12}}{b_{k22}} =
\frac{-(4k+5)(6k-1)(6k+1)(6k+2)(6k+4)a^6}{4k+1}.
\end{equation}
%On the other hand, if one substitutes $k = -5/4$ into the expression
%$a_{k11}- a_{k12} \frac{b_{k21}}{b_{k22}}$ and uses

\begin{lemma}\label{lem6}
Let $a\in \ZZ$ be positive and $t\in \RR_{\ge 1}$. Then the
denominators $q_{4k+2}$ and $q_{4k+6}$ satisfy the inequalities
\begin{equation}\label{lem6_eq}
9(4k+3)(4k+5)t^2(t^2+a)(t^2+2a)
q_{4k+2}<q_{4k+6}<9(4k+3)(4k+5)(t^2+a)^3q_{4k+2}
\end{equation}
\end{lemma}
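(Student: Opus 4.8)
The plan is to avoid the three-term recurrence \eqref{eq40}, in which the coefficient $\frac{da_{k12}}{b_{k22}}$ is negative (by \eqref{eq9}), and instead to use the identity $S_{k+1}=A_kS_k$. Comparing first rows there — equivalently, reading off the first row of \eqref{eq3} — gives, for every $k\ge0$,
\[
q_{4k+6}=a_{k11}\,q_{4k+2}+a_{k12}\,q_{4k+1},
\]
with $a_{k11},a_{k12}$ the polynomials displayed in \eqref{eq7}. Since $a$ is a positive integer and $t\ge1$, every partial quotient $a_i$ $(i\ge1)$ and every $\beta_i$ of \eqref{no5} is $\ge1$, so by \eqref{convq} one has $q_n\ge q_{n-1}\ge\cdots\ge q_0=1$ for all $n$; for the same reason $a_{k11}$ and $a_{k12}$ are positive. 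These positivity facts are the only place the hypotheses enter.

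\emph{Left inequality.} Since $a_{k12}q_{4k+1}>0$ and the two trailing summands $3(4k+5)(6k+5)a^2t^2+(6k+5)(6k+7)a^3$ of $a_{k11}$ are nonnegative, the identity above immediately gives
\[
q_{4k+6}>a_{k11}q_{4k+2}\ge 9(4k+3)(4k+5)t^2(t^2+a)(t^2+2a)\,q_{4k+2}.
\]

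\emph{Right inequality.} Here I would use $(t^2+a)^3=t^2(t^2+a)(t^2+2a)+a^2t^2+a^3$ to write $9(4k+3)(4k+5)(t^2+a)^3=a_{k11}+R_k$, where a short computation from \eqref{eq7} gives $R_k=6(4k+5)(3k+2)a^2t^2+(108k^2+216k+100)a^3$. By \eqref{convq} and the values $a_{4k+2}=t$, $\beta_{4k+2}=(6k+1)a$ from \eqref{no5}, one has $q_{4k+2}=t\,q_{4k+1}+(6k+1)a\,q_{4k}>t\,q_{4k+1}$ (using $q_{4k}\ge1$), hence $a_{k12}q_{4k+1}<\frac{1}{t}a_{k12}q_{4k+2}=6(4k+5)(3k+2)a^2(t^2+a)\,q_{4k+2}$. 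Finally $6(4k+5)(3k+2)a^2(t^2+a)\le R_k$, because $6(4k+5)(3k+2)=72k^2+138k+60\le 108k^2+216k+100$. Combining, $q_{4k+6}=a_{k11}q_{4k+2}+a_{k12}q_{4k+1}<(a_{k11}+R_k)q_{4k+2}=9(4k+3)(4k+5)(t^2+a)^3q_{4k+2}$, which is the right-hand inequality of \eqref{lem6_eq}.

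I do not anticipate any real obstacle: everything reduces to the positivity of the $q_n$ in the stated range and two elementary polynomial comparisons. The only genuine pitfall is the temptation to iterate \eqref{eq40}: there the correction term $\bigl|\frac{da_{k12}}{b_{k22}}\bigr|$ is, as a function of $t$ and $a$, of the same order of magnitude as a product of two consecutive leading coefficients, so the naive induction that bounds $q_{4k-2}$ through the $k-1$ case of the lemma is far too lossy and produces a negative (hence useless) lower bound. Routing the argument through $S_{k+1}=A_kS_k$ rather than through $T_{k+1}$ circumvents this completely.
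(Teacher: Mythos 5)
Your proof is correct and follows essentially the same route as the paper: the lower bound comes from reading off $q_{4k+6}=a_{k11}q_{4k+2}+a_{k12}q_{4k+1}$ and dropping the last two summands of $a_{k11}$ in \eqref{eq7}, while the upper bound uses the rewrite $t^2(t^2+a)(t^2+2a)=(t^2+a)^3-a^2t^2-a^3$ together with $q_{4k+2}>tq_{4k+1}$ to control $a_{k12}q_{4k+1}$. (Incidentally, the ``pitfall'' you warn against is not one the paper falls into — it likewise works from \eqref{eq3} rather than iterating \eqref{eq40}.)
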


\proof First of all, since all the entries of the matrices $C_i$ are
positive, we immediately get that $q_n>0$ for all $n\in\NN$. The
first inequality follows immediately from~\eqref{eq3}
and~\eqref{eq7}. For the second expression we observe that
$t^2(t^2+a)(t^2+2a) = (t^2+a)^3 - t^2a^2 - a^3$ and
therefore~\eqref{eq7} can be rewritten as
\begin{equation}\label{eq8}
a_{k11}=9(4k+3)(4k+5)(t^2+a)^3 -
6(4k+5)(3k+2)t^2a^2-4(27k^2+54k+25)a^3.
\end{equation}
We also notice that $q_{4k+2} = tq_{4k+1} + (6k+1)aq_{4k}$,
therefore~\eqref{eq7} implies
$$a_{k12}q_{4k+1} = 6(4k+5)(3k+2)a^2(t^2+a)(q_{4k+2} - (6k+1)aq_{4k})< 6(4k+5)(3k+2)a^2(t^2+a)q_{4k+2}.$$
Finally, we substitute the last inequality and~\eqref{eq8}
into~\eqref{eq3} and get
$$
q_{4k+6}< (9(4k+3)(4k+5)(t^2+a)^3 - (6k+5)(6k+8)a^3)q_{4k+2},
$$
and the second inequality in~\eqref{lem6_eq} follows
immediately.\endproof

Now we are ready to provide the lower and upper bounds for $q_{4k+2}$. In
view of~\eqref{no5}, one can easily compute that $q_2=3t^2+a$. Also, the
Stirling's formula together with the evaluation of the Wallis integral gives
that $\forall k\in \ZZ_{\ge 2}$,
$$
2k\left(\frac{2k}{e}\right)^k<(2k+1)!!<
4k\left(\frac{2k}{e}\right)^k.
$$

We can now get the estimates on the denominators $q_{4k+2}$:
\begin{equation}\label{eq11}
q_{4k+2}<(3t^2+a)(4k+1)!! (9(t^2+a)^3)^k < 8(3t^2+a)k c_6^k\cdot
k^{2k}\quad \mbox{and}
\end{equation}
\begin{equation}\label{eq10}
q_{4k+2}>(3t^2+a)(4k+1)!! (9t^2(t^2+a)(t^2+2a))^k > 4(3t^2+a)k
c_7^k\cdot k^{2k},
\end{equation}
where $c_6 = 144(t^2+a)^3/e^2$ and $c_7 =
144t^2(t^2+a)(t^2+2a)/e^2$.

In the next step, we estimate
$\frac{p_{4k+6}}{q_{4k+6}}-\frac{p_{4k+2}}{q_{4k+2}}$.

\begin{lemma}\label{lem7}
For all $a\in\NN$ and $t\ge 1$ with $a^3<t^2(t^2+a)(t^2+2a)$ the
continued fraction~\eqref{no5} converges. Moreover, its limit $\xi =
x(t,a)$ satisfies
\begin{equation}\label{eq_lem7}
\left|\frac{p_{4k+2}}{q_{4k+2}} - \frac{p_{4k+6}}{q_{4k+6}}\right|<
\left|\xi - \frac{p_{4k+2}}{q_{4k+2}}\right| <
2\left|\frac{p_{4k+2}}{q_{4k+2}} - \frac{p_{4k+6}}{q_{4k+6}}\right|<
\tau_3 c_4^k,
\end{equation}
where
%$$\tau_1 = \frac{243\sqrt{3}e^2ta^4}{1120 (t^2+a)^2(3t^2+a)^2},\quad$$
$$\tau_3 = \frac{105\sqrt{3}e^2a^4}{8 t(t^2+2a)(3t^2+a)^2}\quad
%$$c_5 = \frac{a^2}{16(t^2+a)^6},$$
\mbox{and}\quad c_4 =\frac{a^6}{16t^4(t^2+a)^2(t^2+2a)^2}.$$ In particular,
the continued fraction converges uniformly in $t$.
\end{lemma}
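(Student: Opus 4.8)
The plan is to estimate the quantity $\left|\frac{p_{4k+2}}{q_{4k+2}} - \frac{p_{4k+6}}{q_{4k+6}}\right|$ directly via the matrix $T_k$ and its evolution $T_{k+1} = M_k T_k$ given by~\eqref{eq40}, and then show that this estimate both telescopes to control $|x - p_{4k+2}/q_{4k+2}|$ and decays geometrically in $k$. First I would record that, by the standard determinant identity for continued fractions, the difference $\frac{p_{4k+6}}{q_{4k+6}} - \frac{p_{4k+2}}{q_{4k+2}}$ equals $\frac{\pm \det(T_{k+1})}{q_{4k+2}q_{4k+6}}$, and that $\det(T_{k+1}) = -\frac{da_{k12}}{b_{k22}}\det(T_k)$ by~\eqref{eq40}; iterating this and using the explicit value~\eqref{eq9} of $\frac{da_{k12}}{b_{k22}}$ gives a closed product for $|\det(T_k)|$. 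Thus I reduce everything to the three explicit pieces: $|\det(T_k)|$ (a product of the ratios in~\eqref{eq9}), the lower bound~\eqref{eq10} for $q_{4k+2}$, and the lower bound on $q_{4k+6}$ coming from Lemma~\ref{lem6}.

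Next I would combine these estimates. The product $\prod_{j} \frac{(4j+5)(6j-1)(6j+1)(6j+2)(6j+4)a^6}{4j+1}$ telescopes partially: the factors $(6j-1)(6j+1)(6j+2)(6j+4)$ rearrange against a double-factorial-type product, and dividing by the $q$-denominators, each of which carries its own $(4k+1)!!$-type growth by~\eqref{eq10} and~\eqref{eq11}, the super-exponential factorial parts cancel. What remains is a genuinely geometric ratio: comparing the $a^6$ in the numerator of~\eqref{eq9} against the $t^4(t^2+a)^2(t^2+2a)^2$ produced by the two denominators $q_{4k+2}$ and $q_{4k+6}$ gives exactly the claimed $c_4 = \frac{a^6}{16 t^4(t^2+a)^2(t^2+2a)^2}$, with the constant $\tau_1$ absorbing the leftover low-order factors (the $q_2 = 3t^2+a$ initial value, the $105\sqrt3 e^2$ from the residual finite part of the products and the Stirling constants, etc.). This already shows $\sum_k \left|\frac{p_{4k+2}}{q_{4k+2}} - \frac{p_{4k+6}}{q_{4k+6}}\right|$ converges geometrically, hence $\{p_{4k+2}/q_{4k+2}\}$ is Cauchy and the subsequence converges; since consecutive convergents of a continued fraction are interlaced around the limit once convergence is known, the full continued fraction~\eqref{no5} converges, and by the analytic-continuation argument of Section~\ref{ssec_1_2} together with Lemma~\ref{lem15} its limit is the root $x = x(t,a)$.

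For the middle inequality chain in~\eqref{eq_lem7}, the point is that the errors $e_m := x - p_{4m+2}/q_{4m+2}$ all have the same sign (the even-indexed convergents $p_n/q_n$ with $n \equiv 2 \pmod 4$ approach $x$ monotonically from one side, because all matrix entries in~\eqref{eq14} are positive for $t \ge 1$, $a \in \NN$, so the relevant signs never flip), and the ratio of consecutive differences $\left|\frac{p_{4k+6}}{q_{4k+6}} - \frac{p_{4k+2}}{q_{4k+2}}\right| \big/ \left|\frac{p_{4k+10}}{q_{4k+10}} - \frac{p_{4k+6}}{q_{4k+6}}\right|$ is, by the same product computation, at least $2$ once $a^3 < t^2(t^2+a)(t^2+2a)$ (this is precisely the hypothesis: it forces $c_4 < \tfrac14$, say, uniformly, which I would check, or at least $c_4$ small enough that the geometric series has ratio below $1/2$). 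Then $|e_k| = \sum_{m \ge k}\left(\tfrac{p_{4m+2}}{q_{4m+2}} - \tfrac{p_{4m+6}}{q_{4m+6}}\right)$ in absolute value lies between the first term and twice the first term, giving the lower and upper bounds, and the final bound $\tau_1 c_4^k$ is just the upper bound on $2\left|\frac{p_{4k+2}}{q_{4k+2}} - \frac{p_{4k+6}}{q_{4k+6}}\right|$. Uniformity in $t$ over $\RR_{\ge 1}$ follows because, for $a$ fixed, $c_4 = c_4(t)$ is bounded away from $1$ on that range (indeed decreasing in $t$), so the geometric convergence is uniform.

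The main obstacle I anticipate is purely bookkeeping: getting the constant $\tau_1$ and the base $c_4$ exactly right requires carefully pairing up the five-fold product in~\eqref{eq9} against the two double-factorial lower bounds~\eqref{eq10} for $q_{4k+2}$ and $q_{4k+6}$, tracking the finite ``tail'' of ratios $\prod_{j < k}\frac{(6j+1)(6j+2)(6j+4)\cdots}{(4j+1)\cdots}$ that does not cancel, and folding the Stirling constants ($2k(2k/e)^k < (2k+1)!! < 4k(2k/e)^k$) and the $q_2 = 3t^2+a$ start value into the single clean constant $\frac{105\sqrt3\, e^2 a^4}{8\, t(t^2+2a)(3t^2+a)^2}$. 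The sign/monotonicity claim underpinning the two-sided inequality also needs a short but honest justification from the positivity of the entries of the $C_i$ in~\eqref{eq14}; everything else is a routine, if lengthy, computation.
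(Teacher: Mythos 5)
Your outline reproduces the paper's own proof strategy in all essential respects: the determinant identity for consecutive differences of convergents, iterating $T_{k+1}=M_kT_k$ via \eqref{eq40} and \eqref{eq9} to get a closed product for $\det T_{k+1}$, the lower bounds on the $q$'s from Lemma~\ref{lem6} and \eqref{eq10} combined with Stirling, the same-sign argument from positivity of the entries of the $C_i$ in \eqref{eq14}, and the hypothesis $a^3<t^2(t^2+a)(t^2+2a)$ forcing the ratio of consecutive differences below $\tfrac12$ (the paper's inequality \eqref{eq41}). The only slight imprecision is that you briefly conflate the per-step decay of $c_4$ with the bound on the consecutive-difference ratio \eqref{eq41} --- they are distinct quantities, both of which are controlled by the same hypothesis --- but you hedged this appropriately, and the substance of the argument is the same as in the paper.
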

\proof Equation~\eqref{eq9} together with~\eqref{eq40} give
$$
\frac{p_{4k+6}}{q_{4k+6}} - \frac{p_{4k+2}}{q_{4k+2}} =
\frac{(4k+5)(6k-1)(6k+1)(6k+2)(6k+4)a^6q_{4k-2}q_{4k+2}}{(4k+1)q_{4k+2}q_{4k+6}}\left(\frac{p_{4k+2}}{q_{4k+2}}
- \frac{p_{4k-2}}{q_{4k-2}}\right)
$$
In particular, that means that differences
$\frac{p_{4k+6}}{q_{4k+6}} - \frac{p_{4k+2}}{q_{4k+2}}$ share the
same sign for all $k\in\NN$. By Lemma~\ref{lem6}, the big fraction
in the last equation is bounded from above by
\begin{equation}\label{eq41}
\frac{(4k+5)(6k-1)(6k+1)(6k+2)(6k+4)a^6}{(4k+1)\cdot 9^2
(4k-1)(4k+1)(4k+3)(4k+5)(t^2(t^2+a)(t^2+2a))^2}<\frac12.
\end{equation}
One computes $\frac{p_{4k+2}}{q_{4k+2}}$ as the sum
$$
\frac{p_{4k+2}}{q_{4k+2}} = \frac{p_2}{q_2} + \sum_{i=1}^k
\left(\frac{p_{4i+2}}{q_{4i+2}} - \frac{p_{4i-2}}{q_{4i-2}}\right).
$$
Since the terms in the summation decrease by a factor bigger than
two, the right hand side has a limit as $k\to\infty$ and moreover,
$$
\xi = \lim_{k\to\infty} \frac{p_{4k+2}}{q_{4k+2}} < \frac{p_2}{q_2}
+ 2\left(\frac{p_{6}}{q_6} - \frac{p_2}{q_2}\right).
$$
Now, the first two inequalities in~\eqref{eq_lem7} readily follow
from
$$
\xi = \frac{p_{4k+2}}{q_{4k+2}} + \sum_{i=k}^\infty
\left(\frac{p_{4i+6}}{q_{4i+6}} - \frac{p_{4i+2}}{q_{4i+2}}\right).
$$
For the remaining inequality in~\eqref{eq_lem7}, we use~\eqref{eq40}
to get
$$
p_{4k+6}q_{4k+2}-p_{4k+2}q_{4k+6} = \prod_{i=0}^k\det D_{i+1} \det
T_1
$$$$
= \frac{4k+5}{5}\cdot a^{6k}\prod_{i=1}^{k} (6i-1)(6i+1)(6i+2)(6i+4)\det T_1
$$$$
=\frac{(4k+5)a^{6k}}{5\cdot 2\cdot 4} \cdot (6k+1)(6k+2)(6k+4)\cdot
\frac{(6k)!}{3^{2k}(2k)!}\det T_1.
$$
The last expression is bounded from above by
%$$
%\frac{2916 k^7a^{2k}}{175} \cdot \frac12 \cdot \frac{\sqrt{12\pi
%k}(6k/e)^{6k}}{3^{2k}\sqrt{4\pi k} (2k/e)^{2k}}\det T_1 = \frac{1458\sqrt3
%k^7}{175}\cdot c_4^k\cdot k^{4k}\det T_1,
%$$
%Similarly, the upper bound of the expression is
$$
< 126k^4a^{6k}\cdot \frac{\sqrt{12\pi
k}(6k/e)^{6k}}{3^{2k}\sqrt{4\pi k} (2k/e)^{2k}}\det T_1 =
126\sqrt{3}k^4 \cdot c_5^k\cdot k^{4k}\det T_1.
$$
where $c_5 = 6^4a^6/e^4$. To compute $\det T_1$, we use~\eqref{eq3} which
gives
$$\det T_1 = -60ta^2(t^2+a) \det S_0.$$
Direct computations then reveal
$$
p_1=3t^2+2a,\; p_2 = 3t(t^2+a),\; q_1=3t,\; q_2=3t^2+a
$$
and therefore $\det S_0 = -2a^2$. Combining all that information
together, we get
$$
%\frac{69984\sqrt{3}}{35}ta^4(t^2+a)k^7\cdot c_4^k\cdot k^{4k} <
\det T_{k+1} < 15120\sqrt{3}ta^4(t^2+a)k^4\cdot c_5^k\cdot k^{4k}.
$$

%We use~\eqref{eq11} to get a lower bound
%$$
%\left| x-\frac{p_{4k+2}}{q_{4k+2}}\right| > \frac{\det
%T_{k+1}}{q_{4k+2}q_{4k+6}} > \frac{69984\sqrt{3}ta^4(t^2+a)\cdot
%k^3}{35\cdot 64 (3t^2+a)^2c_2}\cdot
%\left(\frac{c_4}{c_2^2}\right)^k
%$$$$
%=\frac{243\sqrt{3}e^2 ta^4}{1120 (t^2+a)^2(3t^2+a)^2}
%\left(\frac{c_4}{c_2^2}\right)^k,
%$$
The second inequality in~\eqref{eq_lem7} implies
$$
\left| \xi-\frac{p_{4k+2}}{q_{4k+2}}\right| < 2\frac{\det
T_{k+1}}{q_{4k+2}q_{4k+6}} \stackrel{\eqref{eq10}}< \frac{2\cdot
15120\sqrt{3}ta^4(t^2+a)}{16 (3t^2+a)^2c_7}\cdot
\left(\frac{c_5}{c_7^2}\right)^k
$$$$
= \frac{105\sqrt{3}e^2a^4}{8 t(t^2+2a)(3t^2+a)^2}
\left(\frac{c_5}{c_7^2}\right)^k.
$$
Finally, a direct computation implies that $c_5/c_7^2 = c_4$ and
hence the last inequality in~\eqref{eq_lem7} is verified.
\endproof

Notice that if we do not restrict $t$ to positive real numbers, then the
sequence of the convergents $\frac{p_{4k+2}}{q_{4k+2}}$ considered as
functions of $t$ converges uniformly as soon as the absolute value of the
fraction~\eqref{eq41} together with the value of $|c_4|$ is strictly less
than some absolute constant $\sigma<1$. For all $t\in\RR$ that satisfy the
conditions of Lemma~\ref{lem7} there exists an open neighbourhood
$U(t)\in\CC$ of $t$ where those upper bounds are satisfied. Therefore, we can
analytically continue the limit of the continued fraction~\eqref{no5} from
$U_\infty$ to
$$U_\infty \cup \bigcup_{t} U(t).$$
This observation justifies that the continued fraction~\eqref{no5} converges
to the root of the cubic equation~\textnumero 5 for all positive integer $a$
and all $t\in\RR_{\ge 1}$ such that $t^2(t^2+a)(t^2+2a)>a^3$. Notice that the
last inequality follows from the condition $t^2\ge9a$ of Theorem~\ref{th3}.

In the further discussion we always assume that $t\in\NN$. We then
provide a series of good approximations to $\xi$ which will later be
used to show that there are no too good rational approximations of
$\xi$.

\begin{lemma}\label{lem5}
For all $t\in\ZZ$ and $k\in\ZZ_{\ge 2}$,
\begin{equation}\label{lem5_eq}
\gcd(p_{4k+2},q_{4k+2}) \ge \prod_{p\in\PP\atop p\ge 5}
p^{\left\lfloor \frac{2k}{p}\right\rfloor}.
\end{equation}
\end{lemma}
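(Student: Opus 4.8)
Since consecutive convergents satisfy $p_nq_{n-1}-p_{n-1}q_n=(-1)^{n+1}\prod_{i=1}^{n}\beta_i$, a non-zero constant, the polynomials $p_n(t),q_n(t)\in\ZZ[t]$ are coprime in $\QQ[t]$, so their greatest common divisor in $\ZZ[t]$ is a positive integer $G_n$, equal to the gcd of the contents of $p_n$ and $q_n$; consequently $\gcd(p_n(t),q_n(t))\ge G_n$ for every $t\in\ZZ$. It therefore suffices to prove that for each prime $p\ge 5$ the power $p^{\lfloor 2k/p\rfloor}$ divides every coefficient of both $p_{4k+2}(t)$ and $q_{4k+2}(t)$; the statement of the lemma then follows, the powers being pairwise coprime for distinct $p$, after which one specialises $t$.

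\textbf{The arithmetic identities.}
Fix $p\ge5$ and work modulo $p$, writing $C_i=\bigl(\begin{smallmatrix}a_i&\beta_i\\1&0\end{smallmatrix}\bigr)$ so that $\bigl(\begin{smallmatrix}p_n&q_n\\p_{n-1}&q_{n-1}\end{smallmatrix}\bigr)=C_n\cdots C_1\bigl(\begin{smallmatrix}t&1\\1&0\end{smallmatrix}\bigr)$. From~\eqref{no5} one reads off, for every $m$, that $a_{4m+1}=3(4m+1)t$ and $a_{4m+3}=3(4m+3)t(t^2+2a)$, and — this is the key point — the cancellations $\beta_{4m+1}+\beta_{4m+2}=3(4m+1)a$ and $\beta_{4m+3}+\beta_{4m+4}=3(4m+3)a^2$. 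A direct multiplication then gives
$$
C_{4m+2}C_{4m+1}C_{4m}=\begin{pmatrix}3(4m+1)\,t(t^2+a)& \beta_{4m}\bigl(3(4m+1)t^2+\beta_{4m+2}\bigr)\\ 3(4m+1)t^2+\beta_{4m+1}& 3(4m+1)\,t\,\beta_{4m}\end{pmatrix},
$$
whose diagonal entries are both divisible by $3(4m+1)$; analogously the diagonal entries of $C_{4m+4}C_{4m+3}C_{4m+2}$ are both divisible by $3(4m+3)$. Hence, modulo a prime $p$ dividing $4m+1$ (resp.\ $4m+3$), this three‑term block of transfer matrices becomes anti‑diagonal.

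\textbf{Propagation and counting.}
The plan is to track how these anti‑diagonal blocks interact with the indices $i$ at which $\beta_i\equiv0\pmod p$ — where the recurrences \eqref{convp}--\eqref{convq} force $(p_n,q_n)$ to become a scalar multiple of $(p_{i-1},q_{i-1})$ modulo $p$ — in order to push factors of $p$ into \emph{both} $p_{4k+2}$ and $q_{4k+2}$. The target statement is that $p^{e}$ divides $\gcd(p_{4k+2},q_{4k+2})$, where $e$ is the number of indices of the form $4m+1$ or $4m+3$ lying (with their surrounding block) in $[1,4k+2]$ and divisible by $p$. As $m$ varies over the admissible range, $\{4m+1,4m+3\}$ sweeps out all odd integers between $3$ and $4k+1$, so $e$ is the number of odd multiples of $p$ not exceeding $4k+1$, namely $\bigl\lceil\tfrac12\lfloor(4k+1)/p\rfloor\bigr\rceil$, and this is $\ge\lfloor 2k/p\rfloor$ since $\lfloor 4k/p\rfloor\ge 2\lfloor 2k/p\rfloor$. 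Taking the product over all $p\ge5$ gives $G_{4k+2}\ge\prod_{p\ge5}p^{\lfloor 2k/p\rfloor}$, which is \eqref{lem5_eq} after specialising $t$.

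\textbf{Expected difficulty.}
The delicate part is the middle step: a single resonance $p\mid 4m+1$ by itself forces $p$ only into the \emph{determinant} $p_nq_{n-1}-p_{n-1}q_n=\pm\prod\beta_i$, not into the two entries of a row, so a naive estimate of $v_p$ along the recurrences fails, and one must follow the cancellation $\beta_{4m+1}+\beta_{4m+2}\equiv 0$ through the entire stretch of transfer matrices around the resonance (between the nearest indices with $\beta_i\equiv0\pmod p$ on either side) and verify that the factor survives to the end of it. Keeping the bookkeeping straight when a resonance block and a neighbouring $\beta$‑vanishing share an endpoint — or, for $p=5$, overlap further — is where the real work lies; and the crudeness of this accounting (one factor of $p$ per resonance, nothing from higher powers or from the primes $2,3$) is precisely why the bound \eqref{lem5_eq} is not expected to be sharp.
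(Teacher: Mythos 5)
Your reduction to polynomial contents is sound, and the key arithmetic observation — that $\beta_{4m+1}+\beta_{4m+2}=3(4m+1)a$ and $\beta_{4m+3}+\beta_{4m+4}=3(4m+3)a^2$, so that the triple product $C_{4m+2}C_{4m+1}C_{4m}$ (respectively $C_{4m+4}C_{4m+3}C_{4m+2}$) becomes anti-diagonal modulo a prime $p$ dividing $4m+1$ (resp.\ $4m+3$) — is exactly the cancellation on which the paper's proof rests. Your counting of odd multiples of $p$ in $[3,4k+1]$ also matches the paper's final tally of $\lfloor 2k/p\rfloor$ factors.

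However, the proposal has a genuine gap at its centre, and you have correctly identified it yourself. An anti-diagonal block $\begin{pmatrix}0&\alpha\\\beta&0\end{pmatrix}\pmod p$ with $\alpha,\beta\not\equiv0$ contributes a factor of $p$ to the \emph{determinant} but not to either row of the running matrix; after applying it the new rows are (mod $p$) just $\alpha$ and $\beta$ times the swapped old rows, with nothing divisible by $p$. To actually extract a factor of $p$ from \emph{both} $p_{4k+2}$ and $q_{4k+2}$ one must show that the anti-diagonal entries themselves eventually vanish mod $p$. The paper does this by an induction that is entirely absent from your sketch: it forms the growing symmetric products $\mathbf{C}_l=\prod_{j=4m+3+l}^{4m+3-l}C_j$ around the resonance, proves these remain anti-diagonal with entries equal to running products $\prod_{i\le l}c_i$ of specific half-integer multiples of $a$ or $a^2$, and then, by a case analysis on $p\equiv1$ or $3\pmod 4$, pins down the index $l\approx\frac{p}{3}$ at which one of the $c_i$ is $\equiv0\pmod p$; from that point on the whole block of roughly $2p$ consecutive $C$-matrices has every entry divisible by $p$. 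That analysis — your ``one must follow the cancellation through the entire stretch of transfer matrices around the resonance and verify that the factor survives to the end of it'' — is the proof of the lemma, and you have stated it as an unverified plan rather than carried it out. As written, the proposal therefore describes the correct strategy (the same one the paper uses) but does not constitute a proof.
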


\proof Consider a positive integer $m$ and any integer divisor $p\mid 4m+5$.
To save the space we denote $r:=t(t^2+2a)$. Then $m\equiv -5/4$ (mod $p$) and
one can check that for all integer $l$
$$
C_{4(m+l)+4} = \left(\begin{array}{cc} t&\hspace{-1ex}(6(m+l)+7)a\\
1&0
\end{array}
\right) \equiv \left(\begin{array}{cc} t&(6l-1/2) a\\
1&0
\end{array}
\right)\; (\mathrm{mod}\;p);
$$
$$
C_{4(m+l)+3} = \left(\begin{array}{cc} 3(4(m+l)+5)t&\hspace{-1ex}(6(m+l)+8)a\\
1&0
\end{array}
\right) \equiv \left(\begin{array}{cc} 12lt&(6l+1/2) a\\
1&0
\end{array}
\right)\; (\mathrm{mod}\;p);
$$
$$
C_{4(m+l)+2} = \left(\begin{array}{cc} t&\hspace{-1ex}(6(m+l)+5)a^2\\
1&0
\end{array}
\right) \equiv \left(\begin{array}{cc} t&(6l-5/2) a^2\\
1&0
\end{array}
\right)\; (\mathrm{mod}\;p);
$$
$$
C_{4(m+l)+1} = \left(\begin{array}{cc} 3(4(m+l)+3)r&\hspace{-1ex}(6(m+l)+4)a^2\\
1&0
\end{array}
\right) \equiv \left(\begin{array}{cc} (12l-6)r&(6l-7/2) a^2\\
1&0
\end{array}
\right)\; (\mathrm{mod}\;p).
$$

We will show by induction by $l$ that
$$
\mathbf{C}_l:=\prod_{k=4m+3+l}^{4m+3-l} C_k \equiv \left(\begin{array}{cc} 0&\prod_{i=0}^l c_i\\
\prod_{i=0}^{l-1}c_i&0
\end{array}
\right)\; (\mathrm{mod}\;p),\quad \mbox{where}\; \left\{
\begin{array}{l}
c_{4i} = -(6i-1/2)a\\
c_{4i+1} = (6i+5/2)a^2\\
c_{4i+2} = -(6i+7/2)a^2\\
c_{4i+3} = (6i+13/2)a
\end{array}\right. .
$$
Indeed, the base of induction $l=0$ is straightforward and the inductional
step is verified by the series of matrix multiplications: {\footnotesize
$$
C_{4(m+j)+4}\mathbf{C_{4j}}C_{4(m-j)+2} \equiv \left(\hspace{-1ex}\begin{array}{cc} t&\hspace{-1ex}(6j-1/2)a\\
1&0 \end{array}\hspace{-1ex}\right)\left(\hspace{-1ex}\begin{array}{cc} 0&\hspace{-1.5ex}\prod_{i=0}^{4j} c_i\\
\prod_{i=0}^{4j-1}c_i&0 \end{array}\hspace{-1ex}\right)\left(\hspace{-1ex}\begin{array}{cc} t&\hspace{-1ex}-(6j+5/2)a^2\\
1&0 \end{array}\hspace{-1ex}\right)\equiv \left(\hspace{-1ex}\begin{array}{cc} 0&\hspace{-1.5ex}\prod_{i=0}^{4j+1} c_i\\
\prod_{i=0}^{4j}c_i&0 \end{array}\hspace{-1ex}\right);
$$
$$
\begin{array}{rl}
C_{4(m+j)+5}\mathbf{C_{4j+1}}C_{4(m-j)+1} &\equiv \left(\hspace{-1ex}\begin{array}{cc} (12j+6)r&\hspace{-1ex}(6j+5/2)a^2\\
1&0 \end{array}\hspace{-1ex}\right)\left(\hspace{-1ex}\begin{array}{cc} 0&\hspace{-1.5ex}\prod_{i=0}^{4j+1} c_i\\
\prod_{i=0}^{4j}c_i&0 \end{array}\hspace{-1ex}\right)\left(\hspace{-1ex}\begin{array}{cc} -(12j+6)r&\hspace{-1ex}-(6j+7/2)a^2\\
1&0 \end{array}\hspace{-1ex}\right)\\
&\equiv \left(\hspace{-1ex}\begin{array}{cc} 0&\hspace{-1.5ex}\prod_{i=0}^{4j+2} c_i\\
\prod_{i=0}^{4j+1}c_i&0 \end{array}\hspace{-1ex}\right);
\end{array}
$$
$$
C_{4(m+j)+6}\mathbf{C_{4j+2}}C_{4(m-j)} \equiv \left(\hspace{-1ex}\begin{array}{cc} t&\hspace{-1ex}(6j+7/2)a^2\\
1&0 \end{array}\hspace{-1ex}\right)\left(\hspace{-1ex}\begin{array}{cc} 0&\hspace{-1.5ex}\prod_{i=0}^{4j+2} c_i\\
\prod_{i=0}^{4j+1}c_i&0 \end{array}\hspace{-1ex}\right)\left(\hspace{-1ex}\begin{array}{cc} t&\hspace{-1ex}-(6j+13/2)a\\
1&0 \end{array}\hspace{-1ex}\right)\equiv \left(\hspace{-1ex}\begin{array}{cc} 0&\hspace{-1.5ex}\prod_{i=0}^{4j+3} c_i\\
\prod_{i=0}^{4j+2}c_i&0 \end{array}\hspace{-1ex}\right);
$$
$$
\begin{array}{rl}
C_{4(m+j)+7}\mathbf{C_{4j+3}}C_{4(m-j)-1} &\equiv \left(\hspace{-1ex}\begin{array}{cc} 12(j+1)t&\hspace{-1ex}(6j+13/2)a\\
1&0 \end{array}\hspace{-1ex}\right)\left(\hspace{-1ex}\begin{array}{cc} 0&\hspace{-1.5ex}\prod_{i=0}^{4j+3} c_i\\
\prod_{i=0}^{4j+2}c_i&0 \end{array}\hspace{-1ex}\right)\left(\hspace{-1ex}\begin{array}{cc} -12(j+1)t&\hspace{-1ex}-(6j+11/2)a\\
1&0 \end{array}\hspace{-1ex}\right)\\
&\equiv \left(\hspace{-1ex}\begin{array}{cc} 0&\hspace{-1.5ex}\prod_{i=0}^{4j+4} c_i\\
\prod_{i=0}^{4j+3}c_i&0 \end{array}\hspace{-1ex}\right).
\end{array}
$$
}

The last observation implies that, as soon as $c_l\equiv 0$ (mod $p$), the
entries of $\mathbf{C}_l$ with indices~11 and 12, as well as all the entries
of $\mathbf{C}_{l+1}$ are multiples of $p$. Now, if $p$ is of the form  $4l
+1$ then it divides $4m+5$ for all $m = l-1 + rp$ where $r$ is an arbitrary
integer. If on top of that $3\nmid p$ then either $p=4l+1 = 12i+5$ for some
integer $i$ and hence $p\mid c_{4i+1}$ or $p=12i+13$ and $p\mid c_{4i+3}$. In
both cases, we have that $p\mid c_j$ for an integer
$j\in[\frac{p-4}3,\frac{p-2}3]$ and hence the product of matrices
$C_{4rp+2p}C_{4rp+2p-1}\cdots C_{4rp+1}$ has all its entries divisible by
$p$. Similarly, if $p$ is of the form $4l+3$, then for any $m = 3l+1 + rp$
with $r\in\ZZ$ we have $p\mid 4m+5$. Also, if $3\nmid p$ then $p\mid c_j$ for
either $j = \frac{p+2}3$ or $j=\frac{p+4}3$. It is easy to derive then that
all the entries of the product $C_{4rp+4p}C_{4rp+4p-1}\cdots C_{4rp+2p+1}$
are divisible by $p$.

In a similar way we can deal with any divisor $p$ of $4m+3$. In that case we
have $m\equiv -3/4$ (mod $p$) and the following congruences modulo $p$ are
satisfied
$$
C_{4(m+l)+4} \equiv \left(\begin{array}{cc} t&(6l+5/2) a\\
1&0
\end{array}
\right);\quad
C_{4(m+l)+3} \equiv \left(\begin{array}{cc} (12l+6)t&(6l+7/2) a\\
1&0
\end{array}
\right);
$$
$$
C_{4(m+l)+2} \equiv \left(\begin{array}{cc} t&(6l+1/2) a^2\\
1&0
\end{array}
\right);\quad
C_{4(m+l)+1} \equiv \left(\begin{array}{cc} 12lr&(6l-1/2) a^2\\
1&0
\end{array}
\right).
$$
By induction by $l$, one can verify that
$$
\mathbf{D}_l:=\prod_{k=4m+1+l}^{4m+1-l} C_k \equiv \left(\begin{array}{cc} 0&\prod_{i=0}^l d_i\\
\prod_{i=0}^{l-1}d_i&0
\end{array}
\right)\; (\mathrm{mod}\;p),\quad \mbox{where}\; \left\{
\begin{array}{l}
c_{4i} = -(6i+1/2)a^2\\
c_{4i+1} = (6i+7/2)a\\
c_{4i+2} = -(6i+5/2)a\\
c_{4i+3} = (6i+11/2)a^2
\end{array}\right.
$$
Similarly as before, for $p$ of the form $4l+1$ such that $3\nmid p$ we have
that all the entries of the product $C_{4rp+4p}C_{4rp+4p-1}\cdots
C_{4rp+2p+1}$ are divisible by $p$. And for $p$ of the form $4l+3$, $3\nmid
p$, we have that all the entries of $C_{4rp+2p}C_{4rp+2p-1}\cdots C_{4rp+1}$
are divisible by $p$.

The upshot of the above arguments is that the product of matrices
$\prod_{i=k}^1 C_k$ can be split in at least $\lfloor \frac{k}{2p}\rfloor$
pieces that have all their entries divisible by $p$. Hence all the entries of
the whole product are divisible by $p^{\lfloor \frac{k}{2p}\rfloor}$. After
collecting the information for all primes $p\ge 5$ and taking into account
that $S_k = \prod_{i=4k}^1 C_kS_0$ we get~\eqref{lem5_eq}. \endproof

For simplicity, denote the right hand side of~\eqref{lem5_eq} by $g(k)$. We
estimate this function by something nicer. We have
$$
2^{\sum_{i=1}^\infty \frac{2k}{2^i}}\cdot 3^{\sum_{i=1}^\infty
\frac{2k}{3^i}}\cdot  \prod_{p\in\PP\atop p\ge 5} p^{\left\lfloor
\frac{2k}{p}\right\rfloor} \cdot \prod_{p\in \PP\atop p\ge 5}
p^{\sum_{i=2}^\infty \frac{2k}{p^i}} \ge (2k)!\ge \sqrt{4\pi k}
\left(\frac{2k}{e}\right)^{2k}.
$$
Therefore
\begin{equation}\label{eq42}
g(k)\ge \sqrt{4\pi k} (c_1 k)^{2k},\quad\mbox{where }\;
c_1=\frac{1}{\sqrt{3}e} \cdot \exp\left(-\sum_{p\in\PP\atop p\ge 5}
\frac{\ln p}{p(p-1)}\right) \approx 0.16948.
\end{equation}

Consider $p_k^*:=p_{4k+2}/\gcd(p_{4k+2}, q_{4k+2})$ and
$q_k^*:=q_{4k+2}/\gcd(p_{4k+2}, q_{4k+2})$. Definitely, they are both integer
numbers and Lemma~\ref{lem5} together with~\eqref{eq42} and~\eqref{eq11} give
us
$$
q_k^* < \frac{4(3t^2+a)\sqrt{k}}{\sqrt{\pi}}
\left(\frac{c_6}{c^2_1}\right)^k:= Q(k,t,a).
$$
Then with help of Lemma~\ref{lem7} we compute
$$
||q^*_k\xi|| \le |q^*_k\xi - p^*_k| = q^*_k\cdot \left|\xi -
\frac{p^*_k}{q^*_k}\right| < \frac{105\sqrt{3}e^2a^4
\sqrt{k}}{2\sqrt{\pi} t(t^2+2a)(3t^2+a)}\cdot
\left(\frac{9a^6(t^2+a)}{c^2_1e^2 t^4(t^2+2a)^2}\right)^k =:
R(k,t,a).
$$
Notice that by the definition of $\tau_1 , \tau_2 , c_2 $ and $c_3 $
in Theorem~\ref{th3}, we have $Q(k,t,a) = \tau_1 \sqrt{k} c_2 ^k$
and $R(k,t,a)=\tau_2
 \sqrt{k} c_3
^{-k}$.

%\begin{proposition}\label{prop4}
%Let $a$ and $t$ be two positive integers such that $c_7 < 1$. Let $k_0$ be
%the minimal positive integer such that $k_0^{7/2} c_7^{k_0} > (k_0+1)^{7/2}
%c_7^{k_0+1}$. Let $q_0$ be the maximal real solution of the equation
%\begin{equation}\label{eq_prop4}
%\log (2\tau_4q)\log(c_7^{-1})^7 = \left(1 + \frac{8\log\log(2\tau_4 q)}{\log (2\tau_4 q)}\right)^7.
%\end{equation}
%Then for all $q\ge \max\{q_0, \frac{1}{2R(k_0,t,a)}\}$ one has
%\begin{equation}\label{eq2_prop4}
%||qx|| > \frac{\log c_7^{-1}}{6\tau_3 (2\tau_4)^\frac{\log c_6}{\log c_7^{-1}}}
%\cdot q^{-\frac{\log c_6}{\log c_7^{-1}}} \cdot \log (2\tau_4q)^{-\frac{8\log
%c_6}{\log c_7^{-1}}-\frac12}
%\end{equation}

%\end{proposition}
%
%\proof

{\bf Proof of Theorem~\ref{th3}.} Consider an arbitrary $q \ge
\frac{1}{2R(1,t,a)}$. Notice that since $c_3
>e$ then the sequence
$c_3 ^k/\sqrt{k}$ is strictly increasing for all $k\ge 1$.
Therefore, there exists a unique $k\ge 2$ such that $R(k,t,a)<
\frac{1}{2q}\le R(k-1,t,a)$. Let $p\in \ZZ$ be such that $||q\xi|| =
|q\xi-p|$. Since two vectors $(p_k^*, q_k^*)$ and $(p_{k+1}^*,
q_{k+1}^*)$ are linearly independent, at least one of them must be
linearly independent with $(p,q)$. Suppose that is $(p_k^*, q_k^*)$.
Then we estimate the absolute value of the following determinant:
$$
1\le \left| \begin{array}{cc}
q&q_k^*\\
p&p_k^*
\end{array}\right| \le \left| \begin{array}{cc}
q&q_k^*\\
p-q\xi& p_k^* - q_k^*\xi
\end{array}\right| \le q R(k,t,a) + ||q\xi||Q(k,t,a).
$$
Since $qR(k,t,a) < \frac12$, we must have $||q\xi|| \ge
(2Q(k,t,a))^{-1}$. Analogously, if $(p,q)$ is linearly independent
with $(p_{k+1}^*, q_{k+1}^*)$, we have $||q\xi||\ge
(2Q(k+1,t,a))^{-1}$. The latter lower bound is weaker. Now, we need
to rewrite the right hand side of the inequality in terms of $q$
rather than $k$.

From $\frac{1}{2q}\le R(k-1,t,a)$ we have that
$$
\frac{c_3 ^{k-1}}{2\tau_2 \sqrt{k-1}} \le q.
$$
We show that the last inequality implies
\begin{equation}\label{eq12}
k-1\le \frac{\log(2\tau_2
 q) + \log\log(2\tau_2
  q)}{\log c_3
}.
\end{equation}
Indeed, if we take $\kappa$ such that $\kappa\log c_3
 = \log(2\tau_2
q) + \log\log(2\tau_2
 q)$ then $c_3
^{\kappa}/ \sqrt{\kappa} \ge 2\tau_2 q$ is equivalent to
$$
\log\log (2\tau_2 q) \ge \frac12 \log\left(\frac{\log (2\tau_2 q) +
\log\log (2\tau_2
 q)}{\log c_3
}\right).
$$
By taking exponents of both sides and after simplifications, we
derive $\log c_3 \cdot \log(2\tau_2 q) \ge 1 + \frac{\log\log
(2\tau_2 q)}{\log (2\tau_2
 q)}$. In view of $\log c_3
 > 1$, the last inequality is
satisfied for all $2\tau_2
 q>1$.

Substitute that into $||q\xi||\ge (2Q(k+1,t,a))^{-1}$ and get
$$
||q\xi|| \ge \frac{1}{2\tau_1
 \sqrt{k+1}c_2
^{k+1}} \stackrel{\eqref{eq12}}{\ge} \frac{(\log c_3
)^{1/2}}{2\sqrt{3}\tau_1 c^2_2 (\log(2\tau_2 q) + \log\log(2\tau_2
q))^{1/2} (2\tau_2 q)^{\frac{\log c_2 }{\log c_3 }}\cdot \log
(2\tau_2 q)^{\frac{\log c_2 }{\log c_3 }}}.
$$$$
\ge \frac{(\log c_3 )^{1/2}}{6\tau_1
 c_2
^2(2\tau_2 )^\frac{\log c_2 }{\log c_3 }} \cdot q^{-\frac{\log c_2
}{\log c_3 }} \cdot \log (2\tau_2 q)^{-\frac{\log c_2 }{\log c_3
}-\frac12}.
$$
\endproof

\section{Very good rational approximations to cubic irrationals}\label{sec4}

In this section we will look for the best possible rational
approximations which can be derived from the continued
fraction~\eqref{no5} and then prove Theorem~\ref{th4}.

Given $n\in\NN$, define $d_2(n)$ to be the largest odd divisor of
$n$. For convenience, in the remaining part of this section we
always have $a=1$. We apply the following continued fraction
transformations:
\begin{itemize}
\item divide $a_1$ and $\beta_1$ by $d_2(\beta_1)$ and then multiply
    $a_{2}$ and $\beta_{3}$ by $d_2(\beta_1)$;
\item repeat the similar transformations for indices $i = 3, \ldots, n$:
    divide the values $a_i$ and $\beta_i$ of a newly achieved continued
    fraction by $d_2(\beta_i)$ and then multiply $a_{i+1}$ and
    $\beta_{i+2}$ by $d_2(\beta_i)$.
\end{itemize}
By Lemma~\ref{lem8}, they do not change the sequence of convergents
of the continued fraction.  Let $a_i^*$ and $\beta_i^*$ be the
coefficients of the continued fraction after performing these
transformations. One can verify that for all $i,k\in \ZZ_{\ge 0}$,
$1\le i, 4k+4\le n$ they satisfy
\begin{equation}\label{ast1}
\beta^*_i = \frac{\beta_i}{d_2(\beta_i)};\; a^*_{4k+1} = \frac{a_{4k+1}}{d_2(3k+1)} \cdot \frac{\prod_{j=0}^{k-1}(6j+1)(6j+5)}{\prod_{j=0}^{k-1} d_2(3j+1)d_2(3j+2)}.
\end{equation}
\begin{equation}\label{ast2}
a^*_{4k+2} = \frac{a_{4k+2}d_2(3k+1)}{(6k+1)} \cdot
\frac{\prod_{j=0}^{k-1}
d_2(3j+1)d_2(3j+2)}{\prod_{j=0}^{k-1}(6j+1)(6j+5)}.
\end{equation}
\begin{equation}\label{ast3}
a^*_{4k+3} = a_{4k+3}(6k+1) \cdot
\frac{\prod_{j=0}^{k-1}(6j+1)(6j+5)}{\prod_{j=0}^{k}
d_2(3j+1)d_2(3j+2)};\;
\end{equation}
\begin{equation}\label{ast4}
a^*_{4k+4} = a_{4k+4} \frac{\prod_{j=0}^{k}
d_2(3j+1)d_2(3j+2)}{\prod_{j=0}^{k}(6j+1)(6j+5)}.
\end{equation}

Let $t = t_k := lcm\{a\in \ZZ\;:\; 1\le a\le 6k+1; a\equiv \pm
1\;(\mathrm{mod}\; 6)\}$.
\begin{lemma}\label{lem10}
With $t=t_k$, all values $a^*_i$ for $1\le i\le 4k+3$ are integer.
\end{lemma}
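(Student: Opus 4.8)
The plan is to fix $a=1$ and show, prime by prime, that when $t=t_k$ the denominator of each $a_i^*$ in \eqref{ast1}--\eqref{ast4} divides its numerator. First I would note that $t_k$, being an lcm of integers coprime to $6$, is itself coprime to $6$; that each factor $d_2(3j+1)$ or $d_2(3j+2)$ occurring in \eqref{ast1}--\eqref{ast4} divides $3j+1$, resp.\ $3j+2$, hence is odd and prime to $3$; and that $6j+1,6j+5,6k+1$ are coprime to $6$. So every denominator in \eqref{ast1}--\eqref{ast4} is coprime to $6$, and it suffices to check $v_p(\mathrm{denominator})\le v_p(\mathrm{numerator})$ for each prime $p\ge5$, where $v_p$ is the $p$-adic valuation. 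The cases with small $l$ (in particular $k=0$) reduce to empty products and are trivial, so assume $k\ge1$.

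The key arithmetic input is that for $p\ge5$ one has $p^e\equiv\pm1\pmod6$, so $p^e$ itself is one of the integers whose lcm is $t_k$ whenever $p^e\le6k+1$; hence $v_p(t_k)=E_p:=\max\{e\ge0:p^e\le6k+1\}$. Next I would rewrite the products: since $\{3j+1,3j+2:0\le j\le l-1\}$ is exactly the set of integers in $[1,3l-1]$ prime to $3$, and $\{6j+1,6j+5:0\le j\le l-1\}$ is exactly the set of integers in $[1,6l-1]$ congruent to $\pm1\pmod6$ (the isolated factors $d_2(3l+1)$ and $6l+1$ enlarging these slightly when present), each $a_i^*$ becomes (an integer divisible by $t_k$) times $X^{\pm1}Y^{\mp1}$, where $X=\prod_{1\le a\le c,\ a\equiv\pm1\,(6)}a$ and $Y=\prod_{1\le n\le b,\ 3\nmid n}d_2(n)$; a glance at the four cases shows that $b\le3k+2\le6k+1$, $c\le6k+1$ and $2b-3\le c\le2b+1$ throughout $1\le i\le4k+3$ (for instance $i=4l+4\le4k+3$ forces $l\le k-1$, giving $b=3l+2$, $c=6l+5\le6k-1$). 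Since $v_p(d_2(n))=v_p(n)$ for odd $p$, the whole lemma reduces to proving
$$|v_p(X)-v_p(Y)|\le v_p(t_k)\qquad\text{for every prime }p\ge5.$$

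The combinatorial heart is the elementary count: for $p\ge5$, $e\ge1$, $b\ge0$ and an integer $c$ with $2b-3\le c\le2b+1$, writing $B=\lfloor b/p^e\rfloor$ and $M=\lfloor c/p^e\rfloor$, one has $M\in\{2B-1,2B,2B+1\}$ and
$$\bigl|\,\#\{1\le m\le M:m\equiv\pm1\,(\mathrm{mod}\ 6)\}-\#\{1\le m\le B:3\nmid m\}\,\bigr|\le1.$$
I would prove this by cutting $\{1,\dots,2B\}$ into $\lfloor B/3\rfloor$ blocks of length $6$ (two admissible residues in each) plus a tail of length $2(B\bmod3)\le4$, obtaining $\#\{m\le2B:m\equiv\pm1\,(6)\}=(B-\lfloor B/3\rfloor)-[B\equiv2\,(\mathrm{mod}\ 3)]$ versus $\#\{m\le B:3\nmid m\}=B-\lfloor B/3\rfloor$, and observing that replacing the endpoint $2B$ by $M$ changes the first count by at most $1$. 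Then $v_p(X)=\sum_{e\ge1}\#\{m\le\lfloor c/p^e\rfloor:m\equiv\pm1\,(6)\}$ and $v_p(Y)=\sum_{e\ge1}\#\{m\le\lfloor b/p^e\rfloor:3\nmid m\}$, and both sums have nonzero terms only when $p^e\le6k+1$, i.e.\ only for $e\le E_p$; applying the displayed inequality termwise and summing over these at most $E_p$ values of $e$ gives $|v_p(X)-v_p(Y)|\le E_p=v_p(t_k)$. Together with the fact that $t_k$ divides the numerator of $a_i^*$, this is $v_p(\mathrm{denominator})\le v_p(\mathrm{numerator})$, and the lemma follows.

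The main obstacle is the termwise $p$-adic bookkeeping: one must confirm that for each $e$ the discrepancy between the ``$\equiv\pm1\pmod6$'' count up to $c$ and the ``$3\nmid\cdot$'' count up to $b$ is at most $1$ (this uses the precise block structure above together with $2b-3\le c\le2b+1$ and $p\ge5$), and that at most $E_p$ values of $e$ contribute to either valuation. The latter is exactly where the choice $t=t_k$ --- large enough, but no larger --- is forced, and where the cut-off $i\le4k+3$ enters, via the bound $c\le6k+1$ on the $X$-product.
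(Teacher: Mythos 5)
Your proof is correct and follows essentially the same strategy as the paper: reduce to comparing $p$-adic valuations for primes $p\ge5$, show that the counts of multiples of each power $p^e\le 6k+1$ on the two sides of the fraction differ by at most one, and sum over the at most $v_p(t_k)$ admissible exponents $e$. The only cosmetic difference is that where the paper establishes the per-$p^e$ bound by tracking the interleaving order in which multiples of $p$ first enter the numerator and denominator, you instead rewrite the products as initial segments of the arithmetic progressions $\pm1\pmod6$ and $\not\equiv0\pmod3$ and verify the bound by a direct block-of-six count.
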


\proof Since each $a_i$ is an integer multiple of $t$, it is
sufficient to show that all the values~\eqref{ast1}--\eqref{ast4}
still remain integer if $a_i$'s are replaced by $t$. Also notice
that for $m\in \ZZ, m\ge 0$, the number $d_2(3m+1)$ appears in the
denominator of $a^*_{4m+1}$ for the first time and then occurs in
the fractions for all $a^*_i$, $i\ge 4m+1$. Analogously, $d_2(3m+2)$
appears in the denominator of $a^*_{4m+3}$, $6m+1$ appears in
$a_{4m+2}$ and $6m+5$ appears in $a_{4m+4}$ for the first time.

Let $p>3$ be integer which is coprime with 6. First, assume that
$p=6m+1$ for some integer $m$. We investigate, for what indices $i$
do multiples of $p$ appear in the denominators of $a_i^*$ for the
first time.
\begin{itemize}
\item $(6l+1)(6m+1)$ appears at index $4(6ml+m+l)+2$;
\item $(6l+5)(6m+1)$ appears at index $4(6ml+5m+l)+4$;
\item $d_2((3l+1)(6m+1))$ appears at index $4(6ml+2m+l)+1$;
\item $d_2((3l+2)(6m+1))$ appears at index $4(6ml+4m+l)+3$.
\end{itemize}
Notice that the first two and the last two factors always appear at
the opposite sides of the fractions~\eqref{ast1}--\eqref{ast4}.
Obviously, $6ml+m+l < 6ml+2m+l < 6ml+4m+l < 6ml + 5m+ l < 6m(l+1) +
m + (l+1)$ for all integers $m>0, l\ge 0$. Therefore, we conclude
that the amount of multiples of $p$ in the numerators and
denominators of each fraction~\eqref{ast1}--\eqref{ast4} does not
differ by more than 1.

The case $p=6m+5$ is considered analogously. We have
\begin{itemize}
\item $(6l+1)(6m+5)$ appears at index $4(6ml+m+5l)+4$;
\item $(6l+5)(6m+5)$ appears at index $4(6ml+5m+5l+4)+2$;
\item $d_2((3l+1)(6m+5))$ appears at index $4(6ml+2m+5l+1)+3$;
\item $d_2((3l+2)(6m+5))$ appears at index $4(6ml+4m+5l+3)+1$.
\end{itemize}
Then one can easily check that again the amount of multiples of $p$
in the numerators and denominators of each
fraction~\eqref{ast1}--\eqref{ast4} do not differ by more than 1.

Consider a prime number $p>3$. Let $d = d_{p,k}$ be the largest
power of $p$ such that $p^d\le 6k+1$. We have shown that for all
$i\in\NN$, the number of multiples of $p$ in the denominators
of~\eqref{ast1}--\eqref{ast4} do not exceed the number of multiples
of $p$ in the numerators by one. The same observation is true for
multiples of $p^2, p^3, \ldots, p^d$. Finally, there are no
multiples of $p^{d+1}$ in the fractions for $a^*_i$ with $i\le
4k+3$. Then one concludes that, after all cancelations, the power of
$p$ in the denominators of~\eqref{ast1}--\eqref{ast4} does not
exceed $d$.

Finally, the fact that $t_k = \prod_{p\in \PP_{>3}} p^{d_{p,k}}$
concludes the proof.
\endproof

Notice that
$$
t_k = \frac{e^{\psi(6k+1)}}{2^{\big\lfloor\frac{\log(6k+1)}{\log
2}\big\rfloor} 3^{\big\lfloor\frac{\log(6k+1)}{\log 3}\big\rfloor}},
$$
where $\psi(n)$ is the second Chebyshev function. Then, one can use $\psi(n)
\approx n$ to provide a good estimate for $t_k$. In fact, for all
$\epsilon>0$ there exists $k_0\in \ZZ$ such that for all $k>k_0$,
\begin{equation}\label{eq23}
\frac{e^{(1-\epsilon)(6k+1)}}{(6k+1)^2}< t_k <
\frac{6e^{(1+\epsilon)(6k+1)}}{(6k+1)^2}.
\end{equation}
For small values of $\epsilon$ the value of $k_0$ may become big and
hard to locate. To make the bounds precise and effective (which is
needed for Theorem~\ref{th4} and Conjecture~B) we use $0.96x\le
\psi(x)\le 1.04x$ from~\cite{ros_sch_62} to get:
\begin{equation}\label{eq23_1}
\frac{e^{0.96(6k+1)}}{(6k+1)^2}< t_k <
\frac{6e^{1.04(6k+1)}}{(6k+1)^2}.
\end{equation}
The first inequality is satisfied for $k\ge 2$.

We have that for all $i\le 4k+3$ the values of $a^*_i$ and $\beta_i$
are positive integers, therefore $q_i>q_{i-1}$. Moreover,
$$
a^*_i q_{i-1}<q_i = a^*_i q_{i-1} + \beta^*_i q_{i-2} < (a^*_i +
\beta^*_i)q_{i-1}.
$$
For even values of $i$ we have $\beta^*_i = 1\le a^*_i$, while
$\beta^*_{4m+1} \le 3m+1 < 4m+1 \le a^*_{4m+1}$. Analogously,
$\beta^*_{4m+3} \le 3m+2 < 4m+3 \le a^*_{4m+3}$. Therefore we can
simplify the upper bound for $q_i$:
\begin{equation}\label{eq21}
q_i < 2a_i^* q_{i-1}.
\end{equation}

%The inequality $a_i^*q_{i-1}<q_i$ for $i=m\le k$ implies
%$$
%q_{4m+3} > \frac{3(4m+3)(6m+1)
%\prod_{j=0}^{m-1}(6j+1)(6j+5)}{\prod_{j=0}^{m} d_2(3j+1)d_2(3j+2)}
%t_k(t_k^2+2) q_{4m+2}.
%$$
%Notice that
%$$
%\prod_{j=0}^4 (6j+1)(6j+5) > 4^5 \prod_{j=0}^5 d_2(3j+1)d_2(3j+2)
%$$
%and for all $m\ge 5$, $(6m+1)(6m+5) > 2 (3m+4)(3m+5) \ge
%4d_2(3(m+1)+1)d_2(3(m+1)+2)$. Finally, for $5\le m\le k$ we get a lower bound
%\begin{equation}\label{eq18}
%q_{4m+3} > 72m^2 \cdot 4^m t_k^3 q_{4m+2}.
%\end{equation}

The inequality $(a_i^*+\beta_i^*)q_{i-1}>q_i$ for $i=m\le k$ implies
$$
q_{4m+3} <
\left(\frac{3(4m+3)}{(6m+5)}t_k(t_k^2+2)\prod_{j=0}^m
\frac{4(3j+1)(3j+2)}{d_2(3j+1)d_2(3j+2)} +
\frac{6m+4}{d_2(3m+2)}\right) q_{4m+2}.
$$
Denote the product of $4(3j+1)(3j+2)/d_2(3j+1)d_2(3j+2)$ in the formula above
by $P_m$. Notice that it is bigger than $4^m$ which is larger than $6m+4$ for
$m\ge 3$. Then we transform the inequality to a simpler form
\begin{equation}\label{eq19}
q_{4m+3} < 8P_m t_k^3 q_{4m+2}.
\end{equation}

For the opposite estimate between $q_{4m+3}$ and $q_{4m+2}$, we provide a
lower bound on $a^*_{4m+3}$:
$$
a_{4m+3}^*= \frac{3(4m+3)}{6m+5}t_k(t_k^2+2) P_m \prod_{j=0}^m \frac{(6j+1)(6j+5)}{(6j+2)(6j+4)}.
$$
As $m$ grows, the product in this expression decreases and converges
to $0.577...$, therefore we get an estimate
\begin{equation}\label{eq18}
q_{4m+3} > a_{4m+3}^* q_{4m+2} > P_m t_k^3 q_{4m+2}.
\end{equation}

As the next step, we estimate $||q_{4m+2} \xi||$ for all $m<k$. By
Lemma~\ref{lem7}, we have $$\left| \xi -
\frac{p_{4m+2}}{q_{4m+2}}\right| < 2\left|\frac{p_{4m+2}}{q_{4m+2}}
- \frac{p_{4m+6}}{q_{4m+6}}\right|.
$$
We also derive
$$
\max_{4m+2\le i\le 4m+5}\left|\frac{p_i}{q_i} -
\frac{p_{i+1}}{q_{i+1}}\right| \le \frac{\left| \prod_{i=-1}^{4m+4}
\det C^*_i\right|}{q_{4m+2}q_{4m+3}}\le \prod_{i=1}^{4m+6}
\frac{\beta_i}{d_2(\beta_i)} \cdot \frac{1}{q_{4m+2}q_{4m+3}} \le
\frac{2(3m+4)P_m}{d_2(3m+4)q_{4m+2}q_{4m+3}},
$$
where the matrices $C_i^*$ are defined in~\eqref{eq14} but with the
new partial quotients $a_i^*, \beta_i^*$ instead of $a_i, \beta_i$.
Together with~\eqref{eq18}, the last two inequalities yield
\begin{equation}\label{eq20}
|q_{4m+2}\xi - p_{4m+2}| < \frac{8(3m+4)}{d_2(3m+4) t_k^3 q_{4m+2}}.
\end{equation}

\begin{lemma}\label{lem16}
The value $P_m$ satisfies
$$
%\frac{2^{4m+4}}{3m+2} <
P_m \le (3m+2) 2^{4m+4}
$$
\end{lemma}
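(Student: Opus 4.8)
The plan is to convert the whole product into a single power of $2$ by tracking $2$-adic valuations. Writing $v_2$ for the $2$-adic valuation, note that $n/d_2(n)=2^{v_2(n)}$ for any positive integer $n$; hence
$$
P_m=4^{m+1}\prod_{j=0}^{m}2^{\,v_2(3j+1)+v_2(3j+2)}=2^{2m+2}\cdot 2^{E_m},\qquad E_m:=\sum_{j=0}^{m}\bigl(v_2(3j+1)+v_2(3j+2)\bigr),
$$
so the whole task reduces to bounding $E_m$ from above.

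First I would observe that, as $j$ ranges over $0,1,\dots,m$, the numbers $3j+1$ and $3j+2$ range over precisely the integers in $[1,3m+2]$ that are not divisible by $3$. Consequently
$$
E_m=\sum_{n=1}^{3m+2}v_2(n)-\sum_{k=1}^{m}v_2(3k)=v_2\bigl((3m+2)!\bigr)-v_2(m!),
$$
where the identity $v_2(3k)=v_2(k)$ (as $3$ is odd) was used, together with $\sum_{n\le N}v_2(n)=v_2(N!)$. Then I would apply Legendre's formula $v_2(N!)=N-s_2(N)$, with $s_2(N)$ the binary digit sum of $N$, to get
$$
E_m=\bigl(3m+2-s_2(3m+2)\bigr)-\bigl(m-s_2(m)\bigr)=2m+2+s_2(m)-s_2(3m+2).
$$
Combining with the first display yields the exact formula $P_m=2^{\,4m+4+s_2(m)-s_2(3m+2)}$, and since $s_2(3m+2)\ge 1$ we obtain $P_m\le 2^{\,4m+3+s_2(m)}$.

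Finally I would absorb the remaining factor $2^{s_2(m)}$: for $m\ge 1$ the integer $m$ has at most $\lfloor\log_2 m\rfloor+1$ binary digits, so $s_2(m)\le\lfloor\log_2 m\rfloor+1$ and hence $2^{s_2(m)}\le 2m$, giving $P_m\le 2m\cdot 2^{4m+3}=m\cdot 2^{4m+4}\le(3m+2)\,2^{4m+4}$; the case $m=0$ is checked directly ($P_0=8\le 32$). There is no genuine obstacle here — the argument is essentially bookkeeping — the only point needing a little care is the identification of the set $\{3j+1,3j+2:0\le j\le m\}$ with the non-multiples of $3$ in $[1,3m+2]$, together with the fact that removing the $3$-divisible terms reintroduces exactly $v_2(m!)$.
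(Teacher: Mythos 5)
Your proof is correct, and it takes a genuinely different (and cleaner) route than the paper. The paper bounds the $2$-adic content of $P_m$ by directly counting, for each $j$, how many elements of the index set are multiples of $2^j$ (namely $\lfloor(3m+2)/2^j\rfloor-\lfloor m/2^j\rfloor$), treating $j=1$ exactly and the remaining terms with the crude bound $\lfloor(3m+2)/2^j\rfloor-\lfloor m/2^j\rfloor<(m+1)/2^{j-1}+1$, then sums a geometric series plus a $\log_2(3m+2)$ error. You instead identify the same index set with the non-multiples of $3$ in $[1,3m+2]$, remove the $3k$'s using $v_2(3k)=v_2(k)$, and invoke Legendre's identity $v_2(N!)=N-s_2(N)$; this gives the \emph{exact} formula $P_m=2^{4m+4+s_2(m)-s_2(3m+2)}$, from which the stated bound follows by $s_2(3m+2)\ge 1$ and $2^{s_2(m)}\le 2m$. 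Your route buys an exact closed form (and in fact the slightly stronger bound $P_m\le m\cdot 2^{4m+4}$ for $m\ge 1$), avoids the paper's somewhat loosely stated summation, and makes the power-of-two structure of $P_m$ transparent, which is relevant elsewhere in that section where the author cares about $P_m$ being a power of $2$. Both arguments are ultimately counting $v_2$'s, but yours packages the bookkeeping into a single standard identity rather than a term-by-term estimate.
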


\proof We can compute $P_m$ by counting the number of even integers
between 1 and $3m+2$ that are not multiples of 3, then count the
number of integers in the same set, that are multiples of 4,
multiples of 8, etc. We stop when we reach the maximal power of two
not exceeding $3m+2$.

The number of multiples of $2^j$ among all numbers between 1 and
$3m+2$ is $\big\lfloor \frac{3m+2}{2^j}\big\rfloor$. Among them,
$\big\lfloor\frac{m}{2^j}\big\rfloor$ numbers are multiples of 3.
For $j=1$ we have
$$
\left\lfloor \frac{3m+2}{2^j}\right\rfloor -
\left\lfloor\frac{m}{2^j}\right\rfloor = m+1
$$
and for bigger values of $j$,
$$
%\frac{m+1}{2^{j-1}} - 1 <
\left\lfloor \frac{3m+2}{2^j}\right\rfloor
- \left\lfloor\frac{m}{2^j}\right\rfloor < \frac{m+1}{2^{j-1}} + 1.
$$
Combining these inequalities for all $j \in\{1,2,\ldots,
\log_2(3m+2)\}$, we derive
$$
%\frac{2^{2m+2}}{3m+2}< 2^{(m+1)\sum_{j=0}^{m-1} 2^{-j} -
%\log_2(3m+2)}<
\frac{P_m}{4^{m+1}} < 2^{(m+1)\sum_{j=0}^{m-1} 2^{-j} +
\log_2(3m+2)} < (3m+2)2^{2m+2}.
$$

\endproof

In the next lemma, we show that $p_{4m+2}$ and $q_{4m+2}$ are both
multiples of a large power of~2. Therefore we can cancel it from
$p_{4m+2}$ and $q_{4m+2}$ and hence significantly improve the
inequality~\eqref{eq20}.

\begin{lemma}\label{lem9}
Let $t$ be odd and $a=1$. Then all the entries of the matrix
$A_{4n+3}A_{4n+2}A_{4n+1}A_{4n}$ are multiples of $2^7$, and all the
entries of the matrix $A_{4n+7}\cdots A_{4n+1}A_{4n}$ are multiples
of $2^{15}$.
\end{lemma}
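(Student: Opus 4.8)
My plan is to work directly with the $2\times2$ integer matrices $C_i$ of \eqref{eq14}, specialised to $a=1$ with $t$ odd (put $r:=t(t^{2}+2)$, which is then odd), and to track products of consecutive $C_i$ modulo powers of $2$. The two relevant products are $A_{4n+3}A_{4n+2}A_{4n+1}A_{4n}=C_{16n+16}C_{16n+15}\cdots C_{16n+1}$ and $A_{4n+7}\cdots A_{4n}=C_{16n+32}\cdots C_{16n+1}$. First I would record the reduction mod $2$: one checks $C_i\equiv M_1:=\begin{pmatrix}1&0\\1&0\end{pmatrix}\pmod2$ when $i\equiv1,3\pmod4$ and $C_i\equiv M_2:=\begin{pmatrix}1&1\\1&0\end{pmatrix}\pmod2$ when $i\equiv0,2\pmod4$ (using that $r$ is odd). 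Since $M_2M_1\equiv N:=\begin{pmatrix}0&0\\1&0\end{pmatrix}\pmod2$ with $N^{2}=0$, the pair matrices $E_k:=C_{4k+2}C_{4k+1}$ and $F_k:=C_{4k+4}C_{4k+3}$ both reduce to $N$, so already $A_k=F_kE_k\equiv0\pmod2$.

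Next I would lift this. Writing $E_k=N+2E'_k$, $F_k=N+2F'_k$ and using $t^{2}\equiv1\pmod8$, $r\equiv3t\pmod8$, a short computation gives $NE'_k+F'_kN\equiv kI\pmod2$; since $N^{2}=0$ this yields $A_k=2(NE'_k+F'_kN)+4F'_kE'_k\equiv2kI\pmod4$, so $A_k\equiv0\pmod4$ for even $k$ and $A_k\equiv2I\pmod4$ for odd $k$. For even $k$ put $K_k:=A_k/4$; running the same bookkeeping one level further (keeping the $C_i$ now mod $8$) I would get $K_{4m}\equiv\begin{pmatrix}0&0\\0&1\end{pmatrix}\pmod2$ and $K_{4m+2}\equiv\begin{pmatrix}1&0\\0&0\end{pmatrix}\pmod2$.

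For the first assertion I would then group $A_{4n+3}A_{4n+2}A_{4n+1}A_{4n}=(A_{4n+3}A_{4n+2})(A_{4n+1}A_{4n})$; from $A_{4n+1},A_{4n+3}\equiv2I$ and $A_{4n},A_{4n+2}\equiv0\pmod4$ one gets $A_{4n+3}A_{4n+2}=8K_{4n+2}+16R$ and $A_{4n+1}A_{4n}=8K_{4n}+16S$ with integral $R,S$, hence $A_{4n+3}A_{4n+2}A_{4n+1}A_{4n}\equiv64\,K_{4n+2}K_{4n}\pmod{128}$; and $K_{4n+2}K_{4n}\equiv\begin{pmatrix}1&0\\0&0\end{pmatrix}\begin{pmatrix}0&0\\0&1\end{pmatrix}=0\pmod2$ then gives divisibility by $2^{7}$. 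For the second assertion, write $B_n:=A_{4n+3}A_{4n+2}A_{4n+1}A_{4n}=128X_n$; since $B_n=64\cdot(A_{4n+3}A_{4n+2}/8)(A_{4n+1}A_{4n}/8)$, one more level of the analysis (the $C_i$ mod $16$) should show the inner product is $\equiv\begin{pmatrix}2&0\\0&0\end{pmatrix}\pmod4$ for even $n$ and $\equiv\begin{pmatrix}0&0\\0&2\end{pmatrix}\pmod4$ for odd $n$, whence $X_n\equiv\begin{pmatrix}1&0\\0&0\end{pmatrix}\pmod2$ for $n$ even and $X_n\equiv\begin{pmatrix}0&0\\0&1\end{pmatrix}\pmod2$ for $n$ odd. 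Then $A_{4n+7}\cdots A_{4n}=(A_{4n+7}A_{4n+6}A_{4n+5}A_{4n+4})(A_{4n+3}A_{4n+2}A_{4n+1}A_{4n})=128X_{n+1}\cdot128X_n=2^{14}X_{n+1}X_n$, and $X_{n+1},X_n$ have opposite ``types'', so $X_{n+1}X_n\equiv0\pmod2$ regardless of the parity of $n$, which gives divisibility by $2^{15}$.

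The hard part will be the bare-hands $2$-adic bookkeeping in the lifting steps — evaluating $NE'_k+F'_kN$, the reductions of $K_{4m}$ and $K_{4m+2}$ mod $2$, and the type of each $X_n$ — together with the subtlety that individual entries of $E'_k,F'_k$ depend on $t\bmod4$ (and finer data on $t\bmod8$), so at every stage one must verify that this dependence cancels in the matrices that actually matter. Reducing everything to the nilpotent skeleton $N$, so that only a handful of correction terms survive each expansion, is what keeps the computation manageable; in practice it is convenient to carry it out with a computer algebra system.
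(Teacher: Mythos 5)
Your plan is correct and follows essentially the same route as the paper: classify the four-fold blocks $A_m$ by $m\bmod 4$, record their $2$-adic structure, and multiply two (resp.\ four) of them to land divisibility by $2^7$ (resp.\ $2^{15}$). The only difference is cosmetic — the paper records explicit mod-$2^k$ forms (15)--(17) for $A_m$ (tracking which entries are exactly odd via $\tau_m,\delta_m$ and then the alternation of $\lambda_n,\theta_n$), while you organize the identical information as iterated $2$-adic liftings from the nilpotent skeleton $N$; the intermediate facts you claim ($A_k\equiv 2kI\pmod 4$, the mod-$2$ shapes of $K_{4m}$ and $K_{4m+2}$, and the alternating type of $X_n$) all check out and correspond exactly to the paper's data.
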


\proof

Let $m$ be odd. Then both $6m+4$ and $6m+8$  are multiples of 2 but
not 4. Then we use~\eqref{eq14} to compute
$$
C_{4m+2}C_{4m+1} \equiv \left(\begin{array}{cc}
t^4+2t^2 + 3 & 2t\\
t(t^2+2)& 2
\end{array}\right) \equiv \left(\begin{array}{cc}
2 & 2\\
1\mbox{ or }3& 2
\end{array}\right) \; (\mathrm{mod}\; 4).
$$
Analogously,
$$
C_{4m+4}C_{4m+3} \equiv \left(\begin{array}{cc}
3t^2 + 1 & 2t\\
3t& 2
\end{array}\right) \equiv \left(\begin{array}{cc}
0 & 2\\
1\mbox{ or }3& 2
\end{array}\right) \; (\mathrm{mod}\; 4).
$$
After multiplying the last two equations we derive
\begin{equation}\label{eq15}
A_m =
\left(\begin{array}{cc}
2u_m & 4v_m\\
4w_m& 2t_m
\end{array}\right),
\end{equation}
where $u_m,v_m,t_m$ are odd integers and $w_m$ is an integer (odd or even).

If $m=4n$, we have that $6m+4$ is a multiple of 4 but not 8, and $6m+8$ is a
multiple of $8$. Let $2^{\delta_m}$ be the largest power of 2 that divides
$6m+8$. Then we compute
$$
C_{4m+2}C_{4m+1} = \left(\begin{array}{cc}
8e_{m1} & 4e_{m2}\\
e_{m3}& 4e_{m4}
\end{array}\right); \quad C_{4m+4}C_{4m+3} = \left(\begin{array}{cc}
2e_{m5} & 2^{\delta_m} e_{m6}\\
e_{m7}& 2^{\delta_m}e_{m8}
\end{array}\right),
$$
where $e_{m2}, \ldots e_{m8}$ are some odd integers and $e_{m1}$ shares the
parity with $n$. Therefore, in this case we get
\begin{equation}\label{eq16}
A_m = \left(\begin{array}{cc}
2^{\tau_m}u_m & 2^3v_m\\
2^3w_m& 2^2t_m
\end{array}\right),\quad \tau_m = \left\{ \begin{array}{rl}
3&\mbox{if } \delta_m = 3;\\
5&\mbox{if }\delta_m = 4;\\
4&\mbox{if }\delta_n >4
\end{array}\right.
\end{equation}
where $v_m,w_m,t_m$ are odd integers, and $u_m$ is integer (not necessarily
odd).

Consider the last case $m = 4n+2$. Then we have that $6m+8$ is a multiple of
4 but not 8, and $6m+4$ is a multiple of $8$. Let $2^{\delta_m}$ be the
largest power of 2 that divides $6m+4$. We compute
$$
C_{4m+2}C_{4m+1} = \left(\begin{array}{cc}
4e_{m1} & 2^{\delta_m}e_{m2}\\
e_{m3}& 2^{\delta_m}e_{m4}
\end{array}\right); \; \quad C_{4m+4}C_{4m+3} = \left(\begin{array}{cc}
2e_{m5} & 4 e_{m6}\\
e_{m7}& 4e_{m8}
\end{array}\right),
$$
where $e_{m1}, \ldots, e_{m8}$ are all odd integer numbers. Combined, these
formulae infer
\begin{equation}\label{eq17}
A_m = \left(\begin{array}{cc}
4u_m & 2^{\delta_m+1}v_m\\
8w_m& 2^{\delta_m}t_m
\end{array}\right),
\end{equation}
where $u_m,v_m$ and $t_m$ are odd integers and $w_m$ is any integer.

As a final step, we notice that $\delta_{4n}=3$ if and only if
$\delta_{4n+2}>3$. Then we combine~\eqref{eq15},\eqref{eq16}
and~\eqref{eq17} to get
$$
A_{4n+3}A_{4n+2}A_{4n+1}A_{4n} = \left(\begin{array}{cc}
\!\!2^{\lambda_n}U_n & 2^8V_n\!\!\\
\!\!2^8W_n& 2^{\theta_n}T_n\!\!
\end{array}\right),\; \lambda_n=\left\{\begin{array}{rl}
7&\mbox{if } \delta_{4n}=3;\\
8&\mbox{if } \delta_{4n}>3.
\end{array}\right.;\; \theta_n=\left\{\begin{array}{rl}
7&\mbox{if } \delta_{4n+2}=3;\\
8&\mbox{if } \delta_{4n+2}>3.
\end{array}\right.
$$
where $U_n, V_n, W_n, T_n$ are some integer numbers. That verifies
the first claim of the lemma. For the second claim, we need to
multiply two consecutive blocks $A_{4n+7}A_{4n+6}A_{4n+5}A_{4n+4}$
and $A_{4n+3}A_{4n+2}A_{4n+1}A_{4n}$ and notice that the consecutive
values of $\lambda_n$ and $\theta_n$ alternate.

\endproof

Lemma~\ref{lem9} implies that for all $n\in\ZZ$, $2^{15n}$ divides $
p_{32n+1}, q_{32n+1}, p_{32n+2}$ and $q_{32n+2}$. Finally, we get
that $q_{32n+3} = a_{32n+3} q_{32n+2} + \beta_{32n+3}q_{32n+1}$ is
also a multiple of $2^{15n}$. The same condition is satisfied for
$p_{32n+3}$.

In further discussion, we specify $k$ to be divisible by 8, i.e. $k = 8k_0$.
Then $q_{4k+2}$ and $p_{4k+2}$ are all multiples of $2^{15k_0}$, i.e.
$q_{4k+2} = 2^{15k_0} q^*_{4k+2}$. Then for all $m<k_0$,~\eqref{eq20} implies
\begin{equation}\label{eq22}
||q^*_{32m+2}\xi|| < \frac{8(24m+4)}{d_2(24m+4)t_k^3
2^{30m}q^*_{32m+2}}= \frac{32}{4^{15m}t_k^3 q^*_{32m+2}}.
\end{equation}

{\bf Remark.} Direct computations suggest that for infinitely many values of
$m$ the powers of 2 which divide both $p_{32m+2}$ and $q_{32m+2}$ are around
$2^{16m}$. Therefore, we believe that the power of $4$ in the denominator
of~\eqref{eq22} can be replaced by $4^{16m}$. However, we were not able to
prove it.

%\begin{theorem}
%For any $\tau < 3 + \frac{7\ln 2}{24}\approx 3.2022...$ there exists
%an effectively computable constant $c>0$ such that for all $N_0\in
%\NN$ there exist infinitely many cubic irrational numbers $x$ such
%that the inequality
%\begin{equation}\label{eq_th3}
%||qx|| < \frac{1}{H^\tau(x) q e^{c\sqrt{\ln q}}}
%\end{equation}
%has more than $N_0$ solutions $q\in \NN$.
%\end{theorem}

{\bf Proof of Theorem~\ref{th4}}. Fix a small value $\epsilon>0$. In view
of~\eqref{eq23}, consider $k_0$ large enough, so that $t_k <
e^{48(1+\epsilon)k_0}$ and $4^{\epsilon(1-\epsilon)k_0} > 32\cdot 3^4$.
Choose any $m$ in the range $(1-\epsilon)k_0< m < k_0$. Then for all $k_0$
large enough one has
$$
4^{-15(1-\epsilon)m} < e^{-48(1+\epsilon)k_0 \cdot
\frac{15\ln 4 \cdot (1-\epsilon)^2}{48(1+\epsilon)}} <
t_k^{-\frac{15\ln 2\cdot (1-\epsilon)^2}{24(1+\epsilon)}}.
$$
Therefore~\eqref{eq22} gives $||q^*_{32m+2}\xi|| < 32\cdot 3^4\cdot
4^{-15\epsilon m} (3t_k)^{-3-\frac{15\ln2\cdot
(1-\epsilon)^2}{24(1+\epsilon)}} (q^*_{32m+2})^{-1}$. In other
words, for all $\tau < 3 + \frac{15\ln 2}{24}\approx 3.4332...$
there exists $\epsilon>0$ such that for $k_0$ large enough and
$(1-\epsilon)k_0 < m< k_0$ one has
$$
||q^*_{32m+2}\xi|| < \frac{1}{4^{14\epsilon m}\cdot (3t_k)^\tau
q^*_{32m+2}}.
$$

Next, we estimate $q^*_{32m+2}$. By~\eqref{eq21}, $q_{32m+2} <
2^{32m+2} \prod_{i=1}^{32m+2} a_i^*$. Then we
use~\eqref{ast1}--~\eqref{ast4} to get
$$
q_{32m+2} \le \frac{2^{32m+2} \prod_{j=1}^{32m+2} a_j}{(48m+1)
\prod_{j=0}^{8m-1}(6j+1)(6j+5)}\stackrel{\eqref{no5}}{<}
\frac{2^{32m+2}3^{16m+1}t_k^{32m+2}(t_k^2+2)^{8m}
(32m+1)!!}{(48m+1)\prod_{j=0}^{8m-1}(6j+1)(6j+5)} .
$$
Since $t_k>8m$, $4j+1 < \frac23 (6j+5)$ and $4j+3 < \frac23 (6j+7)$, there
exists an effectively computable constant $C$ such that the last expression
is bounded from above by $C\cdot 2^{48m}t_k^{48m+2}(32m+1)$. Finally, we use
$t_k<e^{48(1+\epsilon)k_0}$, the fact that $q_{32m+2} = 2^{15m}q^*_{32m+2}$
and $(1-\epsilon)k_0<m<k_0$ to get that for $k_0$ large enough,
\begin{equation}\label{eq24}
q^*_{32m+2} < \frac{C\cdot
2^{48m}\left(e^{\frac{48(1+\epsilon)m}{1-\epsilon}}\right)^{48m+2}}{2^{15m}}
< e^{\frac{48^2(1+2\epsilon)}{1-\epsilon}m^2}.
\end{equation}
The last inequality yields
$$
m > \sqrt\frac{(1-\epsilon)\ln
(q^*_{32m+2})}{48^2(1+2\epsilon)}\quad\Longrightarrow\quad
4^{14\epsilon m}
> e^{\sqrt{\frac{49\epsilon^2(1-\epsilon)}{24^2(1+2\epsilon)}\ln
(q^*_{32m+2})}}.
$$

Choose $k_0$ to be large enough so that all the inequalities in the
arguments above take place and on top of that there are at least
$N_0$ values $m$ in the range $(1-\epsilon)k_0 < m< k_0$. Then the
height of a cubic irrational $\xi = x(t_k,1)$ from~\eqref{no5} is
$3t_k$ and therefore the inequality~\eqref{eq_th3} is satisfied for
all $q = q^*_{32m+2}$ and $c =
\frac{7\epsilon}{24}\sqrt{\frac{1-\epsilon}{1+2\epsilon}}$.

%The last thing we need to verify is that for different values of
%$k$, cubic irrationals $x = x(t_k,1)$ belong to different cubic
%extensions.
\endproof

In the end of this section we provide lower bounds for $||q_n\xi||$
that support Conjecture~B. Some of the arguments here will be
heuristic or numerical. First of all, notice that among all
continued fractions~\textnumero 1--6 only~\eqref{no5} has the
partial quotients which are cubic in $t$. The others are either
linear or quadratic. Therefore, for $\xi = x(t,a)$ from~\eqref{no5}
and large $t$, the expression $||q_{4k+2}\xi||$ will more likely
provide the smallest value in terms of the denominator $q_{4k+2}$
among other convergents that come from those continued fractions. We
also assume that $a=1$ because bigger absolute values of $a$ give
bigger determinants of matrices $C_n$ and hence bigger differences
$\left|\frac{p_n}{q_n} - \frac{p_{n+1}}{q_{n+1}}\right|$. Finally,
we assume that $t=t_k$ because in that case, as was shown above, the
continued fraction can be transformed in such a way that all the
values $\beta_i$, $i\le 4k+2$ can be significantly reduced to
$\beta^*_i = \frac{\beta_i}{d_2(\beta_i)}$ and the numerators and
denominators $(p_{4k+2}, q_{4k+2})$ of the convergents are divisible
by a large power of two. On top of that, numeric computations
suggest that such values of $t$ provide the largest possible partial
quotients $a_{4k+2}$.

From Lemma~\ref{lem10} we have that
$$
t_k \cdot \frac{\prod_{j=0}^{k-1}(6j+1)(6j+5)}{\prod_{j=0}^{k}
d_2(3j+1)d_2(3j+2)}
$$
is integer and therefore $q_{4k+4}>q_{4k+3}> a^*_{4k+3} q_{4k+2} \ge
(t_k^2+2)q_{4k+2}$. Now we estimate
$$
\max_{4k+3\le i\le 4k+5} \left|\frac{p_i}{q_i}-
\frac{p_{i+1}}{q_{i+1}}\right| \le\left| \frac{p_{4k+2}}{q_{4k+2}} -
\frac{p_{4k+3}}{q_{4k+3}}\right| \cdot |\det C_{4k+2}\det
C_{4k+3}\det C_{4k+4}| \cdot \frac{q_{4k+2}}{q_{4k+4}}
$$
In view of~\eqref{no5}, for $k\ge 2$ the last expression is smaller
than
$$
\left|\frac{p_{4k+2}}{q_{4k+2}} - \frac{p_{4k+3}}{q_{4k+3}}\right|
\cdot \frac{(6k+5)\cdot 2(3k+4)(6k+7)}{t_k^2+2} < \frac14
\left|\frac{p_{4k+2}}{q_{4k+2}} - \frac{p_{4k+3}}{q_{4k+3}}\right|
$$
The last inequality follows from~\eqref{eq23_1}. Finally, we use Lemma~\ref{lem7} to conclude that
$$
\left|\xi - \frac{p_{4k+2}}{q_{4k+2}}\right| \ge
\left|\frac{p_{4k+2}}{q_{4k+2}} - \frac{p_{4k+6}}{q_{4k+6}}\right|
\ge \left|\frac{p_{4k+2}}{q_{4k+2}} -
\frac{p_{4k+3}}{q_{4k+3}}\right| - \left|\frac{p_{4k+3}}{q_{4k+3}} -
\frac{p_{4k+6}}{q_{4k+6}}\right| \ge \frac14
\left|\frac{p_{4k+2}}{q_{4k+2}} - \frac{p_{4k+3}}{q_{4k+3}}\right|.
$$
The right hand side is bounded from below by
$$
\frac14
\left|\frac{p_{4k+2}}{q_{4k+2}} - \frac{p_{4k+3}}{q_{4k+3}}\right| = \frac{|\prod_{i=-1}^{4k+2}\det C_i^*|}{4q_{4k+2}q_{4k+3}} \ge \frac{P_k}{4q_{4k+2}q_{4k+3}}.
$$
We apply~\eqref{eq19} to get the estimate
\begin{equation}\label{eq25}
||q_{4k+2}\xi|| \ge \frac{P_k}{4\cdot 8 P_k t_k^3 q_{4k+2}} \gg
\frac{1}{t_k^3 q_{4k+2}}.
\end{equation}

Since $|p_{4k+2}q_{4k+3}-p_{4k+3}q_{4k+2}| = \beta_{4k+3}^*P_k$ is a power of
2, we have that $\gcd(p_{4k+2}, q_{4k+2})$ is also a power of two.
Computations suggest that
\begin{equation}\label{eq26}
\gcd(p_{4k+2}, q_{4k+2}) \asymp 2^{2k},
\end{equation}
but we only manage to prove a weaker inequality $\gcd(p_{32k+2},
q_{32k+2}) \ll k2^{17k}$, by using Lemmata~\ref{lem16}
and~\ref{lem9}.

%Indeed, letting $k = 8k_0$, Lemma~\ref{lem9} implies that
%$\gcd(p_{32k_0+3}, q_{32k_0+3}) \ge 2^{15k_0}$, therefore
%\begin{equation}\label{eq27}
%\gcd(p_{32k_0+2}, q_{32k_0+2}) \le \frac{P_{8k_0}}{\gcd(p_{32k_0+3},
%q_{32k_0+3})} \stackrel{\mathrm{Lemma}~\ref{lem16}}{\ll}
%k_02^{17k_0}.
%\end{equation}

Denote by $q^*_{4k+2}$ the ratio $\frac{q_{4k+2}}{\gcd(p_{4k+2},
q_{4k+2})}$. We will provide the lower bounds for
$||q^*_{4k+2}\xi||$ based on the numerical evidence (stronger) and
based on the rigorous proof (weaker). Numerically, from~\eqref{eq25}
and~\eqref{eq26} we have
$$
||q^*_{4k+2}\xi|| \gg \frac{1}{2^{4k}t_k^3 q^*_{4k+2}}.
$$
Then,~\eqref{eq23_1} gives that $2^{4k} = e^{0.96(6k)\cdot
\frac{2\ln2}{0.96\cdot 3}}\ll k^2 t_k^{\frac{2\ln2}{2.88}}$. Whence,
\begin{equation}\label{lowerb_num}
||q^*_{4k+2}\xi|| \gg \frac{1}{k^2\cdot
t_k^{3+\frac{2\ln2}{2.88}}q^*_{4k+2}}.
\end{equation}

Based on these arguments (not completely rigorous) and the
assumption that $p^*_{4k+2}/q^*_{4k+2}$ is a convergent of $\xi$ of
index comparable to $k$, we formulate our Conjecture~B.

Using~\eqref{eq23} instead of~\eqref{eq23_1} will result in a
slightly better power $3 + \frac23\ln 2 (1+\epsilon)$ of $t_k$ that
supports Conjecture~A. However, for very small $\epsilon$ the
inequality will only be valid for huge values of $k$. That makes the
task of computing the precise value of the constant $C$, as in
Conjecture~B, harder but theoretically possible.

%On the other hand, from~\eqref{eq25} and~\eqref{eq27} we rigorously
%derive that
%$$
%||q^*_{32k_0+2}x|| \gg \frac{1}{k_0^22^{34k}\cdot t_{8k_0}^3
%q^*_{32k_0+2}}.
%$$
%Analogously, we compute $2^{34k_0} = e^{0.96(48k_0)\cdot\frac{34\ln
%2}{0.96\cdot 48}} \ll k^{\frac{17\ln 2}{11.52}}
%t_{8k_0}^{\frac{17\ln2}{23.04}}$ and finally,
%$$
%||q^*_{4k+2}x|| \gg \frac{1}{k^{2+\frac{17\ln2}{11.52}}\cdot
%t_k^{3+\frac{17\ln2}{23.04}}q^*_{4k+2}}.
%$$

\section{Continued fractions for all cubic irrationals}\label{sec6}

In this section, for any real cubic irrational $\xi$ we construct a
M\"obius transform $\mu$ and a continued fraction of the
form~\textnumero 4 that converges to $\mu(\xi)$. This will confirm
Theorem~\ref{th2}.

Consider an arbitrary cubic algebraic $\xi\in \RR$ with the minimal
polynomial $P_\xi(x) = b_3x^3+b_2x^2+b_1x+b_0 \in \ZZ[x]$. One can
easily check that the minimal polynomial of $\eta_*:=
(3b_3\xi+b_2)^{-1}$ is of the form $B_3y^3 + B_2y^2 + 1=0$, where
$B_3$ and $B_2$ are integer. Finally, the minimal polynomial of
$\eta:= B_3\eta_*$ is $y^3 + B_2y^2 + B_3^2=0$, which is of the
form~\textnumero 4 with $t=-B_2$ and $a = -B_3^2$. That immediately
implies that for at least one root $\xi_*$ of $P_\xi(x)$, the number
$\eta = \frac{B_3}{b_3\xi_*+b_2}$ is linked with the continued
fraction expansion~\eqref{no4}. However, firstly we want the
continued fraction~\eqref{no4} to converge, but that is not
guaranteed for an arbitrary $\eta$. Secondly, we want to cover all
real roots of $P_\xi(x)$, not just one of them. In this section we
will resolve these two problems and provide a (generalised)
continued fraction expansion in a closed form for all cubic
irrationals $\xi\in\RR$.

Let $\xi$ be a real root of the cubic polynomial $P(x)$. Consider
$\eta\in\RR$ such that $\xi = \frac{u\eta+v}{s\eta+w}$. We will find
the conditions on $u,v,s,w\in\ZZ$ such that the coefficient at $y$
of the minimal polynomial $P_\eta(y)$ of $\eta$ equals zero. The
minimal polynomial $P_\eta$ is
\begin{equation}\label{eq29}
b_3(uy+v)^3 + b_2(uy+v)^2(sy+w) + b_1(uy+v)(sy+w)^2 + b_0(sy+w)^3 =
0.
\end{equation}
That is a cubic polynomial in $y$ and its coefficient at $y$ is
$$
3b_3uv^2 + 2b_2uvw+b_2v^2s + b_1uw^2 + 2b_1vsw+3b_0sw^2.
$$
Equating this coefficient to zero gives
$$
u(3b_3v^2+2b_2vw+b_1w^2) = -s(3b_0w^2 + 2b_1vw+b_2v^2).
$$
The solutions of this equation for arbitrary $v,w\in\ZZ$ are
\begin{equation}\label{eq28}
\begin{array}{rl}
s\!\!\! &= 3b_3v^2 + 2b_2vw+b_1w^2 =
w^2P_\xi'\left(\frac{v}{w}\right),\\[2ex]
u\!\!\! &= -v^2 \frac{d}{dx}(x^3P_\xi(1/x))\big|_{x = w/v} =
vwP_\xi'\left(\frac{v}{w}\right) -
3w^2P_\xi\left(\frac{v}{w}\right).
\end{array}
\end{equation}
To make the notation shorter, we will write $P$ instead of $P(v/w)$
and $P'$ instead of $P'(v/w)$.

With $u$ and $s$ as in~\eqref{eq28}, the free coefficient of~\eqref{eq29} is
obviously $w^3P$. Now, we compute the coefficient of this polynomial at
$y^2$: {\footnotesize$$ 3b_3v(vwP'-3w^2P)^2+
2b_2w^2vP'(vtP'-3w^2P)+b_2w(vwP'-3w^2P)^2 +
2b_1w^3P'(vwP'-3w^2P)+b_1vw^4(P')^2+3b_0w^5(P')^2
$$}\vspace{-3ex}
$$
= (3w^5P)(P')^2 - (6w^5P')PP' + 9(3b_3v + b_2w)w^4P^2
$$
\begin{equation}\label{eq39}
= 3w^3P((3b_3b_1 - b_2^2)v^2 +
(9b_3b_0-b_2b_1)vw+(3b_2b_0-b_1^2)w^2) =: 3w^5 PR(v/w).
\end{equation}
We verify that $R$ is not the constant zero. Indeed, if that is the
case, we must have $3b_3b_1=b_2^2, 9b_3b_0=b_2b_1$ and
$3b_2b_0=b_1^2$. The solutions of this system of equations are
$b_0=27\beta, b_1=27\beta\gamma, b_2=9\beta\gamma^2$ and $b_3 =
\beta\gamma^3$ for some $\beta,\gamma\in\QQ$. However, in this case
the polynomial $P_\xi(x)$ has a root $\frac{-3}{\gamma}$ which
contradicts the irreducibility of $P_\xi$.

In a similar way, we compute the leading coefficient
of~\eqref{eq29}:
$$
b_3(vwP'-3w^2P)^3 +
b_2w^2P'(vwP'-3w^2P)^2+b_1w^4(P')^2(vwP'-3w^2P)+b_0w^6(P')^3
$$$$
= (w^6P)(P')^3 - (3w^6P')P(P')^2 + 9(3b_3v+b_2w)w^5P'P^2 - 27w^6b_3P^3
$$
\begin{equation}\label{eq30}
\begin{array}{l}
= w^3P ((-18b_3b_2b_1+2b_2^3+27b_3^2b_0)v^3 +
(3b_2^2b_1-18b_3b_1^2+27b_3b_2b_0)v^2w\\[1ex]
\hspace{20ex}+ (18b_2^2b_0-3b_2b_1^2-27b_3b_1b_0)vw^2 +
(9b_2b_1b_0-2b_1^3-27b_3b_0^2)w^3).
\end{array}
\end{equation}
Therefore the leading coefficient can be written as $w^6
P(v/w)Q(v/w)$. The upshot is that the minimal polynomial of $\eta$
is $w^3Q(v/w)y^3 + 3w^2R(v/w)y^2 + 1$. Finally, we make the change
of variables $y\mapsto -w^3Q(v/w) y$ and the minimal polynomial of
the new value $\eta$ is
\begin{equation}\label{eq31}
y^3 - 3w^2R(v/w)y^2 - (w^3Q(v/w))^2.
\end{equation}
It is of the form~\textnumero 4 where $t = 3w^2R(v/w)$ and $a =
(w^3Q(v/w))^2$.

To find the roots of $Q$, we observe that the leading term of
$P_\eta$ equals zero if $\frac{u}{s} = \xi$ or
$$
\frac{-b_2z^2 - 2b_1z-3b_0}{3b_3z^2+2b_2z+b_1} = \xi,\qquad z =
\frac{v}{w}.
$$
This equation is quadratic in $z$. It is easy to verify that $z=\xi$
is its solution, which corresponds to the factor $w^3P$
in~\eqref{eq30}. Then the other root is
\begin{equation}\label{zx}
z(\xi) = -\frac{2(b_2\xi+b_1)}{3b_3\xi+b_2} - \xi.
\end{equation}
In particular, this means that the number of real roots of the
polynomial $Q$ coincides with the number of them for $P_\xi$, and
$z(x)$ is a bijection between the roots of $P$ and $Q$. Also notice
that $z(\xi)\neq \xi$ because otherwise the degree of $x$ is at most
2.

\begin{lemma}\label{lem11}
Let $a,t\in\CC$ be such that $|t|>2|a|^{1/3}$. Then the equation
\begin{equation}\label{lem11_eq}
x^3 - tx^2-a=0
\end{equation}
has exactly one root $\xi$ that satisfies $|\xi| > |a|^{1/3}$ and
the continued fraction~\textnumero 4 corresponds to $\xi$.
\end{lemma}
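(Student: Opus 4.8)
The plan is to follow the proof of Lemma~\ref{lem15} almost verbatim, with the circle $\{|x|=|a|^{1/2}\}$ used there replaced by $\{|x|=|a|^{1/3}\}$. First I would record the asymptotics: the continued fraction~\eqref{no4} has zeroth partial quotient equal to $t$, so the algebraic Laurent series $x(t)$ it represents satisfies $x(t)=t+O(t^{-1})$ near $\infty$. Viewing $x$ as a holomorphic function on a neighbourhood of $\infty$ as in Section~\ref{ssec_1_2}, this gives $|x(t)|\to\infty$ as $t\to\infty$; and wherever the convergents of~\eqref{no4} converge uniformly, their limit is a root of $x^3-tx^2-a=0$ that depends continuously (indeed analytically) on $t$ and tends to $\infty$ together with $t$.

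Next I would separate that root from the other two by a Rouch\'e estimate. One may assume $a\ne 0$ (if $a=0$ the equation is $x^2(x-t)=0$ and the claim is immediate with $x_0=t$). Fix $t$ with $|t|>2|a|^{1/3}$ and put $\gamma=\{x\in\CC:|x|=|a|^{1/3}\}$. On $\gamma$,
\begin{equation*}
|x^3-a|\le |x|^3+|a|=2|a|=2|a|^{1/3}\,|a|^{2/3}<|t|\,|a|^{2/3}=|t|\,|x|^2,
\end{equation*}
where the strict step is exactly the hypothesis $2|a|^{1/3}<|t|$. Comparing $x^3-tx^2-a$ with $-tx^2$, Rouch\'e's theorem shows $x^3-tx^2-a$ has the same number of zeros (with multiplicity) inside $\gamma$ as $-tx^2$, namely two; and since $|x^3-tx^2-a|\ge|t|\,|x|^2-|x^3-a|>0$ on $\gamma$, there are no zeros on $\gamma$. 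As the cubic has three roots in $\CC$, exactly one root $x_0$ satisfies $|x_0|>|a|^{1/3}$.

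Finally I would glue the two halves together as in Lemma~\ref{lem15}: with $a$ fixed, as $t\to\infty$ inside the connected region $\{|t|>2|a|^{1/3}\}$ the two ``small'' roots stay trapped in the compact disc $\{|x|\le|a|^{1/3}\}$, so neither of them can be the root escaping to $\infty$; hence the root to which~\eqref{no4} corresponds equals $x_0$ throughout that region. The one point that needs a little care is this last analytic-continuation/single-valuedness step --- checking that the branch picked out by the continued fraction is one and the same $x_0$ over all of $\{|t|>2|a|^{1/3}\}$ --- but it goes exactly as in Lemma~\ref{lem15}, since $x_0$ is the only root that ever leaves the fixed compact disc and so monodromy cannot swap it with the two trapped roots. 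No new computation is involved; the only thing to watch is keeping the Rouch\'e inequality strict, which holds because $|t|>2|a|^{1/3}$ is a strict inequality.
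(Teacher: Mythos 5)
Your proof is correct and follows essentially the same route as the paper: the same Rouch\'e estimate on the circle $|x|=|a|^{1/3}$ (indeed the identical inequality $|x^3-a|\le 2|a| < |t||x|^2$), together with the same continuity argument identifying the continued-fraction branch with the unique root escaping to infinity. The only cosmetic difference is that the paper justifies the final identification by noting that the sum of the three roots equals $t$, whereas you phrase it as a monodromy/trapping observation, but these are the same point.
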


\proof Let $a$ be fixed. Since the leading term of the Laurent
series $x(t)$ from~\eqref{no4} is $t$, as soon as $x(t)$ converges,
the limit tends to infinity as $t\to\infty$. Moreover, for $t>T_0$,
where $T_0$ is the radius of convergence of $x(t)$, this function is
analytic and therefore continuous. Hence, $x(t)$ corresponds to the
root of the equation in $x^3 - tx^2-a=0$ which continuously tends to
infinity as $t\to\infty$.

Let $x$ be any complex number that satisfies $|x| = |a|^{1/3}$. Then
we have $|x^3 - a| \le 2|a| < |tx^2|$. Therefore the number of roots
of~\eqref{lem11_eq} inside $|x|< |a|^{1/3}$ is the same as the
number of roots of  $tx^2=0$ in the same region, i.e. it is two. We
thus get that there is only one root $\xi$ of~\eqref{lem11_eq}
outside that circle. Then, as $t\to\infty$, this root $\xi(t)$
continuously tends to infinity, because the other two roots can not
leave the circle $|x|<|a|^{1/3}$ and the sum of three roots
is~$t$.\endproof

\begin{lemma}\label{lem12}
Let $\xi$ be a root of the polynomial $P(x)$ and $z = z(\xi)$ given
by~\eqref{zx}. Define $d := |z-\xi|$. Suppose that integers $v$ and
$w$ are such that $3|R(v/w)|> 2|Q(v/w)|^{2/3}$, $\left|\frac{v}{w} -
z\right|<\frac{d}{2}$ and $\frac{d}{2 |s\xi-u|}>|w|\cdot
|Q(v/w)|^{2/3}$ where $u$ and $s$ are given by~\eqref{eq28}. Then
the continued fraction of~\eqref{eq31} corresponds to the root
$\eta$ such that $\xi = \frac{u\eta+v}{s\eta+w}$.
\end{lemma}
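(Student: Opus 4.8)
The plan is to deduce the statement from Lemma~\ref{lem11} applied to~\eqref{eq31}. Recall that, by the computations preceding that lemma, \eqref{eq31} has the shape $y^3-ty^2-a$ with $t=3w^2R(v/w)$ and $a=(w^3Q(v/w))^2$, and that $w^3Q(v/w)$ is a polynomial in $v,w$ with integer coefficients; it is moreover nonzero, since $Q$ is the minimal polynomial of the cubic irrational $z(x)$ and therefore has no rational root, while $v/w\in\QQ$. The first step is to note that the first hypothesis $3|R(v/w)|>2|Q(v/w)|^{2/3}$ is precisely the inequality $|t|=3w^2|R(v/w)|>2w^2|Q(v/w)|^{2/3}=2|a|^{1/3}$, so Lemma~\ref{lem11} applies: \eqref{eq31} has a unique root $y_0$ with $|y_0|>|a|^{1/3}$, and the continued fraction~\eqref{no4} corresponds to $y_0$.

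It then remains to identify the root of~\eqref{eq31} attached to the chosen root $x$ of $P_\xi$ and to show that its absolute value exceeds $|a|^{1/3}$, for then it must equal $y_0$. Solving $x=\frac{uy+v}{sy+w}$ for $y$ and applying the rescaling $y\mapsto-w^3Q(v/w)\,y$ introduced just before~\eqref{eq31}, one finds that the root of~\eqref{eq31} corresponding to $x$ is
$$
Y=\frac{-w^3Q(v/w)\,(wx-v)}{u-sx},
$$
which is well defined since $u\ne sx$: by~\eqref{zx}, $u=sx$ would force $v/w$ to be equal to $x$ or to $z$, impossible for rational $v/w$. Using $wx-v=w(x-v/w)$ and $|a|^{1/3}=|w|^2|Q(v/w)|^{2/3}$, the desired inequality $|Y|>|a|^{1/3}$ is equivalent to
$$
\left|x-\frac{v}{w}\right|>\frac{|u-sx|}{|w|^2\,|Q(v/w)|^{1/3}}.
$$

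To establish this I would first invoke the triangle inequality: since $|v/w-z|<d/2$ with $d=|x-z|$, we get $|x-v/w|\ge d-|v/w-z|>d/2$. Next, the third hypothesis rearranges to $d/2>|w|\,|Q(v/w)|^{2/3}\,|u-sx|$, whence $|x-v/w|>|w|\,|Q(v/w)|^{2/3}\,|u-sx|$. Finally, since $|w^3Q(v/w)|\ge 1$, one has $|w|\,|Q(v/w)|^{2/3}=|w^3Q(v/w)|\cdot\big(|w|^2|Q(v/w)|^{1/3}\big)^{-1}\ge\big(|w|^2|Q(v/w)|^{1/3}\big)^{-1}$, and combining the last two displays yields exactly the required estimate. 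Hence $|Y|>|a|^{1/3}$, so $Y=y_0$ by the uniqueness part of Lemma~\ref{lem11}, proving the lemma.

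The argument is essentially bookkeeping. The two points that need care are keeping track of the rescaling when pinning down $Y$, so that one is really aiming at the root associated with $x$, and the integrality observation $|w^3Q(v/w)|\ge 1$, which is exactly what allows the crude bound $d/2$ to dominate the target $|u-sx|/(|w|^2|Q(v/w)|^{1/3})$. Beyond these I do not anticipate any genuine obstacle.
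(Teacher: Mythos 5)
Your proof is correct and follows essentially the same route as the paper's: locate the root of~\eqref{eq31} that corresponds to $x$, bound its absolute value from below by $|a|^{1/3}$, and invoke the uniqueness in Lemma~\ref{lem11}. The added value of your write-up is that it fixes a small but real sloppiness in the paper's argument. The paper solves $x=\frac{uy+v}{sy+w}$ and bounds $|y|=\big|\tfrac{v-wx}{sx-u}\big|$, but this $y$ is a root of $w^3Q(v/w)\,y^3+3w^2R(v/w)\,y^2+1=0$, \emph{not} of~\eqref{eq31}; the root of~\eqref{eq31} attached to $x$ is the rescaled $Y=-w^3Q(v/w)\,y$, which is what Lemma~\ref{lem11} needs. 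Passing from $|y|>|a|^{1/3}$ to $|Y|>|a|^{1/3}$ silently uses $|w^3Q(v/w)|\ge 1$, which holds precisely because $w^3Q(v/w)$ is a nonzero integer — an observation the paper never makes. You state the rescaling explicitly, carry it through cleanly, and isolate the integrality fact as the step that makes the crude bound $d/2$ suffice; you also supply the (easy but omitted) checks that $u\ne sx$ and $w^3Q(v/w)\ne 0$. (Minor note: the paper's phrase ``the second inequality is due to $|\tfrac{v}{w}-x|>\tfrac d2$'' is a typo — that justifies the first inequality; the second one comes from the third hypothesis on $d/(2|sx-u|)$, exactly as you use it.)
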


\proof Solving $\xi = \frac{u\eta+v}{s\eta+w}$ for $\eta$ gives
$$
|\eta| = \left|\frac{v-w\xi}{s\xi-u}\right| > \frac{dw}{2|s\xi-u|} >
\left|w^3Q\left(\frac{v}{w}\right)\right|^{2/3}.
$$
The second inequality is due to the fact that $\big|
\frac{v}{w}-\xi\big| \ge |z-\xi| - \big|z - \frac{v}{w}\big| >
\frac{d}{2}$. In view of~\eqref{eq31}, the right hand side of this
inequality is $|a|^{1/3}$. We also verify that the coefficients $t$
and $a$ satisfy $|t|>2|a|^{1/3}$: $3|w^2R(v/w)| >
2|w^3Q(v/w)|^{2/3}$. Therefore, all the conditions of
Lemma~\ref{lem11} are satisfied and $\eta$ corresponds to the
continued fraction~\eqref{no4} of~\eqref{eq31}.
\endproof

The last lemma shows that, as soon as we can find integer parameters
$v$ and $w$ that satisfy the lemma's conditions, we can provide a
continued fraction for $\eta = \frac{v-w\xi}{s\xi-u}$ where $\xi$ is
any given root of the cubic polynomial $P$. We still need to verify
that there exist $v$ and $w$ such that the continued
fraction~\eqref{no4} which corresponds to $\eta$, uniformly
converges and hence equals $\eta$. In order to do that, we first
provide the conditions on $a$ and $t$ that guarantee that the
continued fraction~\eqref{no4} converge. Here we proceed in a
similar way as for the one~\eqref{no5} in Section~\ref{sec4}.

Denote
$$
S_k:= \left(\begin{array}{cc} p_{4k}&q_{4k}\\
p_{4k-1}&q_{4k-1}
\end{array}\right);\quad T_k:= \left(\begin{array}{cc} p_{4k}&q_{4k}\\
p_{4k-4}&q_{4k-4}
\end{array}\right);\quad U_k:= \left(\begin{array}{cc} p_{4k-3}&q_{4k-3}\\
p_{4k-4}&q_{4k-4}
\end{array}
\right)
$$
From~\eqref{no4} and the recurrent formulae for convergents we get that
$S_{k+1} = A_kS_k$ where {\footnotesize
\begin{equation}
\begin{array}{rl}
A_k = &
\left(\begin{array}{cc} \!\!(8k+9)t&\hspace{-1ex} 3(12k+11)(6k+7)a\!\!\\
1&0
\end{array}
\right)
\left(\begin{array}{cc} \!\!2(8k+7)t^2&\hspace{-1ex}3(12k+7)(6k+5)a\!\!\\
1&0
\end{array}
\right)\\
&
\left(\begin{array}{cc} \!\!(8k+5)t&\hspace{-1ex}3(12k+5)(3k+2)a\!\!\\
1&0
\end{array}
\right)
\left(\begin{array}{cc} \!\!(8k+3)t^2&\hspace{-1ex} 3(12k+1)(3k+1)a\!\!\\
1&0
\end{array}
\right)
\end{array}
\end{equation}}
We also denote the four matrices involved in the product above by $C_{4k+1},
C_{4k+2}, C_{4k+3}$ and $C_{4k+4}$ so that $A_k =
C_{4k+4}C_{4k+3}C_{4k+2}C_{4k+1}$. Then the relation between $T_{k+1}$ and
$S_k$ is
\begin{equation}\label{eq33}
T_{k+1} = \left(\begin{array}{cc}
a_{k11}&a_{k12}\\
1&0
\end{array}
\right) S_k
\end{equation}
where $a_{k11}$ and $a_{k12} $ are the corresponding entries of the
matrix $A_k$. One then computes
\begin{equation}\label{eq43}
\begin{array}{rl}
a_{k11} = 2(8k+3)(8k+5)(8k+7)(8k+9)t^6 &+ 18(8k+5)(8k+7)(36k^2+55k+16)at^3\\
&+ 9(12k+5)(12k+11)(3k+2)(6k+7)a^2
\end{array};
\end{equation}$$
a_{k12} = 6(12k+1)(3k+1)(8k+7)at((8k+5)(8k+9)t^3 + 6(36k^2+63k+25)a).
$$

Next, we relate $U_k$ and $S_k$: $S_k = C_{4k}C_{4k-1}C_{4k-2}U_k$ or
$$
U_k = C^{-1}_{4k-2}C^{-1}_{4k-1}C^{-1}_{4k}S_k = \frac{B_k}{d}S_k,
$$
where, by~\eqref{no4}, $d = -27(12k-7)(12k-5)(12k-1)(3k-1)(6k-1)(6k+1)a^3$ and
$$
B_k = \left(\begin{array}{cc} 0&\hspace{-1.5ex}-3(12k-7)(3k-1)a\\
-1&(8k-3)t
\end{array}
\right) \left(\begin{array}{cc} 0&\hspace{-1.5ex}-3(12k-5)(6k-1)a\\
-1&2(8k-1)t^2
\end{array}\right) \left(\begin{array}{cc} 0&\hspace{-1.5ex}-3(12k-1)(6k+1)a\\
-1&(8k+1)t
\end{array}
\right).
$$
Then the relation between $T_k$ and $S_k$ is
\begin{equation}\label{eq32}
T_k = \left(\begin{array}{cc} 1&0\\
b_{k21}/d&b_{k22}/d
\end{array}
\right) S_k.
\end{equation}
where $b_{k21}$ and $b_{k22} $ are the corresponding components of
the matrix $B_k$. We compute
$$
b_{k21} = -3(12k-5)(6k-1)a - 2(8k-3)(8k-1)t^3,
$$$$
b_{k22} = 2(8k-1)t((8k-3)(8k+1)t^3+6(36k^2-9k-2)a) =: 2(8k-1)tp(k).
$$
Finally, we use~\eqref{eq32} and~\eqref{eq33} to relate $T_k$ and $T_{k+1}$:
\begin{equation}\label{eq34}
T_{k+1} = \left(\begin{array}{cc}
a_{k11} - a_{k12} \frac{b_{k21}}{b_{k22}}&\frac{da_{k12}}{b_{k22}}\\
1&0
\end{array}
\right) T_k.
\end{equation}
We then compute
{\footnotesize
\begin{equation}\label{eq37}
\frac{da_{k12}}{b_{k22}} = -\frac{81(12k+1)(12k-1)(12k-5)(12k-7)(3k-1)(3k+1)(6k-1)(6k+1)(8k+7)a^4p(k+1)}{(8k-1)p(k)}.
\end{equation}}

\begin{lemma}\label{lem13}
Let $a\in\ZZ$ be positive and $t$ satisfy $12a\le |t|^3$. Then
$q_{4k+4}$ and $q_{4k}$ satisfy the relation
$$
q_{4k+4} > (8k+3)(8k+5)(8k+7)(8k+9)(t^3+2a)^2 q_{4k}.
$$
\end{lemma}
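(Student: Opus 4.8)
The plan is to mimic the proof of Lemma~\ref{lem6} from Section~\ref{sec5}. First I would record that all entries of the matrices $C_i$ appearing in the product $A_k = C_{4k+4}C_{4k+3}C_{4k+2}C_{4k+1}$ are positive for $t\in\RR_{\ge 1}$ (and more generally for the $t$ of interest), so that $q_n>0$ for all $n$ and the sequence $(q_n)$ is strictly increasing. In particular $q_{4k-1}<q_{4k}$, which will let us bound the ``lower-order'' convergent $q_{4k-1}$ (or $q_{4k+3}$) by $q_{4k}$ later.

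Next I would use~\eqref{eq33}, namely $T_{k+1} = \left(\begin{smallmatrix} a_{k11} & a_{k12} \\ 1 & 0\end{smallmatrix}\right) S_k$, together with the recurrence $q_{4k+3} = a_{4k+3}q_{4k+2}+\beta_{4k+3}q_{4k+1}$ and so on, to write $q_{4k+4} = a_{k11}q_{4k} + a_{k12}q_{4k-1}$. Since the off-diagonal term $a_{k12}q_{4k-1}>0$, this already gives $q_{4k+4}>a_{k11}q_{4k}$, so it suffices to show $a_{k11}\ge (8k+3)(8k+5)(8k+7)(8k+9)(t^3+2a)^2 q_{4k}$ — wait, more precisely it suffices to show $a_{k11}>(8k+3)(8k+5)(8k+7)(8k+9)(t^3+2a)^2$ as a polynomial inequality in $t$. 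Now I would expand $(t^3+2a)^2 = t^6 + 4at^3 + 4a^2$, multiply by $2(8k+3)(8k+5)(8k+7)(8k+9)$, and compare coefficient-by-coefficient with the explicit formula~\eqref{eq43} for $a_{k11}$. The $t^6$ coefficients match exactly; for the $t^3$ and constant terms one checks that
$$
18(8k+5)(8k+7)(36k^2+55k+16) > 8(8k+3)(8k+5)(8k+7)(8k+9),
$$
$$
9(12k+5)(12k+11)(3k+2)(6k+7) > 8(8k+3)(8k+5)(8k+7)(8k+9),
$$
both of which hold for all $k\ge 0$ (the left sides are cubics resp.\ quartics in $k$ with larger leading coefficients, and one verifies the small cases directly). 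Hence each term of $a_{k11}$ dominates the corresponding term of $2(8k+3)(8k+5)(8k+7)(8k+9)(t^3+2a)^2$, using $a,t>0$; this gives $a_{k11}>(8k+3)(8k+5)(8k+7)(8k+9)(t^3+2a)^2$, and combined with $q_{4k+4}>a_{k11}q_{4k}$ the lemma follows.

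The main technical obstacle is not any single step but getting the bookkeeping of~\eqref{eq43} right and confirming the three polynomial-in-$k$ inequalities above; none is deep, but the expressions are bulky, so the verification $18(8k+5)(8k+7)(36k^2+55k+16)>8(8k+3)(8k+5)(8k+7)(8k+9)$ (equivalently $18(36k^2+55k+16)>8(8k+3)(8k+9)$, i.e.\ $648k^2+990k+288 > 512k^2+768k+216$, clearly true) and its companions should be spelled out with care. A minor point to keep in mind is that we only claimed $q_{4k+4}>a_{k11}q_{4k}$ by discarding the positive term $a_{k12}q_{4k-1}$; this is legitimate precisely because all entries of the $C_i$ are positive, which is where the hypothesis that $t$ and $a$ are positive (the sign conditions implicit in $12a\le|t|^3$ together with $a\in\ZZ_{>0}$) enters. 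No convergence input is needed here — the statement is a purely algebraic inequality between polynomials $q_{4k}(t,a)$.
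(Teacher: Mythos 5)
Your outline captures the positive-$t$ case but there are two genuine problems.

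\textbf{The negative-$t$ case is missing.} The hypothesis is only $a\in\ZZ_{>0}$ and $12a\le|t|^3$; the lemma explicitly covers $t<0$. Your opening step ``all entries of the $C_i$ are positive, so $q_n>0$ and $(q_n)$ is increasing'' is simply false when $t<0$: the matrices $C_{4k+1}, C_{4k+3}$ have entries $(8k+3)t^2$ and $(8k+5)t$ (and similarly for the others), so some entries change sign. You cannot simply discard the $a_{k12}q_{4k-1}$ term on positivity grounds, and the coefficient-by-coefficient comparison against $(t^3+2a)^2$ must be replaced by a genuine estimate using $12a\le|t|^3$ to control the sign and size of the $at^3$ term. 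The paper's proof treats $t<0$ separately: it shows $18(8k+5)(8k+7)(36k^2+55k+16)a|t|^3 < (8k+3)(8k+5)(8k+7)(8k+9)t^6$, concludes $a_{k11}>(8k+3)(8k+5)(8k+7)(8k+9)t^6$, shows $a_{k12}>0$ by a similar comparison, and finally replaces $t^6$ by $(t^3+2a)^2$ via $|t^3|>|t^3+2a|$ (which requires $12a\le|t|^3$). Your remark that the statement is ``a purely algebraic inequality... no convergence input is needed'' is right, but that does not rescue the positivity claim for $t<0$, and it also quietly abandons the very hypothesis $12a\le|t|^3$ that the $t<0$ case actually uses.

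\textbf{There is a factor-of-two error that makes your stated inequalities false.} You say you will ``multiply by $2(8k+3)(8k+5)(8k+7)(8k+9)$,'' i.e.\ compare $a_{k11}$ with $2(8k+3)(8k+5)(8k+7)(8k+9)(t^3+2a)^2$, which is twice the target. The resulting coefficient comparison at $a^2$ is
$$
9(12k+5)(12k+11)(3k+2)(6k+7)\;\;\text{vs.}\;\;8(8k+3)(8k+5)(8k+7)(8k+9),
$$
and this fails already at $k=0$ ($6930 < 7560$) and for all large $k$ (leading coefficients $23328$ vs.\ $32768$). The inequality you claim ``holds for all $k\ge0$'' is false. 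If you drop the spurious factor of $2$ and compare against $(8k+3)(8k+5)(8k+7)(8k+9)(t^3+2a)^2$ directly, the $t^6$ coefficient of $a_{k11}$ is already twice as large, and the $at^3$ and $a^2$ coefficients dominate $4(8k+3)(8k+5)(8k+7)(8k+9)$ rather than $8(\cdots)$; those comparisons do hold (e.g.\ $6930>3780$ at $k=0$, leading $23328>16384$). So the positive-$t$ case is salvageable, but the argument as written is wrong.

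Minor: your reduction $q_{4k+4}=a_{k11}q_{4k}+a_{k12}q_{4k-1}$ is the correct index (the paper's displayed formula has a typo $q_{4k-4}$), so that part is fine.
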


If $t$ is positive then~\eqref{eq33} together with~\eqref{eq43} and
$12a\le t^3$ imply
$$
q_{4k+4}>a_{k11}q_{4k} > (8k+3)(8k+5)(8k+7)(8k+9)(t^3+2a)^2 q_{4k}.
$$
Assume now that $t$ is negative. In that case, we compare the terms
of $a_{k11}$ and use $12a\le |t|^3$ to get for $k\ge 1$
$$
18(8k+5)(8k+7)(36k^2+55k+16)a|t|^3 < (8k+3)(8k+5)(8k+7)(8k+9)t^6.
$$
Therefore, $a_{k11} > (8k+3)(8k+5)(8k+7)(8k+9)t^6$. On top of that,
by comparing the terms in $a_{k12}$, one can verify that
$(8k+5)(8k+9)|t|^3 > 6(36k^2+63k+25)a$ and therefore $a_{k12}>0$. We
combine the last two inequalities for $a_{k11}$ and $a_{k12}$ and
get
$$
q_{4k+4} = a_{k11}q_{4k} + a_{k12}q_{4k-4} >
(8k+3)(8k+5)(8k+7)(8k+9)t^6 q_{4k}.
$$
The lemma then follows from the fact that $|t^3| > |t^3+2a|$.
\endproof

\begin{lemma}\label{lem14}
Let $a\in\ZZ$ and $t\in\RR$ be such that $0<12a\le |t|^3$. Then the
continued fraction~\eqref{no4} converges. Moreover, there exists
$k_0>0$ such that for all $k>k_0$ its limit $\xi = x(t,a)$ satisfies
$$
\left|\frac{p_{4k}}{q_{4k}} - \frac{p_{4k+4}}{q_{4k+4}}\right| <
\left|\xi - \frac{p_{4k}}{q_{4k}}\right| < 2
\left|\frac{p_{4k}}{q_{4k}} - \frac{p_{4k+4}}{q_{4k+4}}\right|.
$$
\end{lemma}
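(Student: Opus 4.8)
The plan is to mimic the proof of Lemma~\ref{lem7} almost verbatim, with the period-$4$ block relations~\eqref{eq34} and~\eqref{eq37} playing the roles of~\eqref{eq40} and~\eqref{eq9}, and with Lemma~\ref{lem13} in place of Lemma~\ref{lem6}. From $\det T_{k+1}=-\tfrac{da_{k12}}{b_{k22}}\det T_k$, which is read off from~\eqref{eq34}, together with $\tfrac{p_{4k}}{q_{4k}}-\tfrac{p_{4k-4}}{q_{4k-4}}=\tfrac{\det T_k}{q_{4k}q_{4k-4}}$, I first obtain
$$
\frac{p_{4k+4}}{q_{4k+4}}-\frac{p_{4k}}{q_{4k}}=-\frac{da_{k12}}{b_{k22}}\cdot\frac{q_{4k-4}}{q_{4k+4}}\left(\frac{p_{4k}}{q_{4k}}-\frac{p_{4k-4}}{q_{4k-4}}\right),
$$
the exact analogue of the displayed identity opening the proof of Lemma~\ref{lem7}.

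The second step is the key estimate: the contraction factor $\lambda_k:=\bigl|\tfrac{da_{k12}}{b_{k22}}\bigr|\cdot\tfrac{q_{4k-4}}{q_{4k+4}}$ is eventually less than $\tfrac12$. Applying Lemma~\ref{lem13} at the indices $k$ and $k-1$ gives $q_{4k-4}/q_{4k+4}<\bigl((8k-5)(8k-3)(8k-1)(8k+1)(8k+3)(8k+5)(8k+7)(8k+9)(t^3+2a)^4\bigr)^{-1}$; substituting this together with the explicit value~\eqref{eq37} of $da_{k12}/b_{k22}$ and cancelling the common factor $(8k+7)$, $\lambda_k$ becomes a ratio of two polynomials in $k$ of degree $8$, times $|p(k+1)|/|p(k)|$, times $81a^4/(t^3+2a)^4$. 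The polynomial ratio converges to $81\cdot12^4\cdot3^2\cdot6^2/8^8\approx32.4$; the factor $|p(k+1)|/|p(k)|$ converges to $1$ (and stays finite for every $k\ge1$) because the hypothesis $|t|^3\ge12a>0$ keeps $p(k)=(8k-3)(8k+1)t^3+6(36k^2-9k-2)a$ bounded away from $0$; and $|t|^3\ge12a$ forces $|t^3+2a|\ge10a$, whence $a^4/(t^3+2a)^4\le10^{-4}$. Therefore $\lambda_k\to c$ for some $c\le32.4\cdot10^{-4}<\tfrac12$, so $\lambda_k<\tfrac12$ for every $k$ larger than an effectively computable $k_0$.

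The third step is bookkeeping, identical to Lemma~\ref{lem7}. The coefficient $-\tfrac{da_{k12}}{b_{k22}}\cdot\tfrac{q_{4k-4}}{q_{4k+4}}$ is positive: all linear factors in~\eqref{eq37} are positive for $k\ge1$, the $q$'s are positive by Lemma~\ref{lem13}, and $p(k+1)/p(k)>0$ since $p$ does not change sign under the hypothesis. Hence the differences $\tfrac{p_{4k+4}}{q_{4k+4}}-\tfrac{p_{4k}}{q_{4k}}$ share one and the same sign and, for $k>k_0$, decrease in modulus by a factor exceeding $2$; so the series $\tfrac{p_{4(k_0+1)}}{q_{4(k_0+1)}}+\sum_{i>k_0}\bigl(\tfrac{p_{4i+4}}{q_{4i+4}}-\tfrac{p_{4i}}{q_{4i}}\bigr)$ converges, giving convergence of the subsequence $p_{4k}/q_{4k}$ and then of the whole continued fraction~\eqref{no4}, whose limit is the root $x=x(t,a)$ of~\eqref{lem11_eq} isolated in Lemma~\ref{lem11}, by the same uniform-convergence-and-analytic-continuation reasoning already used for~\eqref{no5}. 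Writing $x-\tfrac{p_{4k}}{q_{4k}}=\sum_{i\ge k}\bigl(\tfrac{p_{4i+4}}{q_{4i+4}}-\tfrac{p_{4i}}{q_{4i}}\bigr)$ as a one-signed sum whose successive terms shrink by a factor $>2$, the stated two-sided bound follows verbatim as in Lemma~\ref{lem7}.

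The main obstacle is the estimate of the second step. In Lemma~\ref{lem7} the corresponding bound~\eqref{eq41} held for every $k\in\NN$, whereas here two extra complications appear: the ratio $|p(k+1)|/|p(k)|$, which has no analogue in the~\eqref{no5} computation, and the possibility that $t$ is negative, so that $t^3+2a$ need not be comparable to $a$ in the naive way. Both are tamed by $|t|^3\ge12a$: it keeps $p(k)$ away from zero for all $k\ge1$ (so $|p(k+1)|/|p(k)|$ is bounded, with limit $1$) and yields $|t^3+2a|\ge10a$. After that the two degree-$8$ polynomial factors balance asymptotically and the $10^{-4}$ gain finishes the job, which is precisely why one obtains the clean inequality only for $k$ beyond an explicit threshold $k_0$, rather than for all $k$ as in Lemma~\ref{lem7}.
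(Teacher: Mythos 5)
Your proof is correct and follows the paper's argument essentially verbatim: you reproduce~\eqref{eq36} from~\eqref{eq34}, estimate the contraction factor via Lemma~\ref{lem13} applied at indices $k-1,k$ together with~\eqref{eq37}, compute the same asymptotic limit $\frac{3^{12}2^{10}}{2^{24}}\big(\tfrac{a}{t^3+2a}\big)^4$, and close with the telescoping-sum argument of Lemma~\ref{lem7}. The only (immaterial) difference is cosmetic: you deduce $|t^3+2a|\ge 10a$ from $|t|^3\ge 12a$ and use $a^4/(t^3+2a)^4\le 10^{-4}$, whereas the paper stops at the weaker observation that $5a<|t^3+2a|$ already suffices; and you make explicit the sign-constancy of $p(k)$ and the positivity of the coefficient in the recursion, which the paper defers to the proof of Lemma~\ref{lem13}.
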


\proof Equation~\eqref{eq34} infers that
\begin{equation}\label{eq36}
\frac{p_{4k+4}}{q_{4k+4}} - \frac{p_{4k}}{q_{4k}} = \frac{-da_{k12}q_{4k}q_{4k-4}}{b_{k22}q_{4k+4}q_{4k}}\left(\frac{p_{4k}}{q_{4k}} - \frac{p_{4k-4}}{q_{4k-4}}\right).
\end{equation}
In particular, since $d$ is negative and $a_{k12}$ and $b_{k22}$ are
positive numbers, as was shown in the proof of the previous lemma,
the differences $\frac{p_{4k+4}}{q_{4k+4}}-\frac{p_{4k}}{q_{4k}}$
share the same sign for all $k\ge 1$. Now we use Lemma~\ref{lem13}
and~\eqref{eq37} to estimate{\footnotesize$$
\frac{-da_{k11}q_{4k-4}}{b_{k22}q_{4k+4}} < \frac{
81(12k-1)(12k+1)(12k-5)(12k-7)(9k^2-1)(36k^2-1)a^4}{(8k-5)(8k-3)(8k-1)(8k+1)(8k+3)(8k+5)(8k+7)(8k+9)(t^3+2a)^4}
\cdot \frac{(8k+7)p(k+1)}{(8k-1)p(k)}.
$$}
Since $p(k)$ is a quadratic polynomial in $k$, the second fraction
in the last expression tends to one as $k\to\infty$. The first
fraction tends to $\frac{3^{12}2^{10}}{2^{24}}\cdot
\big(\frac{a}{t^3+2a}\big)^4$, which is strictly less than $\frac12$
as soon as $5a<|t^3+2a|$.

For the rest of the proof we proceed analogously to the proof of
Lemma~\ref{lem7}. We write $\frac{p_{4k+4}}{q_{4k+4}}$ as the sum
$$
\frac{p_{4k+4}}{q_{4k+4}} = \frac{p_0}{q_0} + \sum_{i=0}^k
\left(\frac{p_{4i+4}}{q_{4i+4}} - \frac{p_{4i}}{q_{4i}}\right)
$$
Since, starting from some $k\ge k_0$ the terms in the summation on
the right hand side decay by a factor at least 2, the limit of
$\frac{p_{4k+4}}{q_{4k+4}}$ as $k\to\infty$ exists, let's call it
$\xi$. Then the estimates on $\big|\xi - \frac{p_{4k}}{q_{4k}}\big|$
follow easily.
\endproof

Now we are ready to complete the proof of Theorem~\ref{th2}. Let
$\xi$ be a root of $P$. We need to construct the integers $v$ and
$w$ such that the resulting coefficients $a$ and $t$ in~\eqref{eq31}
satisfy all the conditions of Lemmata~\ref{lem13} and~\ref{lem14}.
Denote by $B$ the height of $\xi$, i.e. $B:=\max\{|b_0|, |b_1|,
|b_2|, |b_3|\}$.

We construct $v,w$ such that $v/w$ is very close to $z(\xi)$ which
is defined in~\eqref{zx}. By the Dirichlet theorem, there exist
$v,w\in\ZZ$ such that $w$ is arbitrarily large and
$$
\left|z - \frac{v}{w}\right|< \frac{1}{w^2}.
$$
Since $z$ is a root of the polynomial $Q$ from~\eqref{eq31}, we get
for $z$ close enough to $v/w$,
$$
|w^3Q(v/w)|\le w^3 \cdot 2\left|z-\frac{v}{w}\right|\cdot |Q'(z)|\le 2|Q'(z)|w.
$$

Now we estimate $\big|\frac{u}{s} - \xi\big|$ where $u$ and $s$ are
defined in~\eqref{eq28}.
$$
\left|\frac{u}{s} - \xi\right| = |D(v/w) - D(z)|,
$$
where $D(z)$ is the rational function $D(z):=
\frac{-b_2z^2-2b_1z-3b_0}{3b_3z^2+2b_2z+b_1}$. Since $z$ is cubic
irrational, it is not a singularity of $D$. We choose $v/w$ close
enough to $z$ so that the whole interval between $z$ and $v/w$ does
not contain singularities of $D$. Moreover, one can check hat
$D'(z)\neq0$, thus for $z$ close enough to $v/w$,
\begin{equation}\label{eq38}
|D(v/w) - D(z)| < 2\left|\frac{v}{w}-z\right| |D'(z)| < \frac{2|D'(z)|}{w^2}.
\end{equation}
Since $|s| = |w^2 P'(v/w)|\le 6Bw^2|z|^2$, we get that
$$
|u - s\xi| \le 12B|z^2\cdot D'(z)|
$$
and therefore for $w$ large enough, we get
$$
w|Q(v/w)|^{2/3} \le \frac{|2Q'(z)|^{2/3}}{w^{1/3}} \le
\frac{d}{24B|z^2\cdot D'(z)|} \le \frac{d}{2|u-s\xi|},
$$
where $d = |z-\xi|$.

Consider the polynomial $R$ from~\eqref{eq39}. Since it is not the
constant zero and has degree at most 2, $R(z)\neq 0$. Therefore, for
$v/w$ close enough to $z$ we have $|R(v/w)|> \frac12 |R(z)|>0$.
Therefore
$$
|3R(v/w)| > \frac32 |R(z)|.
$$
On the other hand,
$$
|2Q(v/w)|^{2/3} < \left|\frac{4Q'(z)}{w^2}\right|^{2/3}.
$$
Therefore, for large enough $w$ we get that $|3R(v/w)| >
|2Q(v/w)|^{2/3}$.
%Analogously, we verify that for $t$ large enough one has
%$$\frac{d}{2} > \left|\frac{2Q'(z)}{t}\right|^{2/3}> |2tQ(v/t)|^{2/3},$$
%where $d:=|z-x|$.

The last condition to check is one in Lemma~\ref{lem14} which is
written as $|12(w^3 Q(v/w))^2|<|3w^2R(v/w)|^3$. This is indeed true
because, for $w$ large enough,
$$
|12(w^3Q(v/w))^2| < 12\cdot 4(Q'(z))^2 w^2 < \frac{27}{8}|R(z)|^3
w^6 < |3w^2R(v/w)|^3.
$$

All the conditions of Lemmata~\ref{lem13} and~\ref{lem14} are
satisfied and therefore $\eta$, which solves the equation $\xi =
\frac{u\eta+v}{s\eta+w}$ admits a continued fraction expansion of
the form~\eqref{no4}. Moreover, that continued fraction converges to
$\eta$. This finished the proof of Theorem~\ref{th2}.
\appendix

\section{Formulae for coefficients of the remaining continued
fractions}\label{app1}

In Section~\ref{sec1} we only verified the continued fractions of
the cubic Laurent series~\textnumero 1 and~\textnumero 5. As
discussed in the Introduction, the continued fraction for the
series~\textnumero 2 follows from the one for~\textnumero 1. The
verification of the remaining continued fractions repeats all the
steps in Subsection~\ref{subsec32}. Therefore, here we only provide
the formulae for the coefficients $A_i, B_i, C_i$ and $D$ as well as
$a_i, \beta_i$. An interested reader can then verify them by
induction in the same way as for the algebraic series~\textnumero 5.

{\bf Algebraic series \textnumero 3: $x^3-tx^2-at=0$.}

$$
D = 4t^3 + 27at;
$$

{
$$
A_{4k+1} = -4t,\; B_{4k+1} = -(32k+8)t^2 - 9a,\; C_{4k+1} =
12(12k+1)(3k+1)at,
$$
$$
 a_{4k+1} = (8k+3)t,\;
\beta_{4k+1} = 3(12k+1)(3k+1)a;
$$
$$
A_{4k+2} = -4t,\; B_{4k+2} = -(32k+16)t^2 + 9a,\; C_{4k+2} =
12(12k+5)(3k+2)at;
$$
$$
a_{4k+2} = (8k+5)t,\; \beta_{4k+2} = 3(12k+5)(3k+2)a;
$$
$$
A_{4k+3} = -4t,\; B_{4k+3} = -(32k+24)t^2 - 9a,\; C_{4k+3} =
6(12k+7)(6k+5)a;
$$
$$
a_{4k+3} = 2(8k+7)t,\; \beta_{4k+3} = 3(12k+7)(6k+5)a;
$$
$$
A_{4k+4} = -2t,\; B_{4k+4} = -(32k+32)t^2 + 9a,\; C_{4k+4} =
12(12k+11)(6k+7)t;
$$
$$
a_{4k+4} = (8k+9)t,\; \beta_{4k+4} = 3(12k+11)(6k+7)a.
$$}

{\bf Algebraic series~\textnumero 4: $x^3-tx^2-a=0$.} As discussed
in the Introduction, the continued fraction for this series is
derived from that for the series~\textnumero 3.

{\bf Algebraic series \textnumero 6: $x^3+(t-2)x^2-2(t-2)x +
2(t-2)=0$.}
$$
D = (t-2)(t^2+6t+11);
$$
{\footnotesize
$$
A_{3k+1} = (-1)^{3k}((8k+1)t+(20k+1)),\; B_{3k+1} = -2(4k+1)^2t^2-
(96k^2+48k+5)t-(112k^2+56k+3),
$$$$
C_{3k+1} = (-1)^{3k}\cdot2(6k+1)(3k+1)((8k+3)t+(20k+9)),\; D_{3k+1}
= (4k+1)D;
$$
$$
 a_{3k+1} = (-1)^{3k}((4k+1)t+2k),\;
\beta_{3k+1} = 2(6k+1)(3k+1);
$$
$$
A_{3k+2} = (-1)^{3k+1}((8k+3)t+(20k+9)),\; B_{3k+2} =
-(4k+1)((8k+4)t^2+(24k+13)t-(8k+3)),
$$$$
C_{3k+2} = (-1)^{3k+1}\cdot 12(4k+1)^2(3k+2),\; D_{3k+2} = (4k+1)D;
$$
$$
a_{3k+2} = (-1)^{3k+1}(4k+3)(t^2+3t-1),\; \beta_{3k+2} =
6(4k+1)(3k+2);
$$
$$
A_{3k+3} = (-1)^{3k}\cdot 2,\; B_{3k+3} = -(8k+8)t^2 - (24k+23)t +
(8k+9),
$$$$
C_{3k+3} = (-1)^{3k}(18k+15)((8k+9)t+(20k+21)),\; D_{3k+3} = D;
$$
$$
a_{3k+3} = (-1)^{3k}((4k+5)t+(2k+3)),\; \beta_{3k+3} =
3(4k+5)(6k+5).
$$
}

\section{Continued fractions of cubic irrationals with very large partial
quotients}\label{app2}

Here we present the list of cubic numbers $\xi\in\RR$ whose
continued fractions have at least one partial quotient $a_n$ such
that
\begin{equation}\label{app2_eq}
a_n\ge 2H(\xi)^{\tau_1} n^2,\qquad \tau_1 = 3 + \frac{2\ln 2}{2.88}
\approx 3.4814...
\end{equation}
If two or more equivalent numbers satisfy~\eqref{app2_eq}, we
present only one of them with the largest value of $C:=
\frac{a_n}{H(\xi)^{\tau_1} n^2}$. Here, we say that $\xi$ and
$\zeta$ are equivalent if their continued fractions eventually
coincide.

This list is produced by a computer search of roots of cubic
polynomials with small height. To simplify the algorithm, we only go
through polynomials that take opposite signs at 0 and 1 and only
consider their largest real root in the interval $(0,1)$. The code
was implemented in C++ computer language with NTL library for long
arithmetic operations. It can be provided by request.

Under the above constraints, for all algebraic $\xi$ with $H(\xi)\le
2$ the first 10000 partial quotients were computed; for all $\xi$
with $H(\xi)\le 5$ the first $5000$ partial quotients; for all $\xi$
with $H(\xi)\le 10$ the first $1000$ partial quotients and for all
$\xi$ with $H(\xi)\le 100$ the first $50$ partial quotients were
computed. These calculations took around 19 hours 49 minutes on one
core of Ryzen 3700X CPU.

The resulting list is:
\begin{enumerate}
\item Root of $x^3+x^2+x-1=0$ has $\xi = [0;1,1,5,4,2,{\bf
305},\ldots]$. For $a_6(\xi)$ the value $\displaystyle C:=
\frac{a_6(x)}{6^2\cdot H(\xi)^{\tau_1}}$ equals $8.472\ldots$
\item Root of $2x^3+2x-1=0$ has $\xi = [0; 2, 2, 1, 3, 1, 1, 1, 1, 2, 1, 5, 456, 1, 30, 1, 3, 4,
{\bf 29866}, \ldots]$. For $a_{18}(\xi)$, $C = 8.253\ldots$
\item Root of $2x^3+2x^2+2x-1=0$ has $$\xi = [0; 2, 1, 11, 2, 3, 1, 23, 2, 3, 1, 1337, 2, 8, 3, 2, 1, 7, 4, 2, 2,
{\bf 87431},\ldots].$$ For $a_{21}(\xi)$, $C = 17.751\ldots$
\item Root of $x^3-2x^2-3x+1=0$ has $$\xi = [0; 3, 2, 26, 1, 6, 3, 3, 1, 2, 4, 92, 24, 2, 3, 2, 4, 2, 1, 16,
{\bf 40033},\ldots].$$ For $a_{20}(\xi)$, $C = 2.184\ldots$.
\item Root of $2x^3-2x^2+4x-3=0$ has
$$
\hspace{-6ex}\begin{array}{rl} \xi = &[0; 1, 4, 3, 7, 4, 2, 30, 1,
8, 3, 1, 1, 1, 9, 2, 2, 1, 3,
22986, 2, 1, 32, 8, 2, 1, 8, 55, 1, 5, 2, 28,\\
& 1, 5, 1, {\bf 1501790}, 1, 2, 1, 7, 6, 1, 1, 5, 2, 1, 6, 2, 2, 1,
2, 1,
1, 3, 1, 3, 1, 2, 4, 3, 1, 35657, 1,\\
& 17, 2, 15, 1, 1, 2, 1, 1, 5, 3, 2, 1, 1, 7, 2, 1, 7, 1, 3, 25,
49405, 1, 1, 3, 1, 1, 4, 1, 2, 15, 1, 2, 83,\\
& 1, 162, 2, 1, 1, 1, 2, 2, 1, 53460, 1, 6, 4, 3, 4, 13, 5, 15, 6,
1, 4, 1, 4, 1, 1, 2, 1, {\bf 16467250},\ldots].
\end{array}
$$
For $a_{35}(\xi)$, $C = 9.828\ldots$; for $a_{123}(\xi)$, $C =
8.726\ldots$
%\item Root of $3x^3+2x^2+4x-4=0$ has $$x = [0; 1, 1, 1, 1, 1, 10, 1, 4, 3, 1, 684, 1, 20, 5, 1, 2, {\bf 44799},
%\ldots]$$ For $a_{17}(x)$, $C = 1.243\ldots$.

\item Root of $7x^3+4x^2-4x-6=0$ has
$$
\hspace{-6ex}\begin{array}{rl} \xi = &[0; 1, 22, 1, 31, 2, 3, 1, 63,
1, 10, 1, 2, 1, 7, 1, 160905, 2, 1, 4, 58, 2, 2, 1, 2, 1, 7, 3, 1,
3, 1, 4,\\
& 3, 1, 47, 1, 214540, 1, 2, 9, 1, 45, 1, 3, 1, 48, 1, 21, 1, 9, 1,
8, 1, 2, 249610, 1, 1, 1, 1, 1, 3, 1,\\
& 1, 1, 1, 20, 1, 4, 19, 1, 2, 1, 1, 1, 1, 3, 4, 1, 1, 1, 1, 3,
345838, 1, 13, 1, 3, 3, 1, 1, 1, 1, 9, 1, 11,\\
& 7, 23, 5, 13, 1, 374230, 31, 6, 2, 1, 2, 5, 3, 1, 1, 7, 4, 1, 37,
{\bf 115270760},\ldots].
\end{array}
$$
For $a_{114}(\xi)$, $C = 10.134\ldots$
%\item Root of $2x^3+15x^2+9x-12=0$ has
%$$
%x = [0; 1, 1, 1, 2, 16, 1, 4, 1, 11, 1, 16, 1, 1, 1, 1, 3, 2, {\bf
%5465049}, \ldots]
%$$
%For $a_{18}(x)$, $C = 1.357\ldots$
\item Root of $14x^3+10x^2+8x-5=0$ has
$$
\hspace{-6ex}\begin{array}{rl} \xi = &[0; 2, 1, 2, 1, 1, 11, 1, 1,
4, 1, 1, 1, 10, 24, 6, 2, 8, 436745, 2, 1, 1, 16, 1, 29, 2, 1, 2, 2,
1,
1, 3,\\
& 34, 2, 1, 3, {\bf 28534040},\ldots].
\end{array}
$$
For $a_{36}(\xi)$, $C = 2.252\ldots$
%\item Root of $14x^3-15x^2-21x+4=0$ has
%$$
%x = [0; 5, 1, 3, 1, 4, 1, 16, 1, 3, 3, 5, 1, 1, 6, 3, {\bf
%16395148}, \ldots]
%$$
%For $a_{16}(x)$, $C = 1.597\ldots$
\item Root of $11x^3+21x^2+24x-30=0$ has
$$
\hspace{-6ex}\begin{array}{rl} \xi = &[0; 1, 2, 5, 1095, 2, 1, 2, 5,
1, 8, 2, 5, 1, 14, 1, 1, 1, 2, 11, 11, 1, 2, 4, 1, 1, 1, 1, 1, 9, 1,
6, 2,\\
& {\bf 829131361},\ldots].
\end{array}
$$
For $a_{33}(\xi)$, $C = 5.485\ldots$
%\item Root of $22x^3+21x^2-2x-31=0$ has
%$$
%\hspace{-6ex}\begin{array}{rl} x = &[ 0; 1, 8, 1, 2, 1, 2, 4, 2, 1,
%1, 6, 8, 3, 1, 1, 1, 1, 1, 1, 3, 1, 2, 1, 11, 1, 2, 7, 2, 4, 1, 1,
%2, 8, 1, 3, 4,\\
%& 3, {\bf384004431}, \ldots].
%\end{array}
%$$
%For $a_{38}(x)$, $C = 1.709\ldots$
%\item Root of $41x^3+7x^2-71x+18=0$ has
%$$
%x = [0; 3, 1, 2, 42, 5, 138, 2, 6, 1, 24, {\bf 540665553}, \ldots].
%$$
%For $a_{11}(x)$, $C = 1.604\ldots$
%\item Root of $14x^3+7x^2+87x-101=0$ has
%$$
%x = [0; 1, 19, 11, 1, 3, 4, 2, 7, 1, 4, 1, 38, 1, {\bf
%2455824227},\ldots].
%$$
%For $a_{14}(x)$, $C = 1.319\ldots$
\item Root of $44x^3+42x^2+24x-15=0$ has
$$
\hspace{-6ex}\begin{array}{rl} \xi = &[0; 2, 1, 10, 547, 1, 2, 5, 2,
1, 17, 1, 11, 1, 6, 1, 4, 2, 1, 5, 23, 2, 1, 1, 1, 4, 3, 4, 1, 13,
1,\\
& {\bf 1658262722},\ldots].
\end{array}
$$
For $a_{31}(\xi)$, $C = 3.277\ldots$.
\end{enumerate}

\bigskip
\noindent Dzmitry Badziahin\\ \noindent The University of Sydney\\
\noindent Camperdown 2006, NSW (Australia)\\
\noindent {\tt dzmitry.badziahin@sydney.edu.au}


\begin{thebibliography}{99}
%\bibitem{bad_sch_2020} D. Badziahin, J. Schleischitz. An improved bound in Wirsing's
%problem. {\em Trans. AMS} 374 (2021) 1847--1861.
%
%\bibitem{dav_sch_1969} H. Davenport, W.M. Schmidt.
%Approximation to real numbers by algebraic integers. {\em Acta
%Arith.} 15 (1969), 393--416.
%
%\bibitem{laurent_2003} M. Laurent.  Simultaneous rational approximation to the successive powers of a real number. {\em Indag. Math.} (N.S.) 14 (2003), no. 1, 45--53.
%
%\bibitem{roy_2008} D. Roy. On simultaneous rational approximations to a real number, its square, and its cube. {\em Acta Arith.} 133 (2008), no. 2, 185--197.

\bibitem{baker_1964} A. Baker, {\em Rational approximations to certain algebraic
numbers,} Proc. LMS {\bf 14} (1964), No 3, 385--398.

\bibitem{bennett_1997} M. A. Bennett, {\em Effective Measures of Irrationality for Certain Algebraic
Numbers,} J. AustMS {\bf 62} (1997), 329--344.

\bibitem{bgp_2016} V. Beresnevich, F. Ram\'irez, S. Velani, {\em Metric Diophantin
    Approximation: Aspects of Recent Work,} In: ``Dynamics and Analytic Number theorey'', D. Badziahin, A. Gorodnik, N.
    Peyerimhoff (eds.), LMS Lecture Notes Series, 2016, 1--95.

\bibitem{bombieri_1982} E. Bombieri, {\em On the Thue-Siegel-Dyson
theorem,} Acta Math. {\bf 148} (1982), 255--296.

\bibitem{bpv_1996} E. Bombieri, A.J. van der Poorten, J. D. Vaaler, {\em Effective
measures of irrationality for cubic extensions of number fields,}
Ann. Scuola Norm. Sup. Pisa Cl. Sci. (4) {\bf 23} (1996), No 2,
211--248.

\bibitem{bugeaud_2018} Y. Bugeaud, ``Linear forms in logarithms and
    applications'', European Mathematical Society, 2018.

\bibitem{chudnovsky_1983} G. V. Chudnovsky, {\em On the method of
Thue-Siegel,} Ann. of Math., II ser. {\bf 117} (1983), 325--382.

\bibitem{dyson_1947} F. Dyson, {\em The approximation to algebraic
numbers by rationals,} Acta Math. {\bf 79} (1947), 225--240.

\bibitem{feldman_1968} N. I. Feldman, {\em Improved estimate for a linear form of
    the logarithms of algebraic numbers.} Mat. Sb. {\bf 77} (1968), 256--270 (in Russian).
    English translation in Math. USSR. Sb. {\bf 6} (1968), 393--406.

\bibitem{jones_thron} W. B. Jones, W. J. Thron, ``Continued fractions:
Analytic theory and Applications'', Encyclopedia of Mathematics and
Applications 11, Addison-Wesley, 1980, p. 92.

\bibitem{lang_1991} S. Lang, ``Number Theory III'', Encyclopedia of
Mathematical Sciences 60, Springer-Verlag, New York, 1991, p. 214.

\bibitem{lpv_1999} G. Lettl, A. Peth\"o, P. Voutier, {\em Simple families
of thue inequalities.} Trans. of AMS {\bf 351} (1999), No 5,
1871--1894.

\bibitem{osgood_73} C. F. Osgood, {\em An Effective Lower Bound on the Diophantine
Approximation of Algebraic Functions by Rational Functions,}
Mathematika {\bf 20} (1973), 4--15.

\bibitem{poorten_1998} A.~J.~van~der~Poorten, {\em Formal Power Series and their
Continued Fraction Expansion}, In: ``Algorithmic Number Theory'',
Lecture Notes in Computer Science 1423, 1998, 358--371.


\bibitem{ros_sch_62} J. B. Rosser, L. Schoenfeld, {\em Approximate formulas for
    some functions of prime numbers,} Illinois J. Math. {\bf 6} (1962),
    64--94.

\bibitem{roth_1955} K. F. Roth, {\em Rational approximations to algebraic
numbers,} Mathematika {\bf 2} (1955), 337--360.

\bibitem{voutier_2007} P. M. Voutier, {\em Rational approximations to
$\sqrt[3]{2}$ and other algebraic numbers revisited,} J. de
Th\'eorie des Nombres de Bordeaux {\bf 19} (2007), No 1, 263--288.

\bibitem{voutier_2010} P. M. Voutier, {\em Thue's Fundamentaltheorem, I: The general
case,} Acta Arith. {\bf 143} (2010), No 2, 101--144.

\bibitem{wakabayashi_2002} I. Wakabayashi, {\em Cubic Thue inequalities with negative
discriminant,} J. Number Theory, {\bf 97} (2002), No 2, 225--251.

\end{thebibliography}
\end{document}